\newcommand{\indicator}[1]{\mathbbm{1}_{\left[ {#1} \right] }}
\newcommand{\E}{\mathbb E}
\newcommand{\eps}{\epsilon}
\newcommand{\tr}{\top}
\newtheorem{condition}{Condition}[section]
\DeclareMathOperator*{\argmax}{arg\,max}
\newcommand{\LF}{\mathcal{L}^{F}}
\newcommand{\LS}{\mathcal{L}^{S}}
\date{\today}
\begin{document}
\bibliographystyle{apalike}

\title{Dimension Reduction in Statistical Estimation of Partially Observed Multiscale Processes}
\author{Andrew Papanicolaou\footnotemark[2]\and Konstantinos Spiliopoulos\footnotemark[3]}
\footnotetext[2]{Department of Finance and Risk Engineering, NYU Tandon School of Engineering, 6 MetroTech Center, Brooklyn NY 11201 {\em ap1345@nyu.edu}.}
\footnotetext[3]{Department of Mathematics \& Statistics, Boston University, 111 Cummington Street, Boston MA 02215, \em{kspiliop@math.bu.edu}.
Work partially supported  by NSF grant DMS-1312124 and by NSF CAREER award-DMS 1550918 .}

\date{\today}

\maketitle
\begin{abstract}
We consider partially observed multiscale diffusion models that are specified up to an unknown vector parameter. We establish for a very general class of test functions that the filter of the original model converges to a filter of reduced dimension. Then, this result is used to justify statistical estimation for the unknown parameters of interest based on the model of reduced dimension but using the original available data. This allows to learn the unknown parameters of interest while working in lower dimensions, as opposed to working with the original high dimensional system. Simulation studies support and illustrate the theoretical results.
\end{abstract}

{\bf Keywords. } data assimilation, filtering, parameter estimation, homogenization, multiscale diffusions, dimension reduction. \\
{\bf Subject classifications. } 93E10 93E11 93C70 62M07 62M86

\date{}
\section{Introduction}

This paper considers statistical inference for a general filtering problem with multiple time scales. On a probability space $(\Omega,(\mathcal F_t)_{t\leq T},\mathbb P)$ with $T<\infty$, for positive integers $m,k,n$ we consider the $(m+k+n)$-dimensional process $(X^{\delta},U^{\delta}, Y^{\delta})=\{(X^{\delta}_{t},U^{\delta}_{t},Y^{\delta}_{t})\in\mathbb R^m\times\mathbb R^k\times\mathbb R^n, 0\leq t\leq T\}\in C([0,T];\mathbb R^m\times\mathbb R^k\times\mathbb R^n)$, which satisfies a system of stochastic differential equations (SDEs) with $0<\delta\ll 1$,
\begin{eqnarray}
dY^{\delta}_{t}&=&h_{\theta}\left(X^{\delta}_{t},U^{\delta}_{t}\right)dt +  dW_{t}\nonumber\hspace{4.4cm}\hbox{(observed)}\\
dU^{\delta}_{t}&=& g_{\theta}\left(X^{\delta}_{t},U^{\delta}_{t}\right)dt + \tau_{\theta}\left(X^{\delta}_{t},U^{\delta}_{t}\right)dV_{t}\nonumber\hspace{2.9cm}\hbox{(hidden)}\\
dX^{\delta}_{t}&=& \frac{1}{\delta}b_{\theta}\left(X^{\delta}_{t},U^{\delta}_{t}\right)dt + \frac{1}{\sqrt{\delta}}\sigma_{\theta}\left(X^{\delta}_{t},U^{\delta}_{t}\right)dB_{t}\label{Eq:Model3}\hspace{2cm}\hbox{(hidden)}\ ,
\end{eqnarray}
where $(W_t)_{t\leq T}$, $(B_t)_{t\leq T}$ and $(V_t)_{t\leq T}$ are (unobserved) independent Wiener processes in $\mathbb{R}^{n}$, $\mathbb R^m$ and $\mathbb R^k$, respectively.

One possible interpretation in the context of financial applications, is that the vector $Y_t$ is part of a continuous stream of financial data, with the hidden processes $X_t^\delta$ and $U_t^\delta$ being factors in a stochastic model, see for example \cite{FouqueEtall2011}. A different possible interpretation is that $Y^{\delta}_{t}$ is the signal of brain activity during a seizure, as in \cite{Jisra2014} where  multiscale dynamical systems are exploited as models for seizure dynamics. Other applications include multiscale modeling in oceanography and climate modeling, see for example \cite{Majda2008}.

Initially, the model in \eqref{Eq:Model3} is left unspecified, and it is with the arrival of data that we can learn the parameter value $\theta$. We let $\mathcal Y_t^\delta$ denote the $\sigma$-algebra generated by the data, $\mathcal Y_t^\delta=\sigma \{(Y_s^\delta)_{s\leq t}\}$, and a primary goal is to compute the maximum likelihood estimator (MLE). Given $\mathcal Y_t^\delta$, we denote the log-likelihood function as $\rho_t^\delta(\theta)$, which we maximize to obtain the estimator
\[\hat\theta_t^\delta = \arg\max_\theta \rho_t^\delta(\theta)\ .\]

In financial applications for example and in particular in  high-frequency trading, market and limit orders need to be executed at exact moments in time with accuracy that is of the order of nano seconds. Hence, an estimate such as the MLE may be very accurate, but it may also require computing time that is too long for the purposes of trading. In general, it is well known that standard Monte-Carlo particle filters can be quite slow even without the effect of multiple scales.  However, in the small-$\delta$ limit there is a significant reduction in dimension because the process $X^\delta$ gets `averaged out' by the ergodic theory, and this allows filtering for $U_t^\delta$ and inference for $\theta$ to have a faster algorithm.

The novel contribution of this paper is at two levels. At the first level, we prove that the nonlinear filter for (\ref{Eq:Model3}) converges, for any parameter value of $\theta$ and under the measure parameterized by the true parameter value, with the limit being a filter of reduced dimension. We prove this result for test functions that can depend on both the slow and fast unknown components, $X$ and $U$; the test functions may be unbounded but we do impose moments bounds. This result extends previous works of \cite{ImkellerSriPerkowskiYeong2012} and \cite{PapanicolaouSpiliopoulos2014}. At the second level, we use the filters convergence result to prove that the MLE based on the model of reduced dimension produces both consistent and asymptotically normal estimators, and we identify the limiting variance of the estimator. We apply our methodology to examples where the reduction procedure allows to use classical Kalman-filter type of methods for a problem that is highly nonlinear without the small-$\delta$ asymptotics. Our examples demonstrate how statistical inference for a highly nonlinear problem is reduced to statistical inference for a problem that is both linear and of reduced dimension, and hence is significantly less demanding from a computational standpoint. Computations of filters in problems with nonlinearity in the reduced system are considerably more complicated to implement and usually require particle filers; we refer the reader to \cite{GivonStinisWeare2009,Papavasiliou2007} for some related results in the multiscale setting. Another implementation issue is model learning/estimation, which is sometimes done with an expectation maximization (EM) algorithm (see \cite{elliott93}). The results in this paper are directly applicable to learning of parametric models if the multiscale framework applies; the model is parameterized by $\theta$ and an MLE can be obtained using the reduced model as opposed to the original model.

The rest of the paper is organized as follows. In Section \ref{S:ProblemFormulation} we formulate the problem of interest in specific terms and present some preliminary well-known results that are useful throughout.
In Section \ref{S:FilteringEquations} we present the filtering equations of the original problem and we derive the filters associated to the problem of reduced dimensions. In Section \ref{S:FilterConvergence} we present our first main result on filter convergence which is the main justification for doing inference using the original available data $\mathcal Y_t^\delta$ but based on the model of reduced dimension. In Section \ref{S:MLE_reducedEstimator} we present our main results on parameter estimation. In particular, we prove that under appropriate conditions, the MLE for the problem of reduced dimension is both consistent and asymptotically normal as $T\rightarrow\infty$. In Section \ref{S:Example} we consider a few examples to illustrate and supplement the theoretical results for which classical Kalman filter techniques can be used. Simulation data are presented to support the theoretical claims.

\section{Problem formulation and preliminary results}\label{S:ProblemFormulation}

Let $\theta\in\Theta\subset \mathbb{R}^{d}$ be an unknown parameter of interest which is to be estimated from data generated by (\ref{Eq:Model3}). Assuming that we observe only the $Y^{\delta}$ process, we develop a consistent and asymptotically normal MLE for $\theta$. We consider the case $\delta\ll 1$, in which case $X^{\delta}$ is the fast component and $U^{\delta}$ is the slow component. We reduce the high dimensionality of the problem by looking at the $\delta\downarrow 0$ limit and exploiting the statistical properties of the MLE based on the reduced likelihood.

For the model given in \eqref{Eq:Model3}, we shall write $\LF$ and $\LS$ for the infinitesimal generators of the fast process, $X^{\delta}$, and the process $U^{\delta}$, respectively. Namely,
\begin{align}
\LF_{\theta,u} f(x)&=b_\theta(x,u)\cdot D_{x}f(x)+\frac{1}{2}\textrm{tr}\left[\sigma_\theta(x,u)\sigma_\theta^{\tr}(x,u)D^{2}_{x}f(x) \right]\nonumber\\
\LS_{\theta,x} f(u)&=g_\theta(x,u)\cdot D_{u}f(u)+\frac{1}{2}\textrm{tr}\left[\tau_\theta(x,u)\tau_\theta^{\tr}(x,u)D^{2}_{u}f(u) \right]
\end{align}

Next we pose the main condition of this paper on the growth and regularity  of the coefficients of (\ref{Eq:Model3}). Such conditions guarantee
that (\ref{Eq:Model3}) has a well defined strong solution,
that the fast component $X_t^\delta$ is ergodic and that the slow component $Y_t^\delta$ has a well-defined homogenization limit as $\delta\downarrow 0$ in
the appropriate sense. Assumptions to guarantee these properties are given by \cite{PardouxVeretennikov1} for homogenization and Chapter 3 of \cite{bainCrisan} for filtering, and are stated as follows:

\begin{condition}\label{A:Assumption1}
\begin{enumerate}
\item{For ergodicity purposes we shall assume the recurrence condition
\[
\lim_{|x|\rightarrow\infty}\sup_{\theta\in\Theta}\sup_{u\in\mathbb{R}^{k}} b_{\theta}(x,u)\cdot x=-\infty.
\]

Under this assumption the Lyapunov-type condition for existence of an invariant measure associated to the fast dynamics $X$ of Hasminskii \cite{Hasminskii} is satisfied.
}
\item{To guarantee uniqueness of the  invariant measure for $X$, we assume that $\sigma_{\theta}(x,u)\sigma_{\theta}^{T}(x,u)$
is  non-degenerate uniformly in $\theta$, i.e., there exists $\lambda(\theta)>0$ such that for all $(x,u)\in\mathcal{X}\times\mathcal{U}$
\[
\left|\xi\sigma_{\theta}(x,u)\right|^{2}\geq \lambda(\theta)|\xi|^{2},\quad\textrm{ for all }(\theta,\xi)\in\Theta\times\mathbb{R}^{m}.
\]}
\item{
The functions $b_{\theta}(x,u)$ and  $\sigma_{\theta}(x,u)$  are  $C^{2+\alpha,2}_{b}(\mathbb{R}^{m}\times\mathbb{R}^{k})$ with $\alpha\in(0,1)$, uniformly
in $\theta\in\Theta$. Namely, uniformly
in $\theta\in\Theta$, they have two bounded derivatives in $x$ and $u$, with all partial derivatives being H\"{o}lder continuous,
with exponent $\alpha$, with respect to $x$, uniformly in $u$.}
\item{  We assume that $g_{\theta}\in C(\mathbb{R}^{m}\times \mathbb{R}^{k})$ and that there exist $K$ and $q$ such that
    \[
    |g(x,u)|\leq K(1+|x|)\left(1+|u|^{q}\right).
    \]}
\item{ For every $N>0$ there exists a constant $C(N)$ such that for all $u_{1},u_{2}\in\mathbb{R}^{m}$ and $|x|\leq N$, the diffusion matrix $\tau$ satisfies
\[
\left|\tau(x,u_{1})-\tau(x,u_{2})\right|\leq C(N)|u_{1}-u_{2}|.
\]
Moreover, there exists $K>0$ and  $q>0$ such that
\[
|\tau(x,u)|\leq K (1+|u|^{1/2})(1+|x|^{q}).
\]
}
\item{$h_{\theta}\in C(\mathbb{R}^{m}\times\mathbb{R}^{k})$ is bounded and globally Lipschitz in $(x,u)\in \mathbb{R}^{m}\times\mathbb{R}^{k}$
uniformly in $\theta\in\Theta$.} 
\item{The functions $h_{\theta},b_{\theta},\sigma_{\theta}, g_{\theta},\tau_{\theta}$ are Lipschitz continuous in $\theta\in\Theta$ and $\Theta \subset\mathbb{R}^{d}$ is bounded open set.}
\end{enumerate}
\end{condition}

Under this assumption and non-degeneracy of the diffusion coefficient $\sigma_\theta(x,u)\sigma_\theta^\tr(x,u)$, for fixed with $U_{t}=u$ the process $X^{1}$ has a unique invariant measure which we shall denote by $\mu_{\theta}(dx;u)$. For a given function $f\in\mathcal{L}^{2}(\mu_{\theta})$, define its averaged version as
\[
\bar{f}_{\theta}(u)=\int_{\mathbb{R}^{m}}f(x,u)\mu_{\theta}(dx;u)
\]
It is a well known result that $Y^{\delta}_{\cdot}$ converges in distribution in $C([0,T];\mathbb{R}^n)$ to the process $\overline{Y}_{\cdot}$
 (e.g. \cite{BLP,PardouxVeretennikov2}), where
\begin{equation}
\overline{Y}_{t}=\int_{0}^{t}\bar{h}_{\theta}(\bar{U}_{s}) ds+  W_{t}.\label{Eq:LimitingModel}
\end{equation}
and where
\begin{equation}
d\bar{U}_{t}= \bar{g}_{\theta}\left(\bar{U}_{t}\right)dt + \bar{\tau}_{0,\theta}\left(\bar{U}_{t}\right)dV_{t}, \quad \text{ with }\bar{\tau}_{0,\theta}(u)=\left(\overline{\tau_{\theta}(\cdot,u)\tau_{\theta}^\tr(\cdot,u)}\right)^{1/2}\ .
\end{equation}
Actually, due to the fact that the observation process $Y^{\delta}_{t}$ has constant diffusion, Condition \ref{A:Assumption1} and the ergodic theorem guarantee that a stronger result holds for any $\theta\in\Theta$, i.e., for every $\varepsilon>0$
\begin{equation}
\mathbb{P}_\theta\left(\sup_{0\leq t\leq T}\left|Y^{\delta}_{t}-\overline{Y}_{t}\right|\geq \varepsilon \right)\rightarrow 0, \textrm{ as }\delta\downarrow 0\qquad\forall\theta\in\Theta.\label{Eq:MeanSquareConvergence}
\end{equation}


\section{Filtering equations}\label{S:FilteringEquations}
 The data is contained in the filtration
\[\mathcal Y_t^\delta \doteq\mathcal F_t^{Y^\delta} = \sigma \{(Y_s^\delta)_{s\leq t}\}\ ,\]
which is right continuous and $\mathcal Y_0^\delta$ contains all $\mathbb{P}-$negligible sets. For any $\theta\in\Theta$, we define  a new measure $\mathbb{P}_{\theta}^{*}$ on $(\Omega,\mathcal{F})$ via the relationship
\begin{equation}
\frac{d\mathbb{P}_{\theta}}{d\mathbb{P}^{*}_{\theta}}\doteq Z_T^{\delta,\theta}=
\exp\left( \int_{0}^{T}h_{\theta}(X^{\delta}_{s},U^{\delta}_{s})dY^{\delta}_{s}-\frac{1}{2}\int_{0}^{T}\left|h_{\theta}(X^{\delta}_{s},U^{\delta}_{s})\right|^{2}ds   \right)\ .\label{Eq:LogLikelihoodPreLimit}
\end{equation}
Under the proper assumptions $Z_T^{\delta,\theta}$ is an exponential martingale and thus
the probability measures $\mathbb{P}_{\theta}$ and $\mathbb{P}^{*}_{\theta}$ are absolutely continuous with respect to each other,
and the distribution of $(U^{\delta},X^{\delta})$ is the same under both $\mathbb{P}_{\theta}$ and $\mathbb{P}_{\theta}^{*}$.
Furthermore,  the process $Y^\delta$ is a $\mathbb{P}_{\theta}^*$-Brownian motion and is independent of $(U^{\delta},X^{\delta})$.

Next, for  $f:\mathcal X\times\mathcal U\rightarrow\mathbb R$ such that $\mathbb{E}_{\theta}^{*}\left[ f(X^{\delta}_{t},U^{\delta}_{t})\right]<\infty$, we define the measure valued process $\phi^{\delta,\theta}_{t}$ acting on $f$ as
\begin{equation}\label{eq:expFormulation}
\phi^{\delta,\theta}_{t}[f] \doteq\mathbb{E}_{\theta}^{*}\left[ Z_t^{\delta,\theta}f(X^{\delta}_{t},U^{\delta}_{t}) \Big |\mathcal{Y}_{t}^\delta\right]\ ,
\end{equation}
a process which, for $f\in C^{2}_{c}(\mathcal{X}\times\mathcal{U})$ is well-known to be the unique solution (see \cite{rozovsky1992}) to the following equation:
\begin{eqnarray}
d\phi^{\delta,\theta}_{t}[f]&=&\left(\frac{1}{\delta}\phi^{\delta,\theta}_{t}[\LF_{\theta}f] +\phi^{\delta,\theta}_{t}[\LS_{\theta}f]   \right)dt
+ \phi^{\delta,\theta}_{t}[h_{\theta} f] dY_s^\delta,\quad \mathbb{P}_{\theta}^{*} \textrm{-a.s. }\ ,\nonumber\\
   \phi_0^{\theta}[f]&=& \mathbb E_{\theta}f(X_0^\delta,U_0^\delta)\ .\label{Eq:Zakai}
 \end{eqnarray}
Equation \eqref{Eq:Zakai} is the Zakai equation for nonlinear filtering.
Furthermore, $\phi_t^{\delta,\theta}$ is actually an unnormalized probability measure which yields the normalized posterior expectations via the Kalianpour-Striebel
formula,
\begin{equation}
\label{Eq:KSformula}
\pi^{\delta,\theta}_{t}[f]\doteq\E_{\theta}\left[f(X_t^\delta,U_t^\delta)\Big|\mathcal Y_t^\delta\right]=\frac{\phi_t^{\delta,\theta}[f]}{\phi_t^{\delta,\theta}[1]}\quad \mathbb{P}_{\theta},\mathbb{P}_{\theta}^{*} \textrm{-a.s.}\ .
\end{equation}
If $f(x,u)=h_{\theta}(x,u)$ then we have the innovations process,
\[\nu_t^{\delta,\theta}\doteq Y_t^\delta-y_0 - \int_0^t\pi^{\delta,\theta}_{s}[h]ds\qquad\forall t\in[0,T]\ .\]
The process $\nu_t^{\delta,\theta}$ is a $\mathbb P_{\theta}$-Brownian motion under the filtration generated by the observed $Y^{\delta}$ process, but will only be observable as Brownian motion if $\theta$ is equal to the true parameter value. For a suitable test function $f:\mathcal X\times \mathcal{U}\rightarrow \mathbb R$, the innovation process is used in the nonlinear Kushner-Stratonovich equation to describe the evolution of $\pi_t^{\delta,\theta}[f]$,
\begin{equation}
\label{Eq:Kushner}
d\pi_t^{\delta,\theta}[f] = \left(\frac 1\delta \pi^{\delta,\theta}_{t}[\LF_{\theta} f]+\pi^{\delta,\theta}_{t}[\LS_{\theta} f] \right)dt+ \left(\pi^{\delta,\theta}_{t}[f h_{\theta}]-\pi^{\delta,\theta}_{t}[f]\pi^{\delta,\theta}_{t}[h_{\theta}]\right)d\nu_t^{\delta,\theta}\quad \mathbb{P}_{\theta}\textrm{-a.s.} \end{equation}

Let us next consider the filtering equations of the approximating problem that is of reduced dimension. Consider the `averaged' exponentials
\begin{align}
 \bar{Z}_t^{\delta,\theta}&\doteq \exp\left( \int_{0}^{t}\bar{h}_{\theta}(\bar{U}_{s}) dY_s^\delta-\frac{1}{2}\int_{0}^{t}\left|\bar{h}_{\theta}(\bar{U}_{s})\right|^{2}ds   \right)\ , \label{Eq:DefinitionsZ}\\
 \bar{Z}_t^{\theta}&\doteq \exp\left( \int_{0}^{t}\bar{h}_{\theta}(\bar{U}_{s}) d\bar{Y}_s-\frac{1}{2}\int_{0}^{t}\left|\bar{h}_{\theta}(\bar{U}_{s})\right|^{2}ds   \right)\ .
\end{align}
For $f\in\mathcal{C}^{2}_{c}(\mathcal{U})$, we define new posterior measures $\bar{\phi}^{\delta,\theta}_{t}[f]$ and $\bar{\phi}^{\theta}_{t}[f]$ which satisfy the stochastic evolution equations
\begin{eqnarray}
\label{Eq:avgZakaiDelta}
d\bar{\phi}^{\delta,\theta}_{t}[f]&=&\bar{\phi}^{\delta,\theta}_{t}[\overline{L}_{\theta}^{S}f]dt+\bar{\phi}^{\delta,\theta}_t[\bar{h}_{\theta} f]  dY^{\delta}_t, \quad \bar{\phi}^{\delta,\theta}_{0}[f]=\mathbb{E}_{\theta}f(\bar{U}_{0})\ ,\\
\label{Eq:avgZakaiLimit}
d\bar{\phi}^{\theta}_{t}[f]&=&\bar{\phi}^{\theta}_{t}[\overline{L}_{\theta}^{S}f]dt+\bar{\phi}^{\theta}_t[\bar{h}_{\theta}f]  d\bar{Y}_t, \quad \bar{\phi}^{\theta}_{0}[f]=\mathbb{E}_{\theta}f(\bar{U}_{0})\ .
 \end{eqnarray}
It is straightforward to verify with It\^o's lemma that the `average' Zakai equations  \eqref{Eq:avgZakaiDelta} and \eqref{Eq:avgZakaiLimit} have solutions
\begin{eqnarray}
\label{Eq:solAvgZakai}
\bar{\phi}^{\delta,\theta}_{t}[f]&=& \mathbb E_{\theta}^*\left[ f_{\theta}(\bar{U}_{t}) \bar{Z}_t^{\delta,\theta}\Big|\mathcal Y_t^\delta\right] \ ,\\
\label{Eq:solAvgZakaiLimit}
\bar{\phi}^{\theta}_{t}[f]&=&  \mathbb E_{\theta}^*\left[ f_{\theta}(\bar{U}_{t}) \bar{Z}_t^{\theta}\Big|\mathcal{\bar{Y}}_t\right],\
\end{eqnarray}
where $\mathcal{\bar{Y}}_t=\mathcal F_t^{\bar{Y}} = \sigma \{(\bar{Y}_s)_{s\leq t}\}$ is a right continuous $\sigma-$algebra and $\mathcal{\bar{Y}}_0$ contains all $\mathbb{P}-$negligible sets. We  define the averaged filters as follows:
$\bar{\pi}^{\delta,\theta}_{t}[f]=\frac{\bar{\phi}^{\delta,\theta}_{t}[f]}{\bar{\phi}^{\delta,\theta}_{t}[1]} $
and $\bar{\pi}^{\theta}_{t}[f]=\frac{\bar{\phi}^{\theta}_{t}[f]}{\bar{\phi}^{\theta}_{t}[1]}$. It is easy to see that $\bar{\pi}^{\theta}_{t}[f]=\E_{\theta}\left[f(\bar{U}_t)\Big|\mathcal{\bar{Y}}_t\right]$.

We conclude this section by mentioning that, under $\mathbb{P}_{\theta}$, equation (\ref{Eq:KSformula}) defines a measure-valued process $P^{\delta,\theta}_{\cdot}$, the conditional distribution, by the formula
\[
\left(P^{\delta,\theta}_{t},f\right)=\pi^{\delta,\theta}_{t}[f]=\E_{\theta}\left[f(X^{\delta}_t,U^{\delta}_t)\Big|\mathcal{Y}^{\delta}_t\right]\ .
\]
Similarly, we define the probability measure-valued processes $\bar{P}^{\delta,\theta}_{\cdot}$ and $\bar{P}^{\theta}_{\cdot}$ by
\[
\left(\bar{P}^{\delta,\theta}_{t},f\right)=\bar{\pi}^{\delta,\theta}_{t}[f]\quad\text{ and }\quad \left(\bar{P}^{\theta}_{t},f\right)=\bar{\pi}^{\theta}_{t}[f]
\]
The measure-valued process $\bar{P}^{\theta}$ especially, will become handy in proving the consistency and asymptotic normality of the MLE based on the reduced estimator.
\section{Convergence of the filters}\label{S:FilterConvergence}
Consider $\eta>0$ and define the following class of test functions
\begin{equation*}
\mathcal{A}_{\eta}^{\theta}=\left\{f\in C(\mathcal X\times\mathcal{U})\cap L^{2}(\mathcal{X}\times\mathcal{U};\mu_{\theta}):
\sup_{\delta\in(0,1)}\sup_{t\in[0,T]}\mathbb{E}_{\theta}\left|f(X^{\delta}_{t},U^{\delta}_{t})\right|^{2+\eta}<\infty \right\}.
\end{equation*}
Then, we have the following result which is a generalization of the results of \cite{ImkellerSriPerkowskiYeong2012} and \cite{PapanicolaouSpiliopoulos2014}.
\begin{theorem}\label{T:FilterConvergence1}
Assume Conditions \ref{A:Assumption1}. For any $\alpha,\theta\in\Theta$, the following hold uniformly in $t\in[0,T]$
\begin{enumerate}
\item{Let $f\in C_{b}(\mathcal{X}\times\mathcal{U})$. Then,  for every $\varepsilon>0$
\[
\lim_{\delta\downarrow 0}\mathbb{P}_{\alpha}\left( \left|\phi_t^{\delta,\theta}[f]-\bar\phi_t^{\delta,\theta}[\bar{f}]\right| \geq \varepsilon \right)=0
\]
}
\item{Assume that there is $\eta>0$ such that $f\in\mathcal{A}_{\eta}^{\theta}$. Then, $\pi_t^{\delta,\theta}[f]$ converges in $\mathbb{P}_{\alpha}$-mean-square to $\bar \pi_t^{\theta,\alpha}[f]$, i.e.,
\[\lim_{\delta\downarrow 0}\E_{\alpha}\left(\pi_t^{\delta,\theta}[f]-\bar \pi_t^{\delta, \theta}[\bar{f}]\right)^2=0\ . \]
Additionally, we also have
\[\lim_{\delta\downarrow 0}\E_{\alpha}\left(\bar\pi_t^{\delta,\theta}[\bar{f}]-\bar \pi_t^{\theta}[\bar{f}]\right)^2=0\ .\]}
\end{enumerate}
\end{theorem}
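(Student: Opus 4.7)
My plan is to exploit the reference-measure representation of both filters and do all computations under $\mathbb{P}_\theta^*$, where $Y^\delta$ is a standard Brownian motion independent of $(X^\delta,U^\delta)$. Convergence results under $\mathbb{P}_\theta^*$ will then be transferred to $\mathbb{P}_\alpha$ via $d\mathbb{P}_\alpha/d\mathbb{P}_\theta^* = Z_T^{\delta,\alpha}$ and Hölder's inequality; since $h_\alpha$ is uniformly bounded (Condition \ref{A:Assumption1}(vi)), the density $Z_T^{\delta,\alpha}$ has $L^p(\mathbb{P}_\theta^*)$ norms uniformly bounded in $\delta$ for any $p<\infty$.

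\textbf{Part 1.} By the Kallianpour--Striebel representation and the conditional Jensen inequality, it suffices to show
\[
\mathbb{E}_\theta^*\!\left| Z_t^{\delta,\theta} f(X_t^\delta,U_t^\delta) - \bar Z_t^{\delta,\theta} \bar f(\bar U_t) \right| \longrightarrow 0.
\]
I would decompose this into two pieces: $(Z_t^{\delta,\theta}-\bar Z_t^{\delta,\theta})f(X_t^\delta,U_t^\delta)$ and $\bar Z_t^{\delta,\theta}(f(X_t^\delta,U_t^\delta)-\bar f(\bar U_t))$. For the first, boundedness of $f$ and uniform $L^p(\mathbb{P}_\theta^*)$ control of both exponentials reduces the task to the convergences $\int_0^t h_\theta(X_s^\delta,U_s^\delta)\,dY_s^\delta - \int_0^t \bar h_\theta(\bar U_s)\,dY_s^\delta \to 0$ and $\int_0^t|h_\theta(X_s^\delta,U_s^\delta)|^2\,ds -\int_0^t|\bar h_\theta(\bar U_s)|^2\,ds \to 0$. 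Under $\mathbb{P}_\theta^*$, conditioning on the hidden filtration and applying the Itô isometry reduces the first quantity to $\mathbb{E}_\theta^* \int_0^t |h_\theta(X_s^\delta,U_s^\delta) - \bar h_\theta(\bar U_s)|^2\,ds$, which vanishes by the Pardoux--Veretennikov averaging theory for the fast component together with $U^\delta\to\bar U$; the quadratic variation term is handled analogously. For the second piece, the pointwise-in-time averaging $f(X_t^\delta,U_t^\delta)\to\bar f(U_t^\delta)$ is obtained from a corrector argument: solve the Poisson equation $\LF_{\theta,u}\chi(x,u)=f(x,u)-\bar f(u)$ and apply It\^o's formula to $\delta\,\chi(X_t^\delta, U_t^\delta)$ to reduce the difference to martingale-type remainders of order $\sqrt\delta$, after which slow-component homogenization yields $\bar f(U_t^\delta)\to\bar f(\bar U_t)$.

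\textbf{Part 2.} For the first limit, I would split
\[
\pi_t^{\delta,\theta}[f] - \bar\pi_t^{\delta,\theta}[\bar f] = \frac{\phi_t^{\delta,\theta}[f]-\bar\phi_t^{\delta,\theta}[\bar f]}{\phi_t^{\delta,\theta}[1]} + \bar\pi_t^{\delta,\theta}[\bar f]\,\frac{\bar\phi_t^{\delta,\theta}[1]-\phi_t^{\delta,\theta}[1]}{\phi_t^{\delta,\theta}[1]}.
\]
Because $|h_\theta|$ is bounded, $1/\phi_t^{\delta,\theta}[1]$ has moments of every order bounded uniformly in $\delta$, so Part 1 applied to $f$ and to $\mathbf 1$ yields the bounded-$f$ case in $L^2(\mathbb{P}_\alpha)$. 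To upgrade to $f\in\mathcal{A}_\eta^\theta$, I would truncate by a smooth cutoff $f_N=f\chi_{\{|f|\leq N\}}$ and write
\[
\pi_t^{\delta,\theta}[f]-\bar\pi_t^{\delta,\theta}[\bar f] = \pi_t^{\delta,\theta}[f-f_N] + \bigl(\pi_t^{\delta,\theta}[f_N]-\bar\pi_t^{\delta,\theta}[\bar f_N]\bigr) + \bar\pi_t^{\delta,\theta}[\overline{f_N-f}];
\]
the middle term vanishes as $\delta\downarrow 0$ by the bounded case, while the endpoint terms are bounded in $L^2(\mathbb{P}_\alpha)$ by $\|f-f_N\|_{L^{2+\eta}(\mathbb{P}_\alpha)}$ via conditional Jensen, which is $o(1)$ as $N\to\infty$ uniformly in $\delta,t$ by the defining moment bound of $\mathcal{A}_\eta^\theta$. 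For the second limit, $\bar\phi^{\delta,\theta}$ and $\bar\phi^\theta$ satisfy the same parametric Zakai equation \eqref{Eq:avgZakaiDelta}--\eqref{Eq:avgZakaiLimit} driven by $Y^\delta$ and $\bar Y$ respectively; since $\sup_{s\leq T}|Y_s^\delta-\bar Y_s|\to 0$ in $\mathbb{P}_\alpha$-probability (Eq.~\eqref{Eq:MeanSquareConvergence} transferred through equivalence of $\mathbb{P}_\theta$ and $\mathbb{P}_\alpha$), a Grönwall estimate on the $L^2$-norm of the difference of the two Zakai solutions yields the claim.

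\textbf{Main obstacle.} The delicate step is the pointwise-in-time averaging of functionals of the joint hidden pair $(X_t^\delta,U_t^\delta)$ for unbounded test functions $f\in\mathcal{A}_\eta^\theta$: classical path-level homogenization results are insufficient, so the corrector/Poisson-equation approach is necessary, and its remainders must be shown to be uniformly integrable, which is exactly where the $2+\eta$ moment bound in the definition of $\mathcal{A}_\eta^\theta$ is consumed. A secondary technical point is maintaining uniform-in-$\delta$ control of the Radon--Nikodym factor $Z_T^{\delta,\alpha}$ when transferring from $\mathbb{P}_\theta^*$ to $\mathbb{P}_\alpha$, which is where the boundedness of $h$ in Condition \ref{A:Assumption1}(vi) is essential.
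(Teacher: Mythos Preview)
There is a genuine gap in your Part~1. Your conditional-Jensen reduction to
\[
\mathbb E_\theta^*\bigl|\,Z_t^{\delta,\theta} f(X_t^\delta,U_t^\delta) - \bar Z_t^{\delta,\theta}\bar f(\bar U_t)\,\bigr|\longrightarrow 0
\]
is too crude: this quantity does \emph{not} vanish. At a fixed time $t$, the random variable $f(X_t^\delta,U_t^\delta)-\bar f(U_t^\delta)$ is $O(1)$ in every $L^p$ norm, because $X_t^\delta$ fluctuates on scale one; the corrector/Poisson-equation argument you invoke controls the \emph{time integral} $\int_0^t\bigl(f(X_s^\delta,U_s^\delta)-\bar f(U_s^\delta)\bigr)\,ds$, not the instantaneous value. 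Once you move the absolute value inside the conditional expectation you throw away exactly the cancellation that makes the filter converge.

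The paper's device is to compute the $L^2(\mathbb P_\theta^*)$ norm of the conditional expectation itself via an independent-copy trick: introduce $(\tilde X^\delta,\tilde U^\delta)$ with the same law as $(X^\delta,U^\delta)$ but independent of it and of $Y^\delta$. Then
\[
\mathbb E_\theta^*\bigl(\phi_t^{\delta,\theta}[f-\bar f_\theta]\bigr)^2
=\mathbb E_\theta^*\Bigl[(f-\bar f_\theta)(X_t^\delta,U_t^\delta)\,(f-\bar f_\theta)(\tilde X_t^\delta,\tilde U_t^\delta)\,\exp\!\Bigl(\int_0^t h_\theta(X_s^\delta,U_s^\delta)\,h_\theta(\tilde X_s^\delta,\tilde U_s^\delta)\,ds\Bigr)\Bigr],
\]
obtained by integrating out the $\mathbb P_\theta^*$-Brownian motion $Y^\delta$. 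Now one can peel off an $\epsilon$-window: replace the exponential by its value on $[0,t-\epsilon]$ (an $O(\epsilon)$ error, uniformly in $\delta$), then condition on $\mathcal F_{t-\epsilon}^{X^\delta,U^\delta}$; ergodicity of the fast process over $[t-\epsilon,t]$ gives $\mathbb E\bigl[f(X_t^\delta,U_t^\delta)-\bar f_\theta(U_t^\delta)\,\big|\,X_{t-\epsilon}^\delta,U_{t-\epsilon}^\delta\bigr]\to 0$ as $\delta\downarrow 0$. This recovers the cancellation you lost. Your treatment of Part~2 (ratio decomposition, truncation for $f\in\mathcal A_\eta^\theta$, and a Gr\"onwall/continuity argument for the $Y^\delta\to\bar Y$ step) is broadly in line with the paper, but it rests on the Part~1 result, so the same repair is needed upstream.
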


For $f\in C^{4}_{b}(\mathcal{U})$ and for the case $\alpha=\theta$ this is proven in \cite{ImkellerSriPerkowskiYeong2012}. Also,  for the case $h_{\theta}(x,u)=h_{\theta}(x), b_{\theta}(x,u)=b_{\theta}(x)$ and $\sigma_{\theta}(x,u)=\sigma_{\theta}(x)$, i.e., when the model does not include the hidden slow process $U$,  Theorem \ref{T:FilterConvergence1} is proven in \cite{PapanicolaouSpiliopoulos2014}.
 Hence, Theorem \ref{T:FilterConvergence1} extends the results of \cite{ImkellerSriPerkowskiYeong2012,PapanicolaouSpiliopoulos2014} to the case $f\in\mathcal{A}_{\eta}^{\theta}$ and under parameter mismatch. We emphasize here that since we are interested in the parameter estimation, we are naturally interested in making sure that the filters converge for \textit{any} parameter value under the measure parameterized by the true parameter value. The proof of this theorem is in Appendix \ref{A:FilterConvergence1}.

\section{On statistical inference based on reduced likelihood function}\label{S:MLE_reducedEstimator}

Theorem \ref{T:FilterConvergence1} suggests that for parameter estimation, we can approximate the conditional log-likelihood
\begin{equation}
\rho_T^\delta(\theta)=\log \phi^{\delta,\theta}_{T}[1]=\log\mathbb{E}_{\theta}^{*}\left[ Z_T^{\delta,\theta} \Big |\mathcal{Y}^{\delta}_T\right]\label{Eq:LogLikelihood}
\end{equation}
by the `reduced' log-likelihood
\begin{equation}
\bar{\rho}_T^\delta(\theta)=\log \bar{\phi}^{\delta,\theta}_{T}[1]=\log \mathbb{E}_{\theta}^{*}\left[ \bar{Z}_T^{\delta,\theta} \Big |\mathcal{Y}^{\delta}_T\right]\label{Eq:IntermediateLikelihood}
\end{equation}

Note that by Lemma 3.29 in \cite{bainCrisan} we have
\begin{equation}
\bar{\rho}_T^\delta(\theta)=\log\left( \bar{\phi}_T^{\delta,\theta}[1]\right)
= \int_0^T \bar{\pi}_s^{\delta,\theta}[\bar{h}_{\theta}]dY^{\delta}_s -\frac 12\int_0^T\left| \bar{\pi}_s^{\delta,\theta}[\bar{h}_{\theta}]\right|^2ds\
\end{equation}

The following condition is about regularity  of $\bar{h}_{\theta}(u)$ as a function of $\theta\in\Theta$.
\begin{condition}\label{A:Assumption4}
There are constants $C>0$,  $p\geq 1$ and $q>1$, such that for any $\theta_{1},\theta_{2}\in\Theta$,
\[
\sup_{u\in\mathcal{U}}|\bar{h}_{\theta_{1}}(u)-\bar{h}_{\theta_{2}}(u)|^{p}\leq C|\theta_{1}-\theta_{2}|^{q}.
\]
i.e., $\theta\mapsto \bar{h}_{\theta}(u)$ is H\"{o}lder continuous uniformly in $u\in\mathcal{U}$.
\end{condition}

Allowing arbitrary initial conditions, the set  of processes $\left(X^{\delta},U^{\delta}, P^{\delta,\theta}, \bar{P}^{\delta,\theta}\right)$,  $\left(X^{\delta},U^{\delta},  \bar{P}^{\delta,\theta}\right)$ and
$\left(\bar{U}, \bar{P}^{\theta}\right)$ are Markov-Feller processes in $C([0,\infty);\mathbb{R}^{m+k})\times C^2([0,\infty),\mathcal{P}(\mathbb{R}^{k}))$, $C([0,\infty);\mathbb{R}^{m+k})\times C([0,\infty),\mathcal{P}(\mathbb{R}^{k}))$ and $C([0,\infty);\mathbb{R}^{k})\times C([0,\infty),\mathcal{P}(\mathbb{R}^{k}))$. Let $\Pi^{\delta,\theta}(t,\cdot)$, $\bar{\Pi}^{\delta,\theta}(t,\cdot)$ and $\bar{\Pi}^{\theta}(t,\cdot)$ be the corresponding transition functions.  In order to have enough ergodicity of the averaged problem as $T\rightarrow\infty$ we make the following assumption, see for example \cite{Kushner}. The examples that will be considered in Section \ref{S:Example} satisfy Condition \ref{A:Assumption4b}.
\begin{condition}\label{A:Assumption4b}
There is a unique invariant measure $\bar{\Pi}^{\theta}_{0}(\cdot)$ for the transition function $\bar{\Pi}^{\theta}(t,\cdot)$. In addition the set $\{\bar{U}_{t}, t<\infty\}$ is tight and $\lim_{N\rightarrow\infty}\sup_{t}\bar{P}^{\theta}_{t}\left(\{|u|>N\}\right)=0$.
\end{condition}

We also need the following identifiability condition.
\begin{condition}\label{A:AssumptionIdentifiability}
We assume that for any $\theta\in\Theta$, any  $\epsilon>0$, $T<\infty$ and for every $t\in(0,T]$,  one has
\[
\inf_{\alpha\in\Theta}\inf_{|\theta-\alpha|>\epsilon}\mathbb{E}_{\alpha}\left|\bar{\pi}^{\theta}_{t}[\bar{h}_{\theta}]-\bar{\pi}^{\alpha}_{t}[\bar{h}_{\alpha}]\right|^{2}>0.
\]
\end{condition}

Let us define
\begin{equation}
\bar{\theta}^{\delta}_{T}\doteq\argmax_{\theta\in\Theta}\bar{\rho}_T^\delta(\theta).\label{Eq:ThetaMLE}
\end{equation}

Since we have assumed that $\Theta$ is bounded, we get that $\bar{\theta}^{\delta}_{T}\in\textrm{cl}(\Theta)$ with probability $1$. We then have the following theorem:
\begin{theorem}\label{T:ConsisitencyReducedLikelihood}
Assume Conditions \ref{A:Assumption1}, \ref{A:Assumption4}, \ref{A:Assumption4b} and \ref{A:AssumptionIdentifiability}. The maximum likelihood estimator based on (\ref{Eq:IntermediateLikelihood}) is strongly consistent as
first $\delta\downarrow 0$ and then $T\rightarrow\infty$, i.e., for any $\varepsilon>0$
\[
\lim_{T\rightarrow\infty}\lim_{\delta\downarrow 0}\mathbb{P}_{\alpha}\left(\left|\bar{\theta}^{\delta}_{T}-\alpha\right|>\varepsilon\right)=0.
\]
\end{theorem}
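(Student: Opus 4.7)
My strategy is the classical M-estimator argument: for each $\epsilon>0$ I aim to produce a deterministic $c(\epsilon)>0$ and show that, in the iterated limit $\delta\downarrow 0$ then $T\to\infty$,
\[
\sup_{|\theta-\alpha|\geq\epsilon}\bigl[\bar{\rho}_T^{\delta}(\theta)-\bar{\rho}_T^{\delta}(\alpha)\bigr]\;\leq\;-c(\epsilon)\,T+o_{\mathbb{P}_{\alpha}}(T),
\]
which by optimality of the MLE forces $\bar{\theta}_T^{\delta}$ inside the $\epsilon$-ball around $\alpha$ with $\mathbb{P}_{\alpha}$-probability tending to $1$. The proof naturally splits into the two limits.

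For the inner limit $\delta\downarrow 0$ at fixed $T$, Theorem \ref{T:FilterConvergence1}(ii) applied to $f=\bar{h}_{\theta}\in\mathcal{A}_{\eta}^{\theta}$ (bounded by Condition \ref{A:Assumption1}(vi)) together with the uniform convergence \eqref{Eq:MeanSquareConvergence} of $Y^{\delta}$ to $\bar{Y}$ gives, for each $\theta$,
\[
\bar{\rho}_T^{\delta}(\theta)\;\overset{\mathbb{P}_{\alpha}}{\longrightarrow}\;\bar{\rho}_T(\theta)\;:=\;\int_0^T\bar{\pi}_s^{\theta}[\bar{h}_{\theta}]\,d\bar{Y}_s-\tfrac{1}{2}\int_0^T\bigl|\bar{\pi}_s^{\theta}[\bar{h}_{\theta}]\bigr|^2ds,
\]
i.e.\ the log-likelihood of the fully averaged model. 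To promote this pointwise convergence to uniform convergence over $\theta\in\Theta$ I would combine compactness of $\Theta$ with the H\"{o}lder regularity of $\bar{h}_{\theta}$ in $\theta$ (Condition \ref{A:Assumption4}) and a perturbation estimate for the linear Zakai SPDE \eqref{Eq:avgZakaiDelta} in the parameter, bounding $|\bar{\pi}_s^{\delta,\theta_1}[\bar{h}_{\theta_1}]-\bar{\pi}_s^{\delta,\theta_2}[\bar{h}_{\theta_2}]|$ via (i) parameter dependence of $\bar{h}$ and (ii) a Gr\"onwall-type estimate on the filter SDE. A standard argmax-continuity argument then yields $\bar{\theta}_T^{\delta}\to\bar{\theta}_T:=\arg\max_{\theta}\bar{\rho}_T(\theta)$ in $\mathbb{P}_{\alpha}$-probability as $\delta\downarrow 0$.

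For the outer limit $T\to\infty$, I would introduce the $\mathbb{P}_{\alpha}$-innovations Brownian motion $\bar{\nu}_t^{\alpha}=\bar{Y}_t-\int_0^t\bar{\pi}_s^{\alpha}[\bar{h}_{\alpha}]ds$ associated with the reduced system, and after a direct algebraic manipulation obtain the clean decomposition
\[
\bar{\rho}_T(\theta)-\bar{\rho}_T(\alpha)\;=\;\int_0^T\!\bigl(\bar{\pi}_s^{\theta}[\bar{h}_{\theta}]-\bar{\pi}_s^{\alpha}[\bar{h}_{\alpha}]\bigr)d\bar{\nu}_s^{\alpha}\;-\;\tfrac{1}{2}\int_0^T\!\bigl(\bar{\pi}_s^{\theta}[\bar{h}_{\theta}]-\bar{\pi}_s^{\alpha}[\bar{h}_{\alpha}]\bigr)^2 ds.
\]
The stochastic integral is a continuous martingale with bounded integrand, so its quadratic variation grows at most linearly, and the strong law for martingales gives $M_T(\theta)/T\to 0$, promoted to uniformity in $\theta$ via Condition \ref{A:Assumption4} and compactness. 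For the finite-variation term I would apply the ergodic theorem to the enlarged Markov-Feller triple $(\bar{U}_t,\bar{P}_t^{\alpha},\bar{P}_t^{\theta})$ under $\mathbb{P}_{\alpha}$, whose tightness and uniqueness of invariant law follow from Condition \ref{A:Assumption4b} (applied jointly to $\alpha$ and $\theta$), obtaining $T^{-1}\int_0^T(\bar{\pi}_s^{\theta}[\bar{h}_{\theta}]-\bar{\pi}_s^{\alpha}[\bar{h}_{\alpha}])^2 ds\to\mathcal{I}(\theta,\alpha)$ a.s. Writing $\mathcal{I}(\theta,\alpha)$ as the stationary limit $\lim_{t\to\infty}\mathbb{E}_{\alpha}|\bar{\pi}_t^{\theta}[\bar{h}_{\theta}]-\bar{\pi}_t^{\alpha}[\bar{h}_{\alpha}]|^2$, the strict positivity on $\{|\theta-\alpha|\geq\epsilon\}$ follows from Condition \ref{A:AssumptionIdentifiability}, together with the continuity in $\theta$ provided by Condition \ref{A:Assumption4} and compactness of the annulus, giving a uniform lower bound $c(\epsilon)>0$ and completing the argument.

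The main obstacles I anticipate are the uniformity-in-$\theta$ issues at both stages. In Stage 1, I need a quantitative stability estimate for the Zakai SPDE $\bar{\phi}^{\delta,\theta}$ with respect to the parameter, relying on the Lipschitz dependence in Condition \ref{A:Assumption1}(vii) and a Gr\"onwall argument in the $L^2(\mathbb{P}_{\alpha}^{*})$ norm. In Stage 2, the more delicate point is that Condition \ref{A:AssumptionIdentifiability} is stated only pointwise in $t$, so I must justify passage to the invariant-measure limit (using the tightness in Condition \ref{A:Assumption4b} and dominated convergence, exploiting that $\bar{h}$ is bounded) and then argue that the resulting lower bound remains strictly positive uniformly in $\theta$ on the compact annulus $\{|\theta-\alpha|\geq\epsilon\}$.
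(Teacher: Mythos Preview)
Your M-estimator plan is correct and tracks the paper's argument closely: decompose the contrast into a martingale plus a negative-definite quadratic, kill the martingale by a law of large numbers, and use ergodicity of the filter pair together with the identifiability Condition~\ref{A:AssumptionIdentifiability} to bound the quadratic away from zero on $\{|\theta-\alpha|\geq\varepsilon\}$. The outer $T\to\infty$ stage is essentially identical to the paper's.

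The one substantive difference is in how you handle the inner $\delta\downarrow 0$ limit. You pass from $Y^\delta$ to $\bar Y$ via the pathwise convergence \eqref{Eq:MeanSquareConvergence} and then work with the \emph{reduced}-model innovation $\bar\nu^\alpha_t=\bar Y_t-\int_0^t\bar\pi_s^\alpha[\bar h_\alpha]\,ds$. The paper instead keeps $Y^\delta$ and uses the \emph{full}-model innovation $\nu_t=Y_t^\delta-\int_0^t\pi_s^{\delta,\alpha}[h_\alpha]\,ds$, which is a $\mathbb P_\alpha$-Brownian motion for every $\delta$; the drift then contains the full filter $\pi^{\delta,\alpha}[h_\alpha]$, and Theorem~\ref{T:FilterConvergence1} (specifically its extension to test functions $f(x,u)$ such as $h_\alpha$) is invoked to replace it by $\bar\pi_s^\alpha[\bar h_\alpha]$. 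This buys the paper uniform-in-$\delta$ martingale estimates for the stochastic integral and lets tightness in $C(\Theta)$ be read off directly from Condition~\ref{A:Assumption4} via Billingsley's Theorem~12.3, whereas your route requires a separate argument for convergence of $\int_0^T\bar\pi_s^{\delta,\theta}[\bar h_\theta]\,dY_s^\delta$ (integrand and integrator moving simultaneously). Both routes work; the paper's avoids the joint-convergence issue at the cost of using the full strength of Theorem~\ref{T:FilterConvergence1}. Your identification of the $\theta$-stability estimate for the filter as the main technical burden is accurate---the paper's inequality \eqref{Eq:RegularityReducedLikelihood} is exactly this estimate, though its derivation there is terse and implicitly relies on the same continuity of $\bar\pi^{\delta,\theta}$ in $\theta$ that you flag.
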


\begin{proof}
Under $\mathbb{P}_{\alpha}$, we recall that the innovations process
\[ \nu_t\doteq Y_t^\delta-y_0 - \int_0^t\pi^{\delta,\alpha}_{s}[h_{\alpha}]ds\qquad\forall t\in[0,T]\ \]
is a $\mathbb P_{\alpha}$-Brownian motion under the filtration generated from the observed $Y^{\delta}$ process. Hence, under $\mathbb{P}_{\alpha}$  we have
\begin{align*}
\bar{\rho}_T^\delta(\theta,\alpha)&=\bar{\rho}_T^\delta(\theta)\\
&= \int_0^T \bar{\pi}_s^{\delta,\theta}[\bar{h}_{\theta}]dY^{\delta}_s -\frac 12\int_0^t\left| \bar{\pi}_s^{\delta,\theta}[\bar{h}_{\theta}]\right|^2ds\\
&= \int_0^T \left[\bar{\pi}_s^{\delta,\theta}[\bar{h}_{\theta}]\cdot\pi^{\delta,\alpha}_{s}[h_{\alpha}]-\frac 12\left| \bar{\pi}_s^{\delta,\theta}[\bar{h}_{\theta}]\right|^2\right]ds
+\int_0^T \bar{\pi}_s^{\delta,\theta}[\bar{h}_{\theta}] d\nu_{s}\ ,
\end{align*}
where $\bar{\pi}_s^{\delta,\theta}[\bar{h}_{\theta}]\cdot\pi^{\delta,\alpha}_{s}[h_{\alpha}]$ denotes the inner product over the $Y$ coordinates in $\mathbb R^n$. Then, we have
\begin{align}
\mathbb{E}_{\alpha}\left|\bar{\rho}_T^\delta(\theta_{1},\alpha)-\bar{\rho}_T^\delta(\theta_{2},\alpha) \right|^{p}
&\leq C
\sup_{u\in\mathcal{U}}|\bar{h}_{\theta_{1}}(u)-\bar{h}_{\theta_{2}}(u)|^{p}
\left(1+\mathbb{E}_{\alpha}\int_{0}^{T}|h_{\theta}(X^{\delta}_{s},U^{\delta}_{s})|^{2}ds \right),\nonumber\\
&\leq C |\theta_{1}-\theta_{2}|^{q}\ ,\label{Eq:RegularityReducedLikelihood}
\end{align}
where we used Condition \ref{A:Assumption4}. The constant $C$ might change from line to line, but we do not indicate this in the notation. By Theorem \ref{T:FilterConvergence1}, we have
that
\begin{equation}
\lim_{\delta\rightarrow 0}\int_{0}^{T}\mathbb{E}_{\alpha} \left|\pi^{\delta,\alpha}_{s}[h_{\alpha}]-\bar{\pi}_s^{\alpha}[\bar{h}_{\alpha}]\right|^{2}ds=0\ .\label{Eq:Convergence1}
\end{equation}
Theorem \ref{T:FilterConvergence1},  (\ref{Eq:RegularityReducedLikelihood})  and (\ref{Eq:Convergence1}) imply by Theorem 12.3 in \cite{Billingsley}
that we have convergence in distribution of the process $\bar{\rho}^{\delta}_{T}(\cdot,\alpha)$ to that of $\bar{\rho}_{T}(\cdot,\alpha)$ in the uniform metric, where
\begin{align}
\bar{\rho}_T(\theta,\alpha)&=
\int_0^T \left(\bar{\pi}_s^{\theta}[\bar{h}_{\theta}]\cdot\bar{\pi}^{\alpha}_{s}[\bar{h}_{\alpha}]-\frac 12\left| \bar{\pi}_s^{\theta}[\bar{h}_{\theta}]\right|^2\right)ds
+\int_0^T \bar{\pi}_s^{\theta}[\bar{h}_{\theta}] d\nu_{s}\nonumber\\
&=
-\frac{1}{2}\int_0^T \left|\bar{\pi}_s^{\theta}[\bar{h}_{\theta}]-\bar{\pi}_s^{\alpha}[\bar{h}_{\alpha}]\right|^{2}ds+
\frac{1}{2}\int_0^T \left|\bar{\pi}_s^{\alpha}[\bar{h}_{\alpha}]\right|^{2}ds+\int_0^T \bar{\pi}_s^{\theta}[\bar{h}_{\theta}] d\nu_{s}
\end{align}
Hence, similarly to the proof of Proposition 1.32 in page 61 of \cite{Kutoyants}, we have
\begin{align}
\lim_{\delta\downarrow 0}\mathbb{P}_{\alpha}\left(\left|\bar{\theta}^{\delta}_{T}-\alpha\right|>\varepsilon\right)&=
\lim_{\delta\downarrow 0}\mathbb{P}_{\alpha}\left(\sup_{\left|\theta-\alpha\right|>\varepsilon}\frac{1}{T}\bar{\rho}_{T}^\delta(\theta,\alpha)>\sup_{\left|\theta-\alpha\right|\leq\varepsilon}\frac{1}{T}\bar{\rho}_{T}^\delta(\theta,\alpha)\right)\nonumber\\
&=
\mathbb{P}_{\alpha}\left(\sup_{\left|\theta-\alpha\right|>\varepsilon}\frac{1}{T}\bar{\rho}_{T}(\theta,\alpha)>\sup_{\left|\theta-\alpha\right|\leq\varepsilon}\frac{1}{T}\bar{\rho}_{T}(\theta,\alpha)\right)\ .\nonumber
\end{align}
At the same time for any $\eta>0$ we have
\begin{align}
\mathbb{P}_{\alpha}\left(\left|\frac{1}{T}\int_0^T \bar{\pi}_s^{\theta}[\bar{h}_{\theta}]d\nu_{s}\right|>\eta\right)&\leq\frac{1}{\eta^{2}T^{2}}\mathbb{E}_{\alpha}\int_0^T \left|\bar{\pi}_s^{\theta}[\bar{h}_{\theta}]\right|^{2}ds\nonumber\\
&\leq\frac{1}{\eta^{2}T}C_{0}\nonumber
\end{align}
for some constant $C_{0}$ that does not depend on $T$. Thus, we have in $\mathbb{P}_{\alpha}-$probability that
\[
\lim_{T\rightarrow\infty}\left[\frac{1}{T}\int_0^T \bar{\pi}_s^{\theta}[\bar{h}_{\theta}]d\nu_{s}\right]=0\ .
\]
Let us define
\begin{align*}
\bar{\bar{\rho}}_T(\theta,\alpha)&=
-\frac{1}{2}\int_0^T \left|\bar{\pi}_s^{\theta}[\bar{h}_{\theta}]-\bar{\pi}_s^{\alpha}[\bar{h}_{\alpha}]\right|^{2}ds+
\frac{1}{2}\int_0^T \left|\bar{\pi}_s^{\alpha}[\bar{h}_{\alpha}]\right|^{2}ds\ ,
\end{align*}
and recalling Condition \ref{A:Assumption4b} we get that there is $\bar{\bar{\rho}}_{\infty}(\theta,\alpha)$ such that
\[
\lim_{T\rightarrow\infty}\mathbb{E}\left|\frac{1}{T}\bar{\bar{\rho}}_T(\theta,\alpha)-\bar{\bar{\rho}}_{\infty}(\theta,\alpha)\right|=0\ .
\]
Hence, we obtain
\begin{align}
\lim_{T\rightarrow\infty}\lim_{\delta\downarrow 0}\mathbb{P}_{\alpha}\left(\left|\bar{\theta}^{\delta}_{T}-\alpha\right|>\varepsilon\right)&=
\lim_{T\rightarrow\infty}\mathbb{P}_{\alpha}\left(\sup_{\left|\theta-\alpha\right|>\varepsilon}\frac{1}{T}\bar{\rho}_{T}(\theta,\alpha)>\sup_{\left|\theta-\alpha\right|\leq\varepsilon}\frac{1}{T}\bar{\rho}_{T}(\theta,\alpha)\right)\nonumber\\
&\leq
\lim_{T\rightarrow\infty}\mathbb{P}_{\alpha}\left(\sup_{\left|\theta-\alpha\right|>\varepsilon}\frac{1}{T}\bar{\bar{\rho}}_{T}(\theta,\alpha)>\sup_{\left|\theta-\alpha\right|\leq\varepsilon}\frac{1}{T}\bar{\bar{\rho}}_{T}(\theta,\alpha)\right)\nonumber\\
&=
\mathbb{P}_{\alpha}\left(\sup_{\left|\theta-\alpha\right|>\varepsilon}\bar{\bar{\rho}}_{\infty}(\theta,\alpha)>\sup_{\left|\theta-\alpha\right|\leq\varepsilon}\bar{\bar{\rho}}_{\infty}(\theta,\alpha)\right)\nonumber\\
&=0\ ,
\end{align}
where the last computation used the identifiability Condition \ref{A:AssumptionIdentifiability}. With this, we conclude the proof of the theorem.
\end{proof}

Let us study next asymptotic normality of the maximum likelihood estimator that is based on the reduced likelihood function. We have that the maximizer will be solution to the equation $\frac{\partial}{\partial\theta}\bar{\rho}^{\delta}_{T}=0$ for $\theta\in\Theta$. Thus, the maximizer $\tilde{\theta}=\tilde{\theta}^{\delta}_{T}$ of that equation will satisfy the equation
\begin{equation}
\int_{0}^{T}\nabla_{\theta}\bar\pi_{s}^{\delta,\tilde{\theta}}[\bar{h}_{\tilde{\theta}}]\left(dY_{s}^{\delta}-\bar\pi_{s}^{\delta,\tilde{\theta}}[\bar{h}_{\tilde\theta}]ds\right)=0\ .\label{Eq:EquationForReducedMLE}
\end{equation}
 We mention here that (\ref{Eq:ThetaMLE}) and (\ref{Eq:EquationForReducedMLE}) are not equivalent; (\ref{Eq:ThetaMLE}) contains all local minima and local maxima of $\bar{\rho}^{\delta}_{T}(\theta)$ which may be more than one. Also equation \eqref{Eq:EquationForReducedMLE} may not even have a solution in $\Theta$ with positive probability.
For example, letting $\tilde{\theta}^{\delta}_{T}$ be a solution to (\ref{Eq:EquationForReducedMLE}) and assuming $\theta\in(\theta_\ell,\theta_u)$, then
\[
\bar{\theta}^{\delta}_{T}=\tilde{\theta}^{\delta}_{T}\indicator{\{\tilde{\theta}^{\delta}_{T}\in(\theta_{\ell},\theta_{u})\}}+\theta_{\ell}\indicator{\tilde{\theta}^{\delta}_{T}\leq \theta_{\ell}\}}+\theta_{u}\indicator{\{\tilde{\theta}^{\delta}_{T}\geq \theta_{u}}.
\]

Next we study asymptotic normality of the MLE corresponding to the reduced log-likelihood. We make the following assumption.
\begin{condition}\label{A:ExtraConditionForCLT}
There exists a strictly positive definite matrix $I(\alpha)$ such that we have in $L^{1}$ under the measure $\mathbb{P}_{\alpha}$
\begin{equation}
\label{eq:fisherInfo}
I(\alpha)=\lim_{T\rightarrow\infty}\frac{1}{T}\int_{0}^{T}\nabla_{\alpha}\bar\pi_{s}^{\alpha}[\bar{h}_{\alpha}]^\tr
\cdot\nabla_{\alpha}\bar\pi_{s}^{\alpha}[\bar{h}_{\alpha}] ds \ .
\end{equation}
\end{condition}
It is clear that in the case $\bar{h}_{\theta}(u)=\bar{h}_{\theta}$ that Condition \ref{A:ExtraConditionForCLT} is satisfied with constant matrix $I(\alpha)=\nabla_{\alpha}\bar{h}_{\alpha}\cdot\nabla_{\alpha}\bar{h}_{\alpha}$. In Section \ref{S:Example} a more involved example will be examined where things can be also computed explicitly in closed form.
Actually, $I(\alpha)$ is nothing else but the Fisher information matrix. By Theorem \ref{T:ConsisitencyReducedLikelihood}, based on smoothness of
$\nabla_{\theta}\bar\pi_{s}^{\delta,\theta}[\bar{h}_{\theta}]$ as a function of $\theta$ and under Condition \ref{A:ExtraConditionForCLT}, asymptotic normality of the MLE corresponding to the reduced log-likelihood holds. To be precise, we have the following theorem.
\begin{theorem}\label{T:CLTReducedLikelihood}
Assume Conditions \ref{A:Assumption1}, \ref{A:Assumption4}, \ref{A:Assumption4b}, \ref{A:AssumptionIdentifiability}, \ref{A:ExtraConditionForCLT} and that $\nabla_{\theta}\bar\pi_{s}^{\delta,\theta}[\bar{h}_{\theta}]$ is almost surely continuous as a function of $\theta$.  The maximum likelihood estimator based on (\ref{Eq:IntermediateLikelihood}) is asymptotically normal
under $\mathbb{P}_{\alpha}$, i.e.
\begin{equation}
\label{E:CLTReducedLikelihood}
\sqrt{T}\left(\bar{\theta}^{\delta}_{T}-\alpha\right)\Rightarrow N\left(0,I^{-1}(\alpha)\right)\qquad\hbox{first as $\delta\downarrow 0$ and then as $T\rightarrow\infty$,}
\end{equation}
where $I(\alpha)$ is Fisher information given by \eqref{eq:fisherInfo}.
\end{theorem}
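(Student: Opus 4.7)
The plan is the standard MLE asymptotic normality argument built around a Taylor expansion of the score equation \eqref{Eq:EquationForReducedMLE}, carried out with care for the iterated limit ($\delta \downarrow 0$ first, then $T \to \infty$). By Theorem~\ref{T:ConsisitencyReducedLikelihood} one has $\bar\theta^{\delta}_T \to \alpha$ in $\mathbb P_{\alpha}$-probability, so with probability tending to one the maximizer lies in the interior of $\Theta$ and satisfies \eqref{Eq:EquationForReducedMLE}. Writing $H^{\delta}_T(\theta) = \nabla^2_{\theta}\bar\rho^{\delta}_T(\theta)$ and expanding the score around $\alpha$, there exists $\theta^{*}$ on the segment between $\alpha$ and $\tilde\theta^{\delta}_T$ with
\begin{equation*}
0 \;=\; \nabla_{\theta}\bar\rho^{\delta}_T(\alpha) \;+\; H^{\delta}_T(\theta^{*})\bigl(\tilde\theta^{\delta}_T-\alpha\bigr),
\end{equation*}
and the goal is to show that $T^{-1/2}\nabla_{\theta}\bar\rho^{\delta}_T(\alpha) \Rightarrow N(0,I(\alpha))$ while $-T^{-1}H^{\delta}_T(\theta^{*}) \to I(\alpha)$ in probability, then conclude by Slutsky's theorem.

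For the score at $\alpha$, I would substitute $dY^{\delta}_s = \pi^{\delta,\alpha}_s[h_{\alpha}]\,ds + d\nu_s$, where $\nu$ is a $\mathbb P_{\alpha}$-Brownian motion, into $\nabla_{\theta}\bar\rho^{\delta}_T(\alpha) = \int_0^T \nabla_{\alpha}\bar\pi^{\delta,\alpha}_s[\bar h_{\alpha}]\bigl(dY^{\delta}_s - \bar\pi^{\delta,\alpha}_s[\bar h_{\alpha}]\,ds\bigr)$. This splits into a vanishing drift piece
\begin{equation*}
\tfrac{1}{\sqrt T}\int_0^T \nabla_{\alpha}\bar\pi^{\delta,\alpha}_s[\bar h_{\alpha}]\bigl(\pi^{\delta,\alpha}_s[h_{\alpha}]-\bar\pi^{\delta,\alpha}_s[\bar h_{\alpha}]\bigr)ds,
\end{equation*}
whose $L^2$ norm goes to zero as $\delta\downarrow 0$ by Theorem~\ref{T:FilterConvergence1}, plus a martingale $M^{\delta}_T = T^{-1/2}\int_0^T \nabla_{\alpha}\bar\pi^{\delta,\alpha}_s[\bar h_{\alpha}]\,d\nu_s$ whose predictable quadratic variation is the time-average in \eqref{eq:fisherInfo} with $\bar\pi^{\delta,\alpha}$ in place of $\bar\pi^{\alpha}$. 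Sending $\delta\downarrow 0$ (Theorem~\ref{T:FilterConvergence1}) and then $T\to\infty$ (Condition~\ref{A:Assumption4b} and Condition~\ref{A:ExtraConditionForCLT}), the quadratic variation converges in $L^1$ to $I(\alpha)$, and the martingale CLT gives $M^{\delta}_T \Rightarrow N(0,I(\alpha))$.

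For the Hessian I differentiate \eqref{Eq:IntermediateLikelihood} twice to obtain, schematically,
\begin{equation*}
H^{\delta}_T(\theta) \;=\; \int_0^T \nabla^2_{\theta}\bar\pi^{\delta,\theta}_s[\bar h_{\theta}]\bigl(dY^{\delta}_s-\bar\pi^{\delta,\theta}_s[\bar h_{\theta}]\,ds\bigr) \;-\; \int_0^T \nabla_{\theta}\bar\pi^{\delta,\theta}_s[\bar h_{\theta}]^{\tr}\nabla_{\theta}\bar\pi^{\delta,\theta}_s[\bar h_{\theta}]\,ds .
\end{equation*}
At $\theta=\alpha$, again using the innovations decomposition, the term against $dY^{\delta}_s$ produces a martingale piece of order $\sqrt T$ (so $O_{\mathbb P}(T^{-1/2})$ after dividing by $T$) plus a drift piece proportional to $\pi^{\delta,\alpha}_s[h_{\alpha}]-\bar\pi^{\delta,\alpha}_s[\bar h_{\alpha}]$, which vanishes under $\delta\downarrow 0$ by Theorem~\ref{T:FilterConvergence1}. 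The outer-product integral, divided by $T$, converges to $I(\alpha)$ by Condition~\ref{A:ExtraConditionForCLT}. Hence $-T^{-1}H^{\delta}_T(\alpha) \to I(\alpha)$ in the iterated limit, and the continuity-in-$\theta$ of $\nabla_{\theta}\bar\pi^{\delta,\theta}_s[\bar h_{\theta}]$ combined with consistency $\theta^{*}\to\alpha$ upgrades this to $-T^{-1}H^{\delta}_T(\theta^{*})\to I(\alpha)$. Rearranging the Taylor identity and applying Slutsky yields the claim.

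The main obstacle is the last step: replacing $H^{\delta}_T(\theta^{*})$ by $H^{\delta}_T(\alpha)$ requires equicontinuity in $\theta$ of the time-averaged Hessian integrand, uniformly in $\delta$ and in $T$. This is not purely a pointwise convergence question; one typically proves it by differentiating the Zakai evolution \eqref{Eq:avgZakaiDelta} in $\theta$ and propagating moment bounds, using Condition~\ref{A:Assumption4} for the H\"older regularity of $\bar h_{\theta}$ and Condition~\ref{A:Assumption4b} for the tightness of $\bar U$. A secondary technical point is justifying that the order of the two limits may be taken as stated: this amounts to showing the families indexed by $\delta$ of the random objects $T^{-1/2}\nabla_{\theta}\bar\rho^{\delta}_T(\alpha)$ and $T^{-1}H^{\delta}_T(\alpha)$ are tight as $\delta\downarrow 0$ for each fixed $T$, which follows from the $L^2$ bounds produced by Theorem~\ref{T:FilterConvergence1} together with BDG on the martingale components.
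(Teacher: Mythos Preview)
Your approach is the classical ``expand the score around $\alpha$ and invoke the Hessian'' argument, and its logic is sound, but it is \emph{not} the route the paper takes, and the difference matters for the stated hypotheses. The paper does \emph{not} Taylor-expand the score $\nabla_\theta\bar\rho^{\delta}_T$ at $\alpha$; instead it starts from the score equation at $\bar\theta=\bar\theta^{\delta}_T$ and expands only the inner factor $\bar\pi^{\delta,\bar\theta}_s[\bar h_{\bar\theta}]$ to first order in $\theta$ around $\alpha$, writing $\bar\pi^{\delta,\bar\theta}_s[\bar h_{\bar\theta}]=\bar\pi^{\delta,\alpha}_s[\bar h_{\alpha}]+(\bar\theta-\alpha)\dot{\bar\pi}^{\delta,\alpha^*}_s[\bar h_{\alpha^*}]$. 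After rearranging and inserting the innovations decomposition $dY^{\delta}_s=\pi^{\delta,\alpha}_s[h_\alpha]\,ds+d\nu_s$, the ``denominator'' becomes
\[
f^{\delta}_T(\bar\theta,\alpha^{*})=\frac{1}{T}\int_0^T \dot{\bar\pi}^{\delta,\bar\theta}_s[\bar h_{\bar\theta}]\cdot\dot{\bar\pi}^{\delta,\alpha^{*}}_s[\bar h_{\alpha^{*}}]\,ds,
\]
which is an outer product of \emph{first} derivatives only, and converges to $I(\alpha)$ by consistency of $\bar\theta,\alpha^{*}\to\alpha$, the assumed continuity of $\nabla_\theta\bar\pi^{\delta,\theta}_s[\bar h_\theta]$, and Condition~\ref{A:ExtraConditionForCLT}. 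The numerator is $T^{-1/2}\int_0^T\dot{\bar\pi}^{\delta,\bar\theta}_s[\bar h_{\bar\theta}]\,d\nu_s$ plus a drift remainder involving $\pi^{\delta,\alpha}_s[h_\alpha]-\bar\pi^{\delta,\alpha}_s[\bar h_\alpha]$, handled exactly as you describe via Theorem~\ref{T:FilterConvergence1}; the martingale CLT is then quoted from Kutoyants (Proposition~1.21) together with Slutsky.

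The practical consequence is that the paper never needs $\nabla^2_\theta\bar\pi^{\delta,\theta}_s[\bar h_\theta]$ at all. Your Hessian-based route requires twice differentiability of $\theta\mapsto\bar\pi^{\delta,\theta}_s[\bar h_\theta]$, which is \emph{not} among the theorem's hypotheses (only a.s.\ continuity of the first derivative is assumed), and, as you correctly flag, it also requires a uniform-in-$(\delta,T)$ equicontinuity of the time-averaged Hessian. Neither issue arises in the paper's argument because the second derivative simply never appears. So your plan is a legitimate alternative under stronger regularity, but under the hypotheses as stated it has a genuine gap; the paper's first-order expansion of the filter (rather than of the score) is what makes the proof go through with only continuity of $\nabla_\theta\bar\pi$.
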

\begin{proof} 
We write $\dot{\bar\pi}_{s}^{\delta,\theta}[\bar{h}_{\theta}]=\nabla_{\theta}\bar\pi_{s}^{\delta,\theta}[\bar{h}_{\theta}]$ for notational convenience.
Based on (\ref{Eq:EquationForReducedMLE}) and for $\theta=\bar{\theta}=\bar{\theta}^{\delta}_{T}$, we write for some $\alpha^{*}$ such that $|\alpha^{*}-\alpha|\leq |\bar{\theta}-\alpha|$
\begin{align}
0&=\int_{0}^{T}\dot{\bar\pi}_{s}^{\delta,\bar{\theta}}[\bar{h}_{\bar\theta}]\left(dY_{s}^{\delta}-\bar\pi_{s}^{\delta,\bar{\theta}}[\bar{h}_{\bar\theta}]ds\right)\nonumber\\
&=\int_{0}^{T}\dot{\bar\pi}_{s}^{\delta,\bar{\theta}}[\bar{h}_{\bar\theta}]\left(dY_{s}^{\delta}-\bar\pi_{s}^{\delta,\alpha}[\bar{h}_{\alpha}]ds-(\bar{\theta}-\alpha)\dot{\bar\pi}_{s}^{\delta,\alpha^{*}}[\bar{h}_{\alpha^{*}}]ds\right)\nonumber\\
&=\int_{0}^{T}\dot{\bar\pi}_{s}^{\delta,\bar{\theta}}[\bar{h}_{\bar\theta}]dY_{s}^{\delta}
-\int_{0}^{T}\dot{\bar\pi}_{s}^{\delta,\bar{\theta}}[\bar{h}_{\bar\theta}]\cdot\bar\pi_{s}^{\delta,\alpha}[\bar{h}_{\alpha}]ds
-(\bar{\theta}-\alpha)\int_{0}^{T}\dot{\bar\pi}_{s}^{\delta,\bar{\theta}}[\bar{h}_{\bar\theta}]\cdot\dot{\bar\pi}_{s}^{\delta,\alpha^{*}}[\bar{h}_{\alpha^{*}}]ds\ .\nonumber
\end{align}
After some term rearrangement, we obtain
\begin{align*}
\sqrt{T}\left(\bar{\theta}^{\delta}_{T}-\alpha\right)&=\left(\frac{1}{T}\int_{0}^{T}\dot{\bar\pi}_{s}^{\delta,\bar{\theta}}[\bar{h}_{\bar\theta}]\cdot\dot{\bar\pi}_{s}^{\delta,\alpha^{*}}[\bar{h}_{\alpha^{*}}]ds\right)^{-1}
\frac{1}{\sqrt{T}}\int_{0}^{T}\dot{\bar\pi}_{s}^{\delta,\bar{\theta}}[\bar{h}_{\bar\theta}] d\nu_{s}\nonumber\\
&+\left(\frac{1}{T}\int_{0}^{T}\dot{\bar\pi}_{s}^{\delta,\bar{\theta}}[\bar{h}_{\bar\theta}]\cdot\dot{\bar\pi}_{s}^{\delta,\alpha^{*}}[\bar{h}_{\alpha^{*}}]ds\right)^{-1}
\frac{1}{\sqrt{T}}\int_{0}^{T}\dot{\bar\pi}_{s}^{\delta,\bar{\theta}}[\bar{h}_{\bar\theta}]\cdot\left(\pi^{\delta,\alpha}_{s}[h_{\alpha}]-\bar\pi_{s}^{\delta,\alpha}[\bar{h}_{\alpha}]\right)ds\ ,
\end{align*}
and by taking $\delta\rightarrow0$, ergodcity and Theorem \ref{T:FilterConvergence1} guarantee that
\begin{align}
\lim_{\delta\rightarrow 0}\mathbb{E}_{\alpha}\int_{0}^{T}\left|\pi^{\delta,\alpha}_{s}[h_{\alpha}]-\bar\pi_{s}^{\delta,\alpha}[\bar{h}_{\alpha}]\right|^{2}ds&=0\ .
\end{align}
The latter statement and Condition \ref{A:ExtraConditionForCLT}  guarantee us that in $\mathbb{P}_{\alpha}$-probability as first $\delta\downarrow 0$ and then $T\rightarrow\infty$
\begin{align}
&\left(\frac{1}{T}\int_{0}^{T}\dot{\bar\pi}_{s}^{\delta,\bar{\theta}}[\bar{h}_{\bar\theta}]\cdot\dot{\bar\pi}_{s}^{\delta,\alpha^{*}}[\bar{h}_{\alpha^{*}}]ds\right)^{-1}
\frac{1}{\sqrt{T}}\int_{0}^{T}\dot{\bar\pi}_{s}^{\delta,\bar{\theta}}[\bar{h}_{\bar\theta}]\cdot\left(\pi_{s}^{\delta,\alpha}[h_{\alpha}]-\bar\pi_{s}^{\delta,\alpha}[\bar{h}_{\alpha}]\right)ds\rightarrow 0\ .\label{Eq:LimitCLT1}
\end{align}
For notational convenience, let us define the random matrix
\[
f^{\delta}_{T}(\theta_{1},\theta_{2})=\frac{1}{T}\int_{0}^{T}\dot{\bar\pi}_{s}^{\delta,\theta_{1}}[\bar{h}_{\theta_{1}}]\cdot\dot{\bar\pi}_{s}^{\delta,\theta_{2}}[\bar{h}_{\theta_{2}}]ds\ .
\]
Since under $\mathbb{P}_{\alpha}$, the innovations process
\[ \nu_t\doteq Y_t^\delta-y_0 - \int_0^t\pi^{\delta,\alpha}_{s}[h_{\alpha}]ds\qquad\forall t\in[0,T]\ \]
is  a $\mathbb P_{\alpha}$-Brownian motion, for the stochastic integral we notice:
 \begin{align}
&\left(\frac{1}{T}\int_{0}^{T}\dot{\bar\pi}_{s}^{\delta,\bar{\theta}}[\bar{h}_{\bar\theta}]\cdot\dot{\bar\pi}_{s}^{\delta,\alpha^{*}}[\bar{h}_{\alpha^{*}}]ds\right)^{-1}
\frac{1}{\sqrt{T}}\int_{0}^{T}\dot{\bar\pi}_{s}^{\delta,\bar{\theta}}[\bar{h}_{\bar\theta}] d\nu_{s}=\left(f^{\delta}_{T}(\bar{\theta},\alpha^{*})\right)^{-1}
\frac{1}{\sqrt{T}}\int_{0}^{T}\dot{\bar\pi}_{s}^{\delta,\bar{\theta}}[\bar{h}_{\bar\theta}]d\nu_{s}\nonumber
 \end{align}

Since $|\alpha^{*}-\alpha|\leq |\bar{\theta}^{\delta}_{T}-\alpha|$,  Theorem \ref{T:ConsisitencyReducedLikelihood} implies that
\[
\lim_{T\rightarrow\infty}\lim_{\delta\downarrow 0}\mathbb{P}_{\alpha}\left(\left|\alpha^*-\alpha\right|>\varepsilon\right)\leq \lim_{T\rightarrow\infty}\lim_{\delta\downarrow 0}\mathbb{P}_{\alpha}\left(\left|\bar\theta_T^\delta-\alpha\right|>\varepsilon\right)=0\qquad\hbox{for any $\varepsilon>0$,}
\]
and hence by the almost sure continuity of $\dot{\bar\pi}_{s}^{\delta,\theta}[\bar{h}_{\theta}]$ as a function of $\theta$ and  Condition \ref{A:ExtraConditionForCLT}, we obtain that in $\mathbb{P}_{\alpha}$ probability as $\delta\downarrow 0$ and then $T\rightarrow\infty$
\[
f^{\delta}_{T}\left(\bar{\theta},\alpha^{*}\right)\rightarrow I(\alpha)\ .
\]
Then, Proposition 1.21 in \cite{Kutoyants} and Slutsky's theorem imply that in
distribution, first as $\delta\downarrow 0$ and then as $T\rightarrow\infty$,
\begin{align}
\left(\frac{1}{T}\int_{0}^{T}\dot{\bar\pi}_{s}^{\delta,\bar{\theta}}[\bar{h}_{\bar\theta}]\cdot\dot{\bar\pi}_{s}^{\delta,\alpha^{*}}[\bar{h}_{\alpha^{*}}]ds\right)^{-1}
\frac{1}{\sqrt{T}} \int_{0}^{T}\dot{\bar\pi}_{s}^{\delta,\bar{\theta}}[\bar{h}_{\bar\theta}]d\nu_{s}\Rightarrow N\left(0,I^{-1}(\alpha)\right).\label{Eq:LimitCLT2}
\end{align}

Limits (\ref{Eq:LimitCLT1}) and (\ref{Eq:LimitCLT2}) imply then the statement of the theorem by Slutsky's theorem on the combined expression.
\end{proof}

\section{Examples}\label{S:Example}
In this section we consider numerically several examples in order to illustrate the results of this paper. Even though the theory of this paper has been developed under the assumption that $h_{\theta}$ is bounded, the numerical results of this section indicate that there is some degree of flexibility to this assumption and that the results should be broader applicable.  Let $\theta$ be a finite-dimensional parameter and consider the system of equations
\begin{eqnarray}
dY^{\delta}_{t}&=&a(\theta)\lambda\left(X^{\delta}_{t}\right)U^{\delta}_{t}dt + \Sigma dW_{t}\nonumber\hspace{3.6cm}\hbox{(observed)}\\
dU^{\delta}_{t}&=& -\beta(\theta)q\left(X^{\delta}_{t}\right)U^{\delta}_{t}dt + \gamma(\theta)dV_{t}\nonumber\hspace{2.9cm}\hbox{(hidden)}\\
dX^{\delta}_{t}&=& \frac{1}{\delta}b_{\theta}\left(X^{\delta}_{t}\right)dt + \frac{1}{\sqrt{\delta}}\sigma_{\theta}\left(X^{\delta}_{t}\right)dB_{t}\label{Eq:ModelExamplePrelimit}\hspace{2.7cm}\hbox{(hidden)}
\end{eqnarray}
where $\beta(\theta)>0$ and $a(\theta)\neq 0, \gamma(\theta)\neq 0$, $\Sigma\neq 0$, and $Y,U,X$ take values in $\mathbb{R}^{1}$. Without loss of generality and for presentation purposes the theory of the paper was developed for $\Sigma=I$, but as we shall see here the same results hold with $\Sigma\neq I$ as long as $\Sigma$ is non-degenerate.

Let us further assume that we know the invariant measure for $X$ and that it is given by $\mu(dx)$. Then, we know that the limit of (\ref{Eq:ModelExamplePrelimit}) in probability as $\delta\downarrow 0$ is
\begin{eqnarray}
d\bar{Y}_{t}&=a(\theta)\bar{\lambda}\bar{U}_{t}dt+ \Sigma dW_{t}\nonumber\\
d\bar{U}_{t}&=-\beta(\theta)\bar{q}\bar{U}_{t}dt+\gamma(\theta)dV_{t}\label{Eq:ModelExampleLimit}\ .
\end{eqnarray}

It is relatively easy to see that the diffusion coefficient $\Sigma$ can be viewed as a scaling factor. Under $\mathbb{P}_{\alpha}$, the process $\{\nu_{t}, t\in[0,T]\}$ defined by the equation
\[ \nu_t=\frac1\Sigma\left(Y_t^\delta-y_0 - \int_0^t\pi^{\delta,\alpha}_{s}[h_{\alpha}]ds\right)\qquad\forall t\in[0,T]\ \]
is a $\mathbb P_{\alpha}$-Brownian motion under the filtration generated from the observed $Y^{\delta}$ process. The maximizer satisfies
\[\frac{1}{\Sigma^2} \int_{0}^{T}\nabla_{\theta}\bar\pi_{s}^{\delta,\tilde{\theta}}[\bar{h}_{\tilde{\theta}}]\left(dY_{s}^{\delta}-\bar\pi_{s}^{\delta,\tilde{\theta}}[\bar{h}_{\tilde\theta}]ds\right)=0\ ,\]
and the Fisher information turns out to be
\[I(\alpha)=\lim_{T\rightarrow\infty}\frac{1}{T\Sigma^{2}}\int_{0}^{T}\nabla_{\alpha}\bar\pi_{s}^{\alpha}[\bar{h}_{\alpha}]^\tr
\cdot\nabla_{\alpha}\bar\pi_{s}^{\alpha}[\bar{h}_{\alpha}] ds \ .\]

The limiting system (\ref{Eq:ModelExampleLimit}) uses the well-known Kalman-Bucy filter. The inference problem for the limiting linear system (\ref{Eq:ModelExampleLimit}) was studied in \cite{Kutoyants}. In \cite{Kutoyants}, the author develops MLE estimators for $\theta$ based on (\ref{Eq:ModelExampleLimit}), i.e. using as data $\bar{\mathcal{Y}}_{t}$. However, the difference of our setup with the rest of the literature is that  we want to estimate $\theta$ based on observations $\mathcal{Y}^{\delta}_{t}$, which come from the multiscale model (\ref{Eq:ModelExamplePrelimit}) and not from the limit model (\ref{Eq:ModelExampleLimit}). Of course, the limit problem is used in order to derive properties of the estimators, but the actual inference is done based on observations from the multiscale model.

Let us write $\bar{a}(\theta)=a(\theta)\bar{\lambda}$, $\bar{\beta}(\theta)=\beta(\theta)\bar{q}$. Notice that in the notation of Section \ref{S:ProblemFormulation} we have $\bar{h}_{\theta}(u)=\bar{a}(\theta) u$ and $\bar{g}_{\theta}(u)=-\bar{\beta}(\theta) u$. Let us compute the Fisher information matrix $I(\alpha)$ for this model and derive the conditions under which $I(\alpha)$ is strictly positive and the model is identifiable.
Let us first denote $\hat{U}_{t}=\mathbb{E}[U_{t}|\mathcal{\bar{Y}}_{t}]$. It is known that $\hat{U}_{t}$ satisfies the equation
\begin{align}
d\hat{U}_{t}&=-\bar{\beta}(\theta)\hat{U}_{t}dt+\frac{\bar{a}(\theta)\hat{\sigma}_{t}(\theta)}{\Sigma^{2}}\left(d\bar{Y}_{t}-\bar{a}(\theta)\hat{U}_{t}dt\right)\ ,\label{Eq:Kalman01}
\end{align}
where $\hat{\sigma}_{t}(\theta)=\mathbb{E}(\bar{U}_{t}-\hat{U}_{t})^{2}$  solves the Ricatti equation
\begin{align}
\frac{d\hat{\sigma}_{t}(\theta)}{dt}&=-2\bar{\beta}(\theta)\hat{\sigma}_{t}(\theta)-\frac{\bar{a}(\theta)(\hat{\sigma}_{t}(\theta))^{2}}{\Sigma^{2}}+\gamma^{2}(\theta)\ .\label{Eq:Kalman02}
\end{align}
Next let us define
\[
\zeta(\theta)=\sqrt{\bar{\beta}^{2}(\theta)+\gamma^{2}(\theta)\bar{a}^{2}(\theta)/\Sigma^2}-\bar{\beta}(\theta)=\kappa(\theta)-\bar{\beta}(\theta)\ .
\]

It is easy to see that if $\hat{\sigma}_{0}(\theta)=\frac{\Sigma^{2}}{\bar{a}^{2}(\theta)}\zeta(\theta)$, then $\hat{\sigma}_{t}(\theta)=\frac{\Sigma^{2}}{\bar{a}^{2}(\theta)}\zeta(\theta)$ for all $t\geq 0$, which implies that $\frac{\Sigma^{2}}{\bar{a}^{2}(\theta)}\zeta(\theta)$ is a stationary solution to (\ref{Eq:Kalman02}). If on the other hand
$\hat{\sigma}_{0}(\theta)\neq\frac{\Sigma^{2}}{\bar{a}^{2}(\theta)}\zeta(\theta)$ then $\hat{\sigma}_{t}(\theta)$ converges exponentially fast to $\frac{\Sigma^{2}}{\bar{a}^{2}(\theta)}\zeta(\theta)$, see Section 3.1.1 of \cite{Kutoyants}. Hence, in order to simplify things, let us  assume that (\ref{Eq:Kalman01}) and (\ref{Eq:Kalman02}) are in the stationary regime. In this case, if the initial distribution of $\bar{U}_{0}$ is $N\left(\bar{\pi}^{\theta}_{0}[\bar{h}_{\theta}],\frac{\Sigma^{2}}{\bar{a}^{2}(\theta)}\zeta(\theta)\right)$ then $\bar{U}_{t}\sim N\left(\bar{\pi}^{\theta}_{0}[\bar{h}_{\theta}],\frac{\Sigma^{2}}{\bar{a}^{2}(\theta)}\zeta(\theta)\right)$ for all $t\geq 0$. In this case $\bar{\pi}^{\theta}_{t}[\bar{h}_{\theta}]=\bar{a}(\theta)\hat{\bar U}_{t}$ will satisfy the equation
\begin{eqnarray}
d\bar{\pi}^{\theta}_{t}[\bar{h}_{\theta}]&=&-\bar{\beta}(\theta)\bar{\pi}^{\theta}_{t}[\bar{h}_{\theta}]dt+
\zeta(\theta)\left(d\bar{Y}_{t}-\bar{\pi}^{\theta}_{t}[\bar{h}_{\theta}]dt\right)\ .\label{Eq:AveragedLinearFilter}
\end{eqnarray}

Now notice that if $\theta=\alpha$ (i.e., the true parameter value), then $\nu_{t}$ defined by $ d\bar{\nu}_{t}=\frac{1}{\Sigma}\left(d\bar{Y}_{t}-\bar{\pi}^{\theta}_{t}[\bar{h}_{\theta}]dt\right)$ is  a $\mathbb{P}_{\alpha}$ Brownian motion. In the general case $\bar{\pi}^{\theta}_{t}[\bar{h}_{\theta}]$ satisfies the averaged linear SDE (\ref{Eq:AveragedLinearFilter}), so when $\theta=\alpha$ we have
\begin{equation*}
\bar{\pi}^{\alpha}_{t}[\bar{h}_{\alpha}]=\bar{\pi}^{\alpha}_{0}[\bar{h}_{\alpha}]e^{-\bar{\beta}(\alpha)t}+
\zeta(\alpha) \Sigma\int_{0}^{t}e^{-\bar{\beta}(\alpha)(t-s)}d\bar{\nu}_{s}\ ,
\end{equation*}
from which it is clear that $\bar{\pi}^{\alpha}_{t}[\bar{h}_{\alpha}]$ is Gaussian with invariant law $N\left(0,\frac{\zeta^{2}(\alpha)\Sigma^2}{2\bar{\beta}(\alpha)}\right)$.

Next, considering the derivative of $\bar{\pi}^{\theta}_{t}[\bar{h}_{\theta}]$ with respect to $\theta$, at $\theta=\alpha$ we find that $\dot{\bar{\pi}}^{\alpha}_{t}[\bar{h}_{\alpha}]$ satisfies the SDE
\begin{equation}
d\dot{\bar{\pi}}^{\alpha}_{t}[\bar{h}_{\alpha}]=\left(-\dot{\bar{\beta}}(\alpha)\bar{\pi}^{\alpha}_{t}[\bar{h}_{\alpha}]-\kappa(\alpha)\dot{\bar{\pi}}^{\alpha}_{t}[\bar{h}_{\alpha}]\right)dt
+\dot{\zeta}(\alpha) \Sigma d\bar{\nu}_{t}\ ,\label{Eq:Kalman1}
\end{equation}
and thus we obtain
\begin{equation}
\dot{\bar{\pi}}^{\alpha}_{t}[\bar{h}_{\alpha}]=\dot{\bar{\pi}}^{\alpha}_{0}[\bar{h}_{\alpha}]e^{-\kappa(\alpha)t}
-\dot{\bar{\beta}}(\alpha)\int_{0}^{t}e^{-\kappa(\alpha)(t-s)}\bar{\pi}^{\alpha}_{s}[\bar{h}_{\alpha}]ds
+\dot{\zeta}(\alpha)\int_{0}^{t}e^{-\kappa(\alpha)(t-s)} \Sigma d\bar{\nu}_{s}\ ,\label{Eq:Kalman2}
\end{equation}
from which Fubini's theorem gives
\begin{equation}
\dot{\bar{\pi}}^{\alpha}_{t}[\bar{h}_{\alpha}]=\int_{0}^{t}e^{-\kappa(\alpha)(t-s)}\left(\dot{\kappa}(\alpha)-\dot{\bar{\beta}}(\alpha)e^{\zeta(\alpha)(t-s)}\right) \Sigma d\bar{\nu}_{s}+o(1)\ .\label{Eq:Kalman3}
\end{equation}
Hence, by direct computation using (\ref{Eq:Kalman3}) we can compute the asymptotic variance of the MLE. In particular, we obtain that in $\mathbb{P}_{\alpha}$ probability
\begin{align}
I(\alpha)&=\lim_{T\rightarrow\infty}\frac{1}{T\Sigma^{2}}\int_{0}^{T}\left|\dot{\bar\pi}_{s}^{\alpha}[\bar{h}_{\alpha}]
\right|^{2}ds\nonumber\\
&=\frac{\dot{\bar{\beta}}^{2}(\alpha)}{2\bar{\beta}(\alpha)}+\frac{\dot{\kappa}^{2}(\alpha)}{2\kappa(\alpha)}-2\frac{\dot{\bar{\beta}}(\alpha)\dot{\kappa}(\alpha)}{\bar{\beta}(\alpha)+\kappa(\alpha)}\ .\label{Eq:FisherInformation}
\end{align}

Let us assume now that
\begin{condition}\label{A:AssumptionLinearProblem}
For any compact $\tilde{\Theta}\subset \Theta$ and for any $\epsilon>0$
\begin{itemize}
\item{$\inf_{\theta\in\tilde{\Theta}}\left(|\dot{\bar{\beta}}(\theta)|+|\dot{\kappa}(\theta)|\right)>0$,}
\item{$\inf_{\theta\in\tilde{\Theta}}\inf_{|\alpha-\theta|>\epsilon}
\left(|\bar{\beta}(\alpha)-\bar{\beta}(\theta)|+|\kappa(\alpha)-\kappa(\theta)|\right)>0$.}
\end{itemize}
\end{condition}
It is then proven in a related case in Section 3.1.1 of \cite{Kutoyants}, that in the specific example Condition \ref{A:AssumptionLinearProblem} implies essentially Condition \ref{A:AssumptionIdentifiability}, i.e., we have identifiability of the model, and that  the asymptotic variance is strictly positive, i.e. $I(\alpha)>0$.

Let us next present some simulation studies based on the model problem (\ref{Eq:ModelExamplePrelimit}). We consider three different examples. The examples lack the boundedness of assumption on $h_{\theta}$ indicating the broader applicability of the theoretical results of this paper.
\subsection{Simulation Example 1} Let the processes be scalars $(Y^\delta,U^\delta,X^\delta)\in\mathbb R\times\mathbb R\times\mathbb R$, and consider the following example of the system in \eqref{Eq:ModelExamplePrelimit}:
\begin{eqnarray}
dY^{\delta}_{t}&=&e^{X^{\delta}_{t}} U^{\delta}_{t}dt +  \Sigma dW_{t}\nonumber\hspace{3.6cm}\hbox{(observed)}\\
dU^{\delta}_{t}&=& -U^{\delta}_{t}dt + dV_{t}\nonumber\hspace{4.4cm}\hbox{(hidden)}\\
dX^{\delta}_{t}&=& \frac{1}{\delta}\left(\theta-X^{\delta}_{t}\right)dt + \frac{\sigma}{\sqrt{\delta}} dB_{t}\label{Eq:ModelExamplePrelimitSpecific}\hspace{2.5cm}\hbox{(hidden)}\ .
\end{eqnarray}
A key feature of this example is that the process $X^\delta$ behaves like a multiplicative noise factor. Figure \ref{fig:example1data} shows a realization of the system for a parameterization having $T=25$, $\delta=0.01$, $\Sigma=\sigma=0.1$, along with a true $\alpha=0$ and discrete time step $\Delta t=.02$.

\begin{figure}[htbp] 
   \centering
   \includegraphics[width=4in]{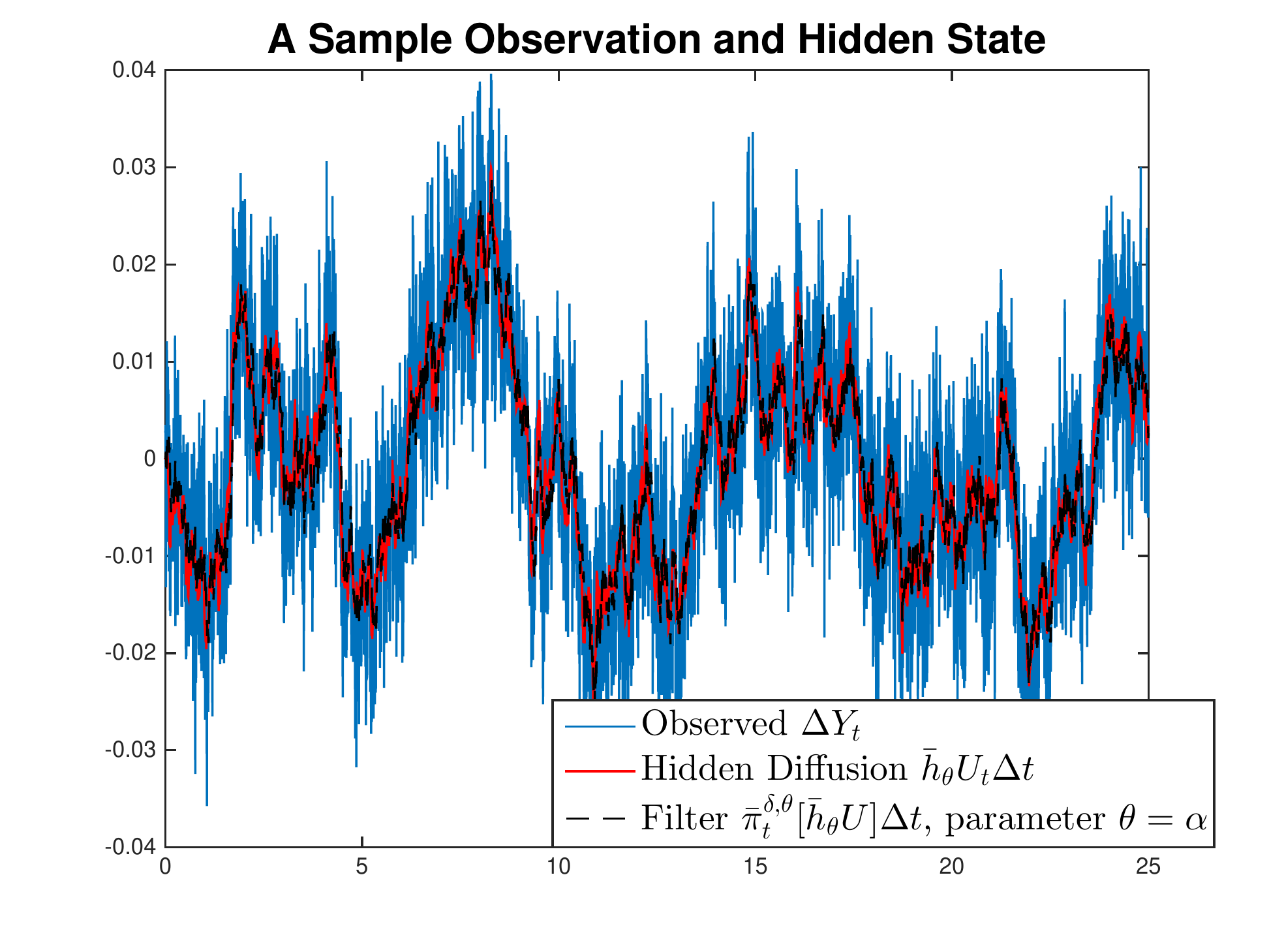}
   \caption{The sample realized from the system given in \eqref{Eq:ModelExamplePrelimitSpecific} with true parameter $\alpha = 0$.}
   \label{fig:example1data}
\end{figure}

In this case the invariant measure $\mu_{\theta}(dx)$ of the $X$ process with $\delta=1$ is that corresponding to $N(\theta,\sigma^{2}/2)$. Notice also that we can compute the Fisher information from (\ref{Eq:FisherInformation}) in closed form and obtain
\begin{align*}
I(\alpha)&=\frac{e^{4\alpha+\sigma^{2}}}{2\Sigma^4(1+e^{2\alpha+\sigma^{2}/2}/\Sigma^2)^{3/2}}.
\end{align*}
Table \ref{T:CLTvar1} presents the standard error for the empirical error for estimator alongside the predicted error from the Fisher information. The table shows a comparison for different values of the true parameter.
\begin{table}
\centering
\begin{tabular}{|c|c|c|c|}
\multicolumn{4}{c}{Statistics for different values of the true parameter for $\theta$.}\\
\hline
$\theta$& estimator &empirical std-err.& theoretical std.err\\
\hline
0&-0.0170&0.0982&0.0900\\
1&0.9844&0.0618&0.0542\\
1.5&1.4591&0.0503& 0.0422\\
\hline
\end{tabular}
\vspace{.2cm}
\caption{Model \eqref{Eq:ModelExamplePrelimitSpecific}, 500 simulations  computed with $T=25$, $\delta=0.01$, $\Sigma=\sigma=0.1$. This table shows the estimator, the empirical standard error and the standard error predicted by Theorem \ref{T:CLTReducedLikelihood}.}
\label{T:CLTvar1}
\end{table}
In Figure \ref{fig:example1hists} we present the histograms for the three different cases of true value of the $\alpha$ parameter, together with the fitted theoretical normal curve as this is given by Theorem \ref{T:CLTReducedLikelihood}.

\begin{figure}[htbp]
\centering
\begin{tabular}{cc}
	\includegraphics[width=3in]{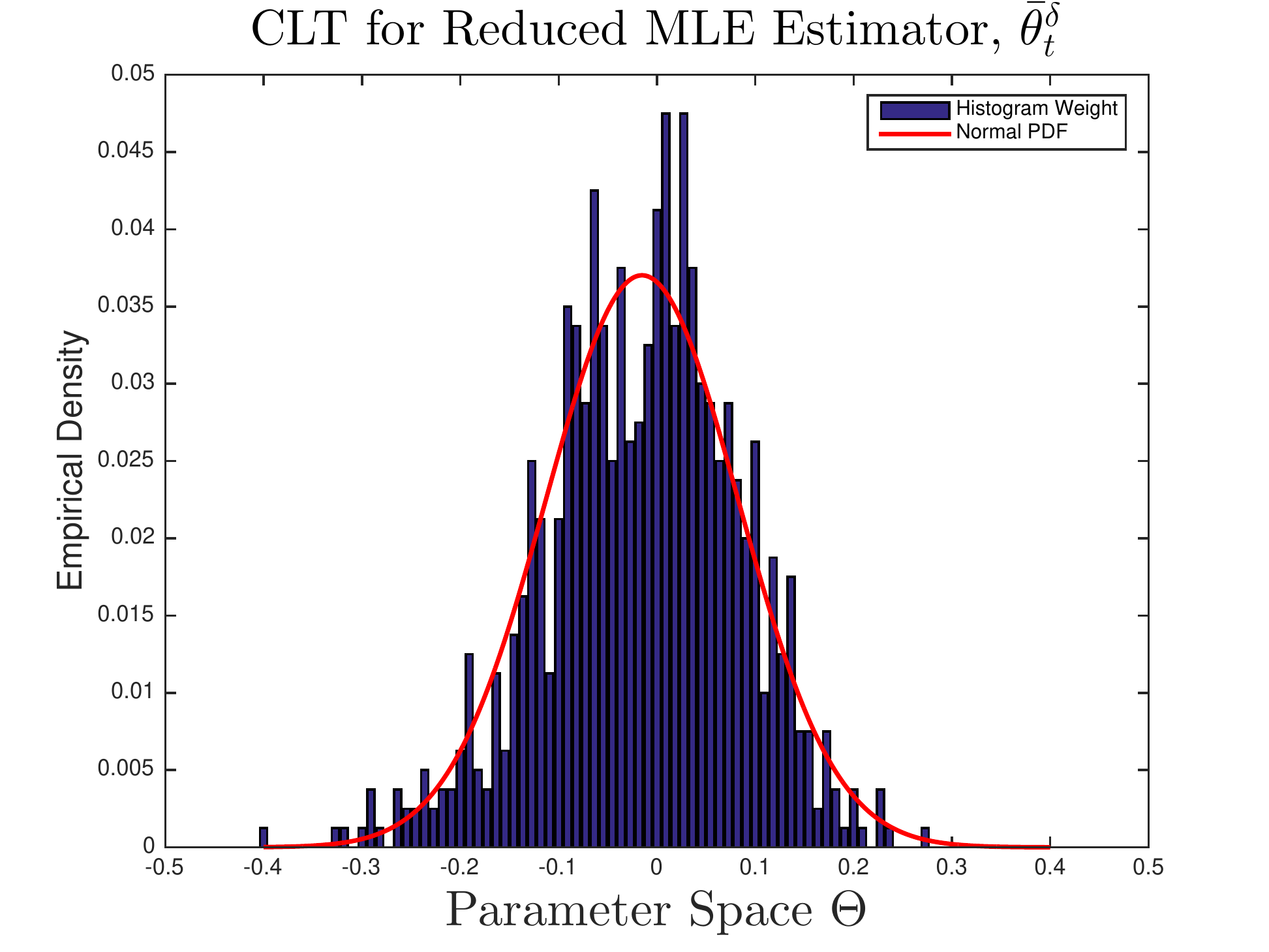}&
	\includegraphics[width=3in]{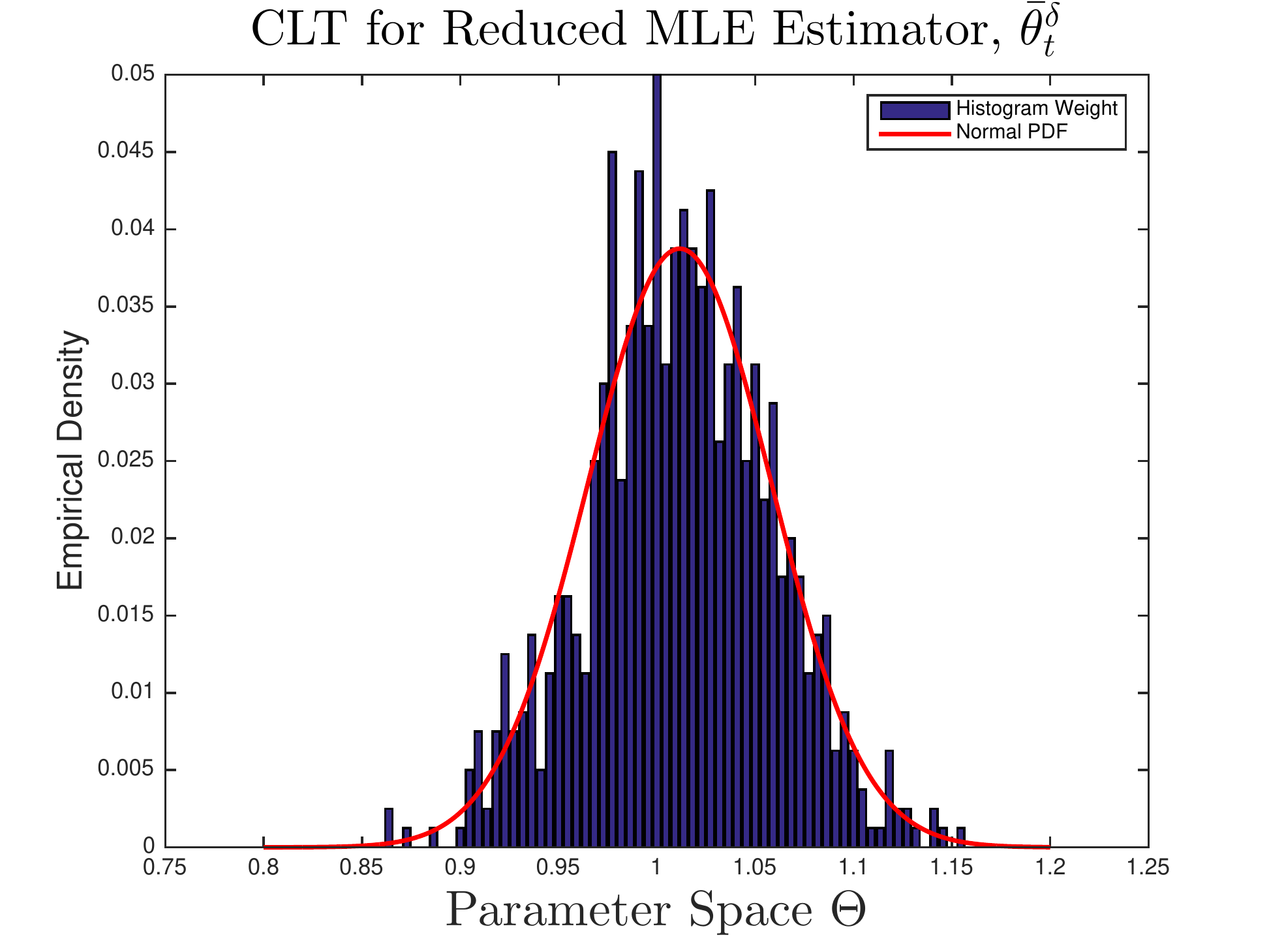}\\
\multicolumn{2}{c}{\includegraphics[width=3in]{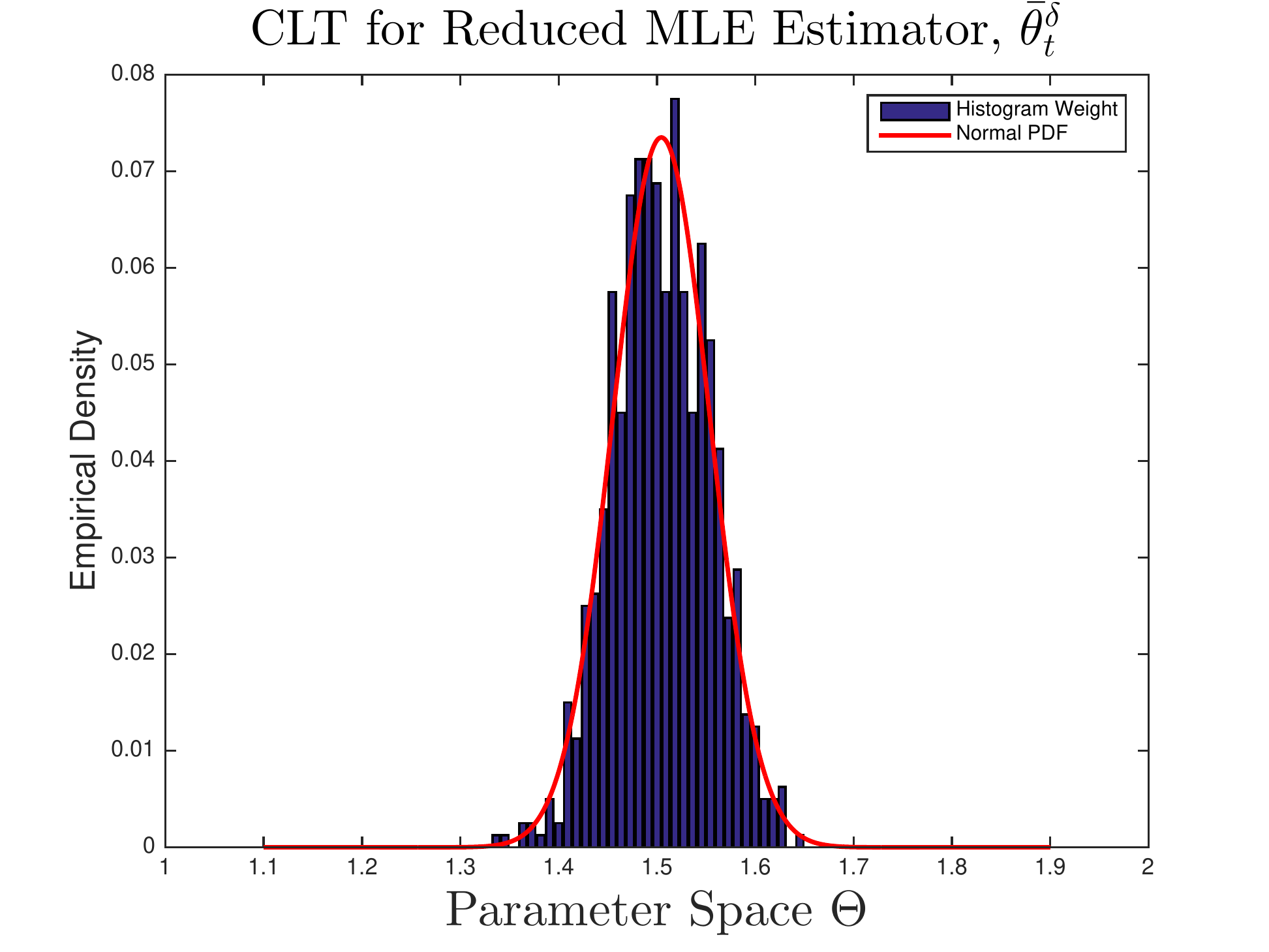}}
\end{tabular}
\caption{\textbf{Top Left:} $\alpha=0$. \textbf{Top Right:} $\alpha=1$. \textbf{Bottom:} $\alpha=1.5$.}
\label{fig:example1hists}
\end{figure}


\subsection{Simulation Example 2} Let the processes be scalars $(Y^\delta,U^\delta,X^\delta)\in\mathbb R\times\mathbb R\times\mathbb R$, and consider the following example of the system in \eqref{Eq:ModelExamplePrelimit}:
\begin{eqnarray}
dY^{\delta}_{t}&=&U^{\delta}_{t}dt +  \Sigma dW_{t}\nonumber\hspace{4.0cm}\hbox{(observed)}\\
dU^{\delta}_{t}&=& -e^{X^{\delta}_{t}} U^{\delta}_{t}dt + dV_{t}\nonumber\hspace{3.8cm}\hbox{(hidden)}\\
dX^{\delta}_{t}&=& \frac{1}{\delta}\left(\theta-X^{\delta}_{t}\right)dt + \frac{\sigma}{\sqrt{\delta}} dB_{t}\label{Eq:ModelExamplePrelimitSpecific2}\hspace{2.5cm}\hbox{(hidden).}
\end{eqnarray}
A key feature of this example is that the process $X^\delta$ affects the mean reversion rate of the $U^\delta$ process.  Figure \ref{fig:example2data} shows a realization of the system for a parameterization having $T=25$, $\delta=0.01$, $\Sigma=\sigma=0.1$, along with a true $\alpha=0$ and discrete time step $\Delta t=.02$.

\begin{figure}[htbp] 
   \centering
   \includegraphics[width=4in]{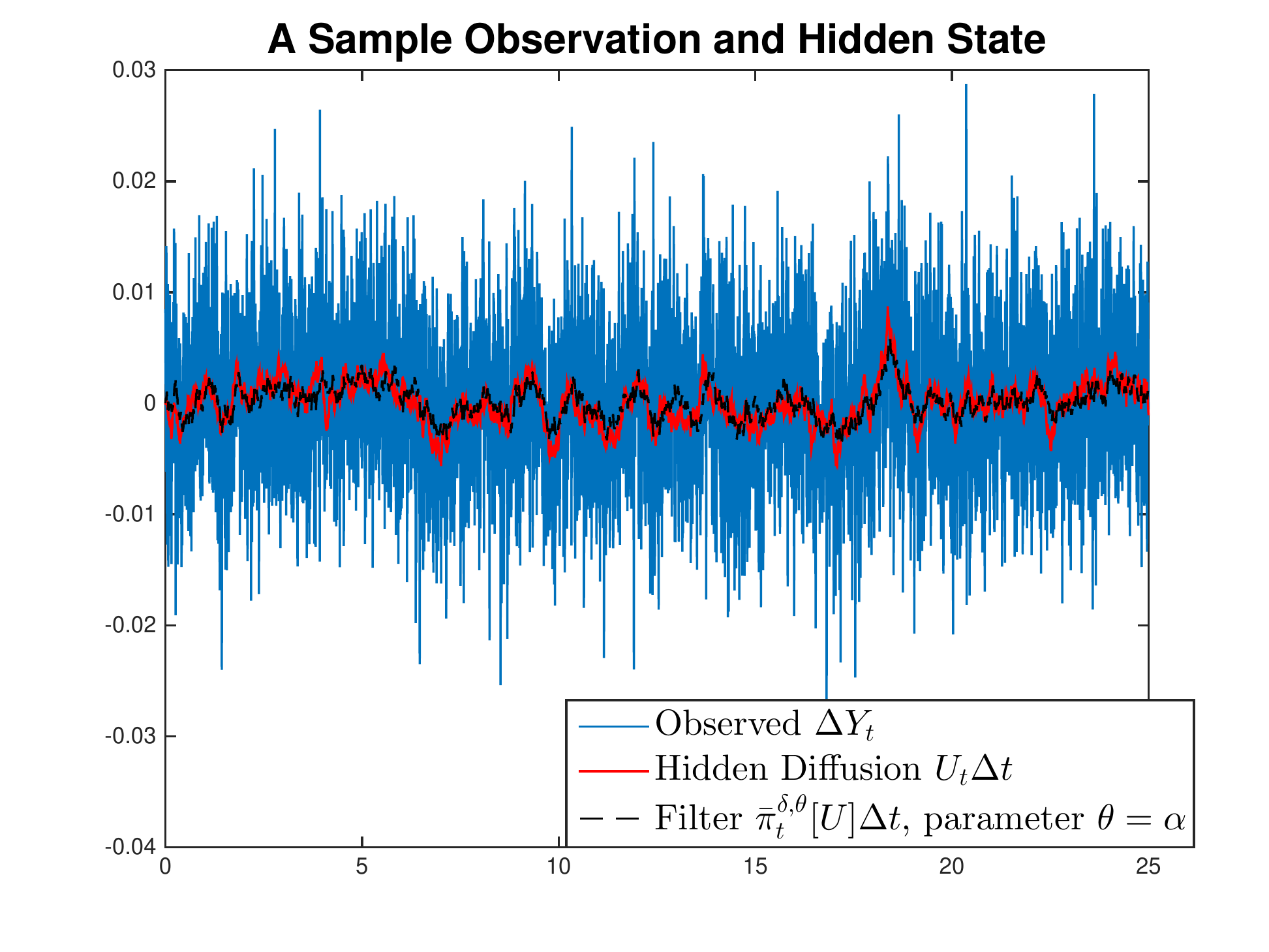}
   \caption{The sample realized from the system given in \eqref{Eq:ModelExamplePrelimitSpecific2} with true parameter $\alpha = 0$.}
   \label{fig:example2data}
\end{figure}

As in the case of Example 1,  the invariant measure $\mu_{\theta}(dx)$ of the $X$ process with $\delta=1$ is that corresponding to $N(\theta,\sigma^{2}/2)$. Notice also that we can compute the Fisher information from \eqref{Eq:FisherInformation} in closed form and obtain
\begin{align*}
I(\alpha)&=\frac{e^{\alpha+\frac{\sigma^2}{4}}}{2}+ \frac{e^{4\alpha+\sigma^2}}{2\left(1/\Sigma^2+e^{2\alpha+\frac{\sigma^2}{2}}\right)^{3/2}}-2\frac{e^{3\alpha+\frac{3\sigma^2}{4}}}{\sqrt{1/\Sigma^2+e^{2\alpha+\frac{\sigma^2}{2}}}\left(e^{\alpha+\frac{\sigma^2}{4}}+\sqrt{1/\Sigma^2+e^{2\alpha+\frac{\sigma^2}{2}}}\right)}
\end{align*}

\begin{table}
\centering
\begin{tabular}{|c|c|c|c|}
\multicolumn{4}{c}{Statistics for different values of the true parameter for $\theta$.}\\
\hline
$\theta$& estimator &empirical std-err.& theoretical std.err\\
\hline
0.5&0.5396&0.2385& 0.2303\\
1&1.0268&0.1943& 0.1917\\
1.5&1.5346&0.1815& 0.1734\\
\hline
\end{tabular}
\vspace{.2cm}
\caption{Model \eqref{Eq:ModelExamplePrelimitSpecific2}, 500 simulations  computed with $T=25$, $\delta=0.01$, $\Sigma=\sigma=0.1$. This table shows the estimator, the empirical standard error and the standard error predicted by Theorem \ref{T:CLTReducedLikelihood}.}
\label{T:CLTvar2}
\end{table}
Table \ref{T:CLTvar2} shows the estimator's standard deviation and the theoretical prediction for various values of the true parameter. In Figure \ref{fig:example2hists} we present the histograms for the three different cases of true value of the $\alpha$ parameter, together with the fitted theoretical normal curve as this is given by Theorem \ref{T:CLTReducedLikelihood}.

\begin{figure}[htbp]
\centering
\begin{tabular}{cc}
	\includegraphics[width=3in]{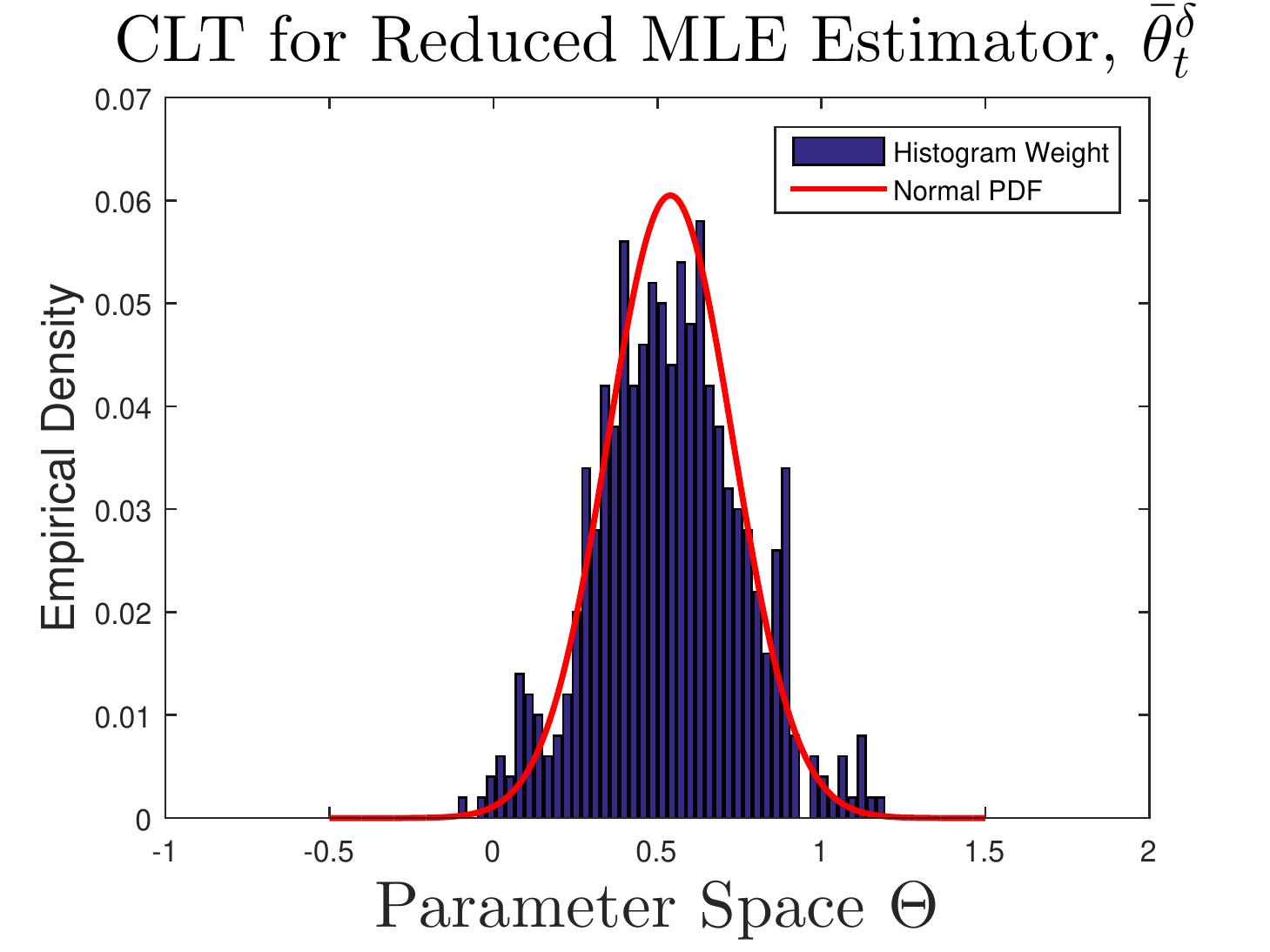}&
	\includegraphics[width=3in]{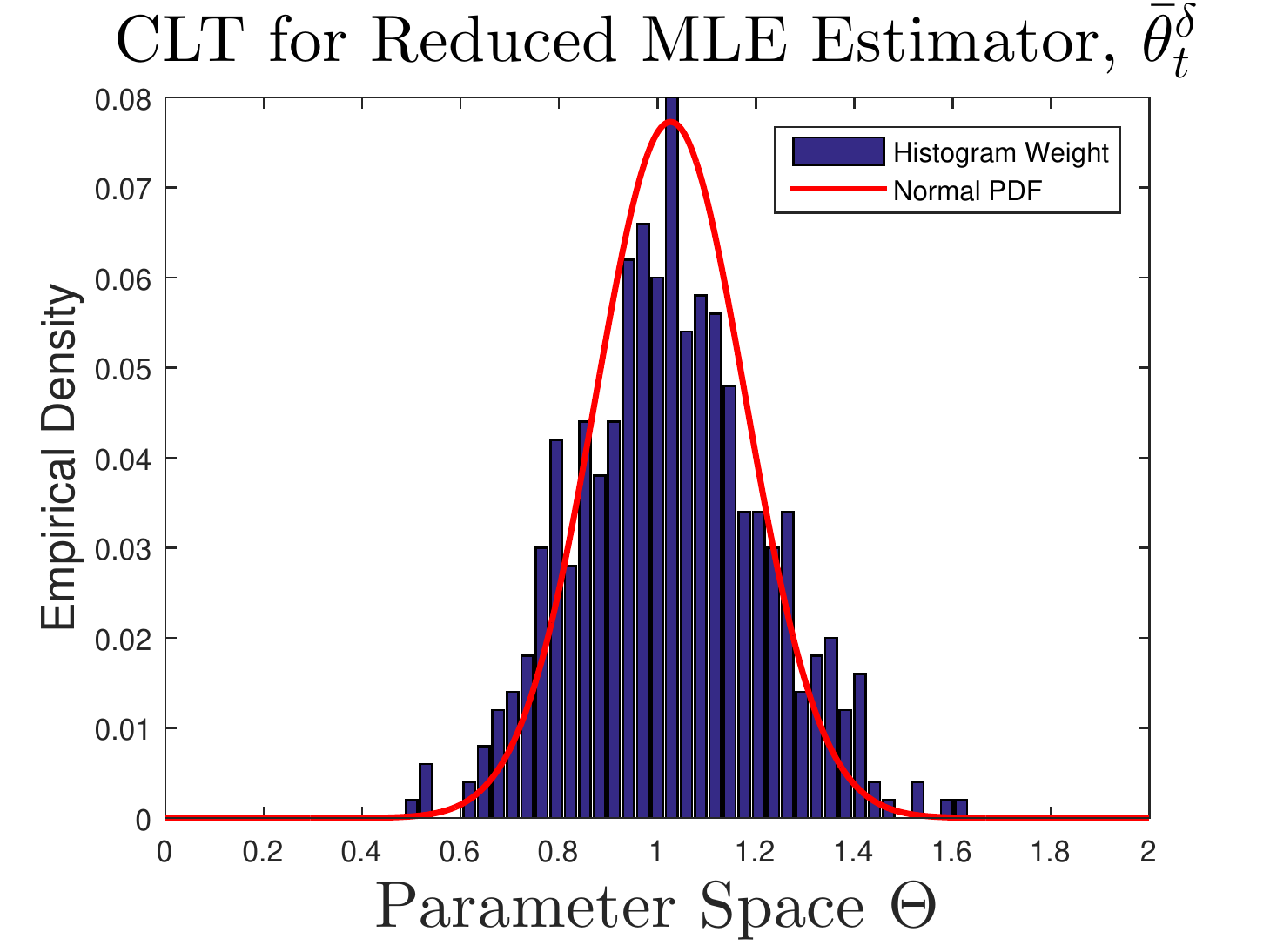}\\
\multicolumn{2}{c}{\includegraphics[width=3in]{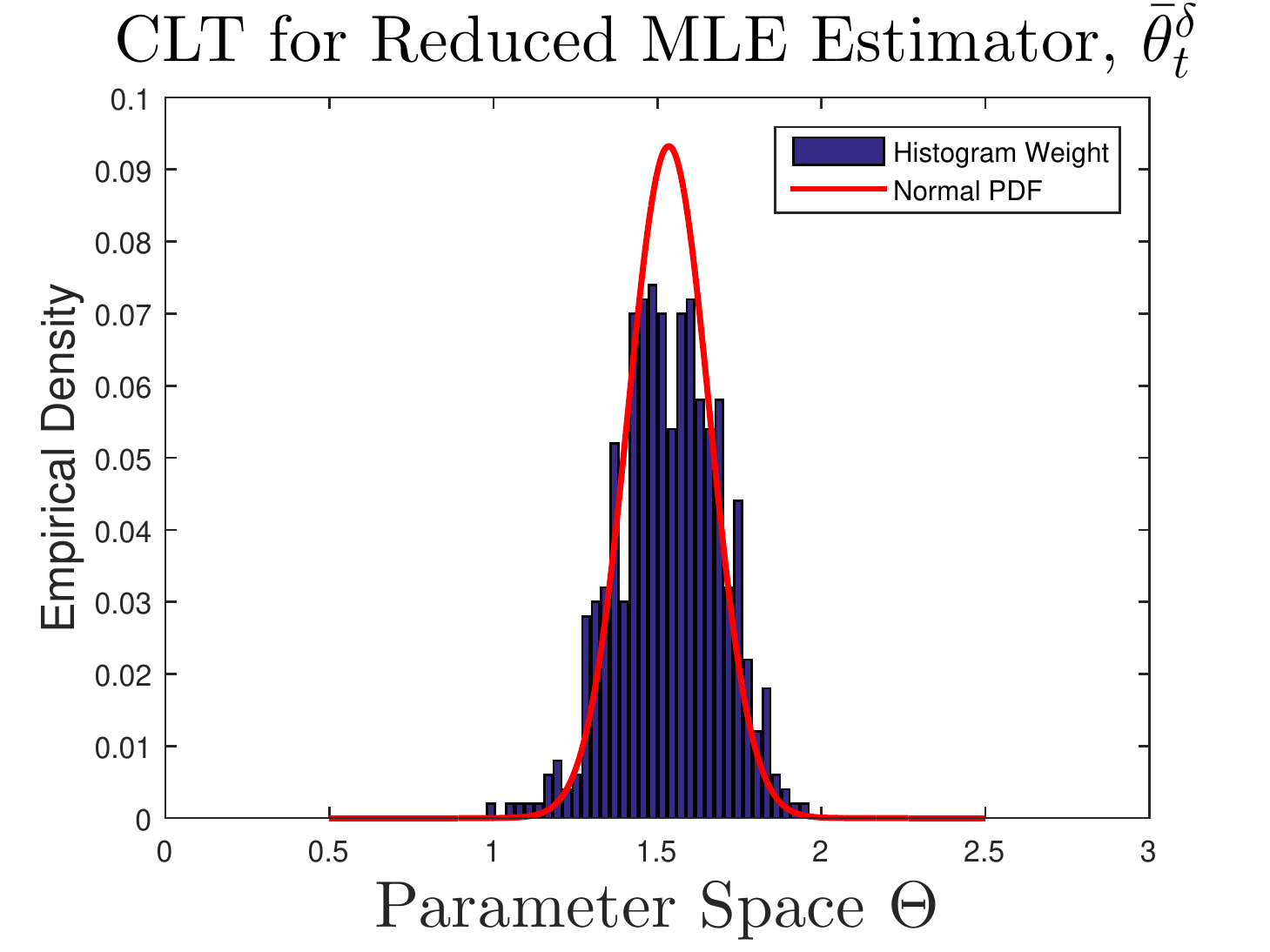}}
\end{tabular}
\caption{\textbf{Top Left:} $\alpha=0$. \textbf{Top Right:} $\alpha=1$. \textbf{Bottom:} $\alpha=1.5$.}
\label{fig:example2hists}
\end{figure}

\subsection{Simulation Example 3} Let the processes be scalars $(Y^\delta,U^\delta,X^\delta)\in\mathbb R\times\mathbb R\times\mathbb R$, and consider the following system:

\begin{eqnarray}
dY^{\delta}_{t}&=& X^{\delta}_{t}dt + \Sigma dW_{t}\nonumber\hspace{4.0cm}\hbox{(observed)}\\
dX^{\delta}_{t}&=& \frac{1}{\delta}\left( U_{t}-X^{\delta}_{t}\right)dt + \frac{\sigma}{\sqrt{\delta}} dB_{t}\label{Eq:ModelExamplePrelimitSpecific3}\hspace{2.5cm}\hbox{(hidden),}
\end{eqnarray}
where $U$ is a continuous time Markov chain taking values in $\{0,1\}$ with transition intensity $\theta>0$, that is
 \begin{align*}
 \frac{d}{dt}
 \begin{pmatrix}\mathbb P(U_t = 0)\\\mathbb P(U_t = 1)\end{pmatrix}
  &= \theta\begin{pmatrix}-1&1\\1&-1\end{pmatrix}
   \begin{pmatrix}\mathbb P(U_t = 0)\\\mathbb P(U_t = 1)\end{pmatrix}\ .
\end{align*}
This example is different from the system in \eqref{Eq:ModelExamplePrelimit} because the $U$ process is not a diffusion; it is comparable to the model considered in \cite{ParkRozovskySowers2010}. Indeed, $U_{t}$ is a discrete space Markov chain and not a continuous diffusion, and so the theory of this paper does not apply, but we conjecture that things can be worked out to find analogous results. Figure \ref{fig:example3data} shows a realization and the filter for this example.
\begin{figure}[htbp] 
   \centering
   \includegraphics[width=4in]{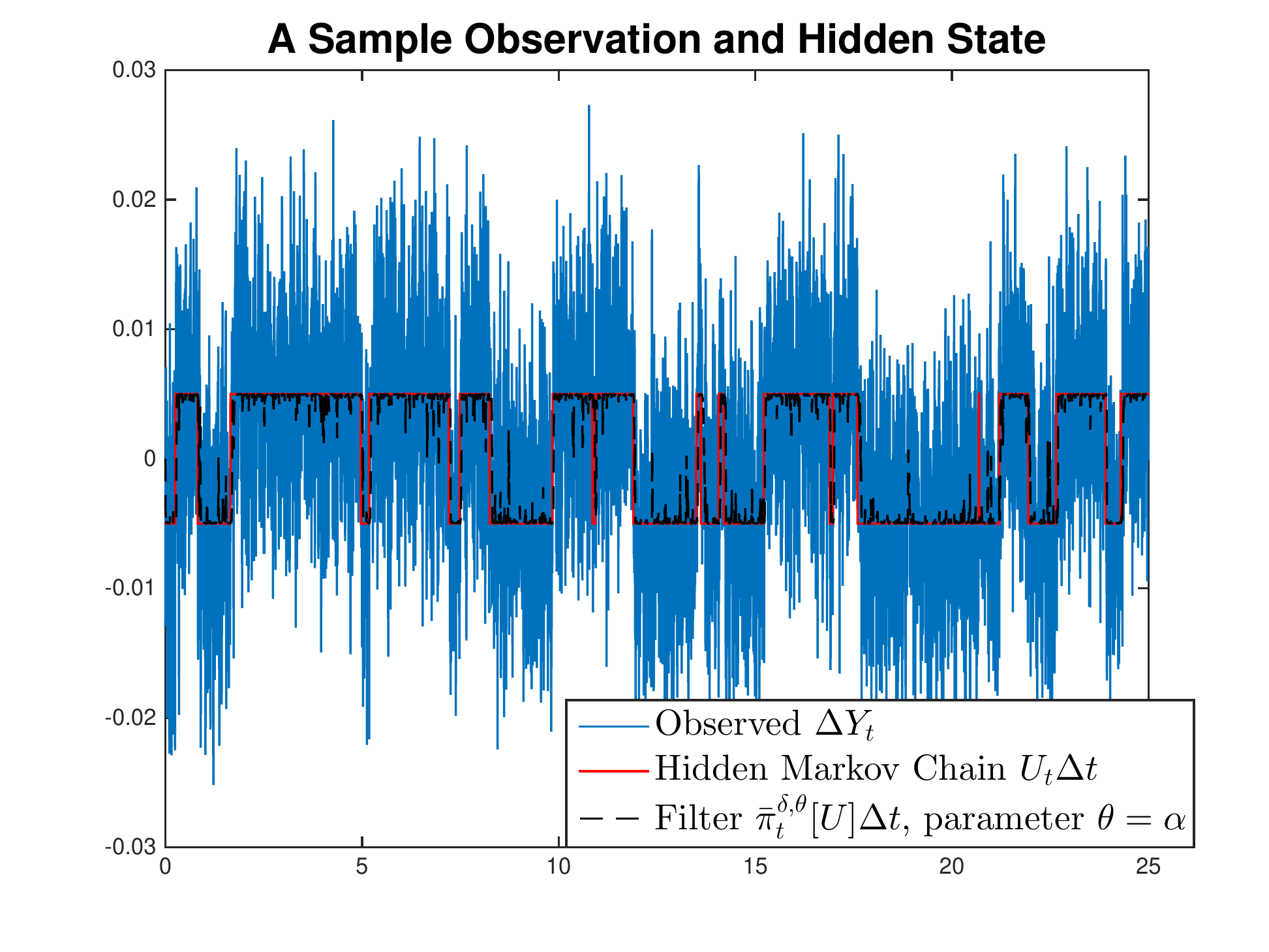}
   \caption{The sample realized from the system given in \eqref{Eq:ModelExamplePrelimitSpecific3} with true parameter $\alpha = 0.7$.}
   \label{fig:example3data}
\end{figure}
Table \ref{T:CLTvar3} shows the estimator's standard deviation and the theoretical prediction for various values of the true parameter, but in this case we have no close-form expression for the Fisher information, so instead we have calculated it numerically.
\begin{table}
\centering
\begin{tabular}{|c|c|c|c|}
\multicolumn{4}{c}{Statistics for different values of the true parameter for $\theta$.}\\
\hline
$\theta$& estimator &empirical std-err.& numerically calculated std.err\\
\hline
0.7&0.7349&0.2305&0.1917\\
1&1.0469&0.2697&0.2253\\
1.8&1.8990&0.3968&0.3058\\
\hline
\end{tabular}
\vspace{.2cm}
\caption{Model \eqref{Eq:ModelExamplePrelimitSpecific3}, 500 simulations  computed with $T=25$, $\delta=0.01$, $\Sigma=\sigma=0.1$. This table shows the estimator and the empirical standard error.}
\label{T:CLTvar3}
\end{table}
Then, in Figures \ref{fig:example3hists} we present the histograms for the three different cases of true value of the $\alpha$ parameter, together with the fitted empirical normal curve. Since, out theory does not cover this case, we cannot provide the theoretical variance of the estimator. However, as the numerical simulations indicate, a central limit theorem is expected to hold.

\begin{figure}[htbp]
\centering
\begin{tabular}{cc}
	\includegraphics[width=3in]{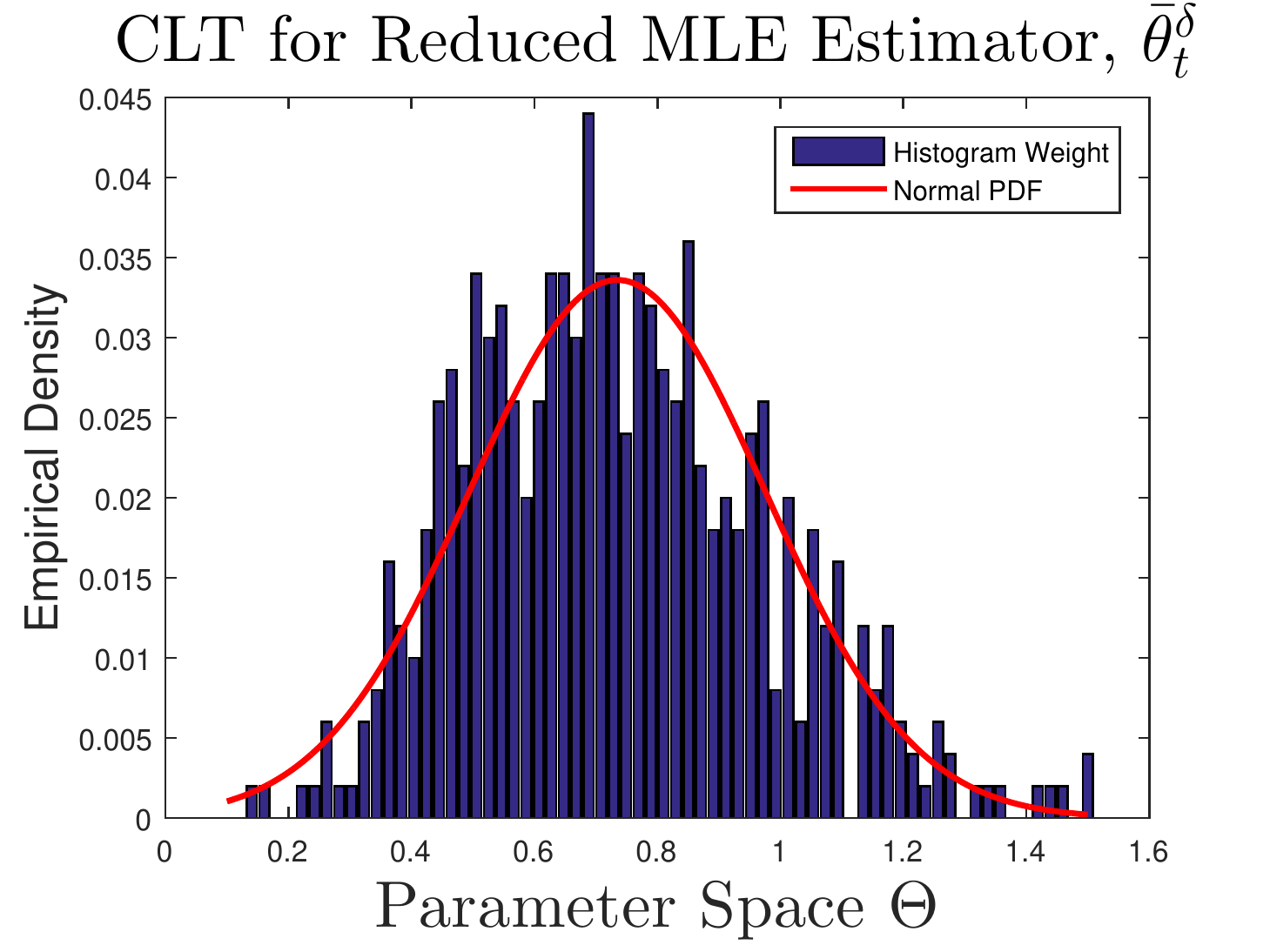}&
	\includegraphics[width=3in]{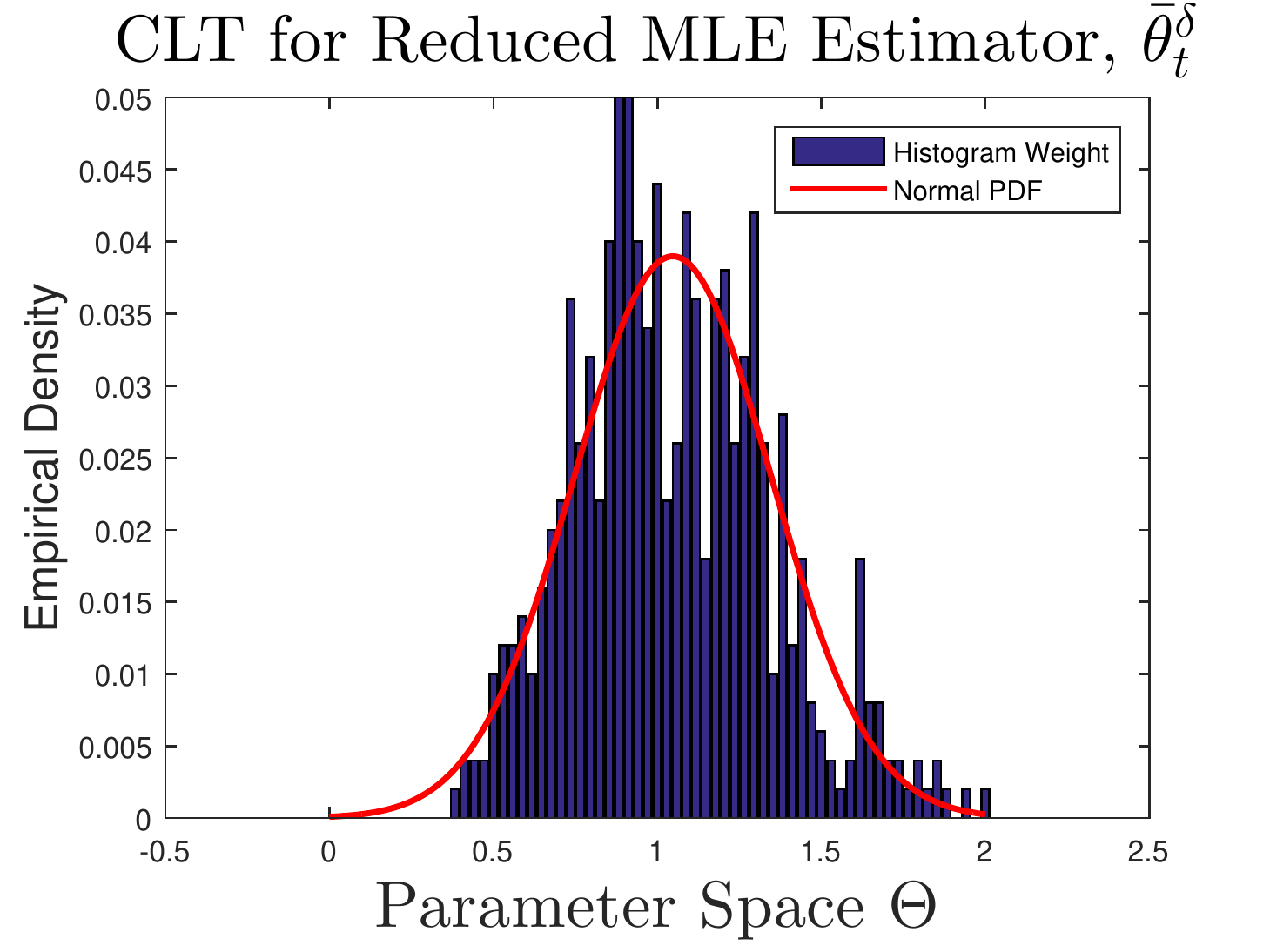}\\
\multicolumn{2}{c}{\includegraphics[width=3in]{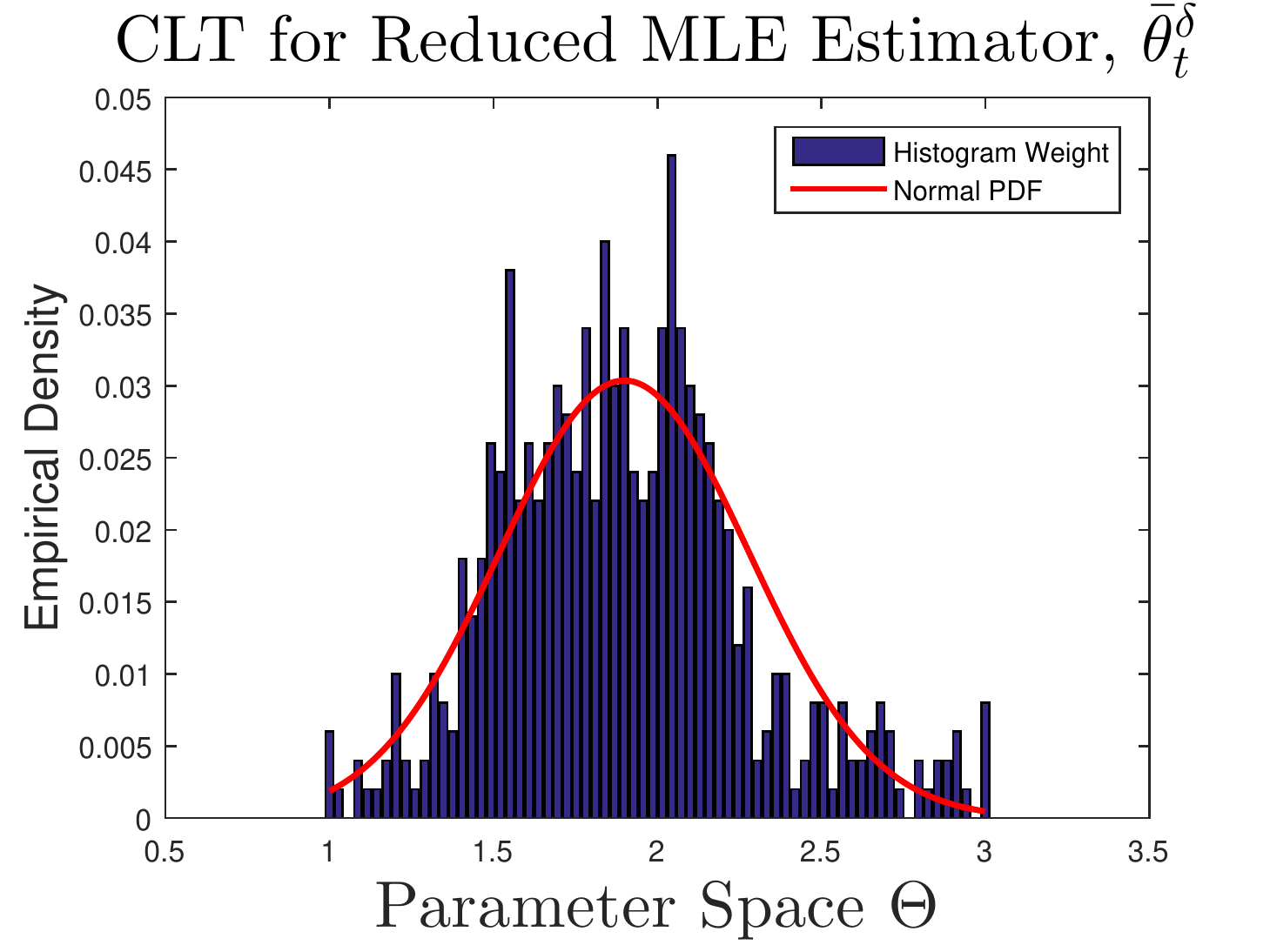}}
\end{tabular}
\caption{\textbf{Top Left:} $\alpha=.7$. \textbf{Top Right:} $\alpha=1$. \textbf{Bottom:} $\alpha=1.8$.}
\label{fig:example3hists}
\end{figure}


\appendix

\section{Proof of Theorem \ref{T:FilterConvergence1}}
\label{A:FilterConvergence1}

Versions of Theorem \ref{T:FilterConvergence1} in simpler settings have appeared in the literature in  \cite{ImkellerSriPerkowskiYeong2012, ParkSriSowers2008, ParkRozovskySowers2010,ParkSriSowers2011}. The first main difference that Theorem \ref{T:FilterConvergence1} has when compared to the previous works is that
under the measure parameterized  by the true parameter value (i.e. the measure under which the observations are made) the filters will converge for \textit{any} parameter value.  Moreover, the second main difference  is that we need to prove that the convergence of the filters is for test functions in the space the space $\mathcal{A}_{\eta}^{\theta}$, whereas the results in \cite{ImkellerSriPerkowskiYeong2012} use bounded and smooth test functions that depend only on the slow motion $U$.

\begin{lemma}\label{L:NeededErgodicResult}
Assume Condition \ref{A:Assumption1}. Let us consider a copy of $(\widetilde{X}^{\delta},\widetilde U^\delta,\bar{\widetilde{U}})$, which has the same law as $(X^{\delta},U^\delta,\bar U)$, but which is independent of $(X^{\delta},U^\delta,\bar U)$. Then, we have
\begin{align}
\lim_{\delta\downarrow 0}\mathbb E_\theta^*\left|\exp\left(\int_0^t h_{\theta}(\tilde X_s^\delta,\tilde U_s^\delta)h_{\theta}(X_s^\delta, U_s^\delta)ds\right)-\exp\left(\int_0^th_{\theta}(\tilde{X}_s^\delta, \tilde{U}_s^\delta) \bar h_\theta(\bar  U_s)ds\right)\right|&=0\nonumber
\end{align}
and
\begin{align}
\lim_{\delta\downarrow 0}\mathbb E_\theta^*\left|\exp\left(\int_0^t\bar h_\theta(\bar U_s)\bar h_\theta(\bar{\widetilde{U}}_s)ds\right)- \exp\left(\int_0^t h(X_s^\delta, U_s^\delta) \bar h_\theta(\bar{\widetilde{U}}_s)ds\right)\right|&=0.\nonumber
\end{align}
\end{lemma}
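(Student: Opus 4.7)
The plan is to exploit the uniform boundedness of $h_\theta$ from Condition \ref{A:Assumption1}(vi) to linearize the two exponentials, and then to reduce the convergence to a classical multiscale averaging statement via a Poisson-equation corrector argument. Setting $M=\sup_{\theta,x,u}|h_\theta(x,u)|$, the exponents in each displayed expectation are bounded in absolute value by $M^2 t$, so the mean-value inequality $|e^a-e^b|\leq e^{M^2 t}|a-b|$ reduces the first displayed limit to showing
\begin{equation*}
\lim_{\delta\downarrow 0}\mathbb{E}_\theta^{*}\left|\int_0^t h_\theta(\tilde X^\delta_s,\tilde U^\delta_s)\bigl[h_\theta(X^\delta_s,U^\delta_s)-\bar h_\theta(\bar U_s)\bigr]\,ds\right|=0,
\end{equation*}
and by the same argument (with the two independent copies swapped) the second limit reduces to the analogous statement with the weight $\bar h_\theta(\bar{\widetilde U}_s)$ in place of $h_\theta(\tilde X^\delta_s,\tilde U^\delta_s)$.

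Next I would split
\begin{equation*}
h_\theta(X^\delta_s,U^\delta_s)-\bar h_\theta(\bar U_s) = \bigl[h_\theta(X^\delta_s,U^\delta_s)-\bar h_\theta(U^\delta_s)\bigr]+\bigl[\bar h_\theta(U^\delta_s)-\bar h_\theta(\bar U_s)\bigr].
\end{equation*}
The second bracket is handled by Lipschitz regularity of $\bar h_\theta$ (inherited from Condition \ref{A:Assumption1}(vi)) together with the coupled homogenization convergence $U^\delta\Rightarrow\bar U$ guaranteed by \cite{PardouxVeretennikov2}; the bounded weight $h_\theta(\tilde X^\delta,\tilde U^\delta)$ then permits dominated convergence after conditioning on the tilde copy. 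For the first bracket, let $\Phi_\theta(x,u)$ be the centered corrector solving the cell Poisson equation $\LF_{\theta,u}\Phi_\theta(x,u)=h_\theta(x,u)-\bar h_\theta(u)$; existence, regularity, and polynomial-growth bounds on $\Phi_\theta$ and its derivatives follow from \cite{PardouxVeretennikov1} under Condition \ref{A:Assumption1}(i)--(iii),(vi). Itô's formula applied to $\delta\,\Phi_\theta(X^\delta_s,U^\delta_s)$ then yields
\begin{equation*}
\int_0^t\bigl[h_\theta(X^\delta_s,U^\delta_s)-\bar h_\theta(U^\delta_s)\bigr]ds = \delta\bigl[\Phi_\theta(X^\delta_t,U^\delta_t)-\Phi_\theta(X^\delta_0,U^\delta_0)\bigr] - \delta\int_0^t\LS_{\theta,X^\delta_s}\Phi_\theta\,ds - \sqrt{\delta}\int_0^t D_x\Phi_\theta\,\sigma_\theta\,dB_s,
\end{equation*}
whose right-hand side is of order $\delta+\sqrt{\delta}$ in $L^2$.

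The delicate final step is inserting the bounded but $\delta$-dependent and itself oscillatory weight $h_\theta(\tilde X^\delta_s,\tilde U^\delta_s)$ into this identity. The cleanest rigorous route is to enlarge the state space and regard $(X^\delta,U^\delta,\tilde X^\delta,\tilde U^\delta)$ as a single slow-fast diffusion on $\mathbb{R}^{2(m+k)}$ with decoupled fast components whose joint invariant measure is the product $\mu_\theta(dx;u)\otimes\mu_\theta(d\tilde x;\tilde u)$, and then apply the Pardoux-Veretennikov averaging theorem to the extended system with test function $\psi(x,\tilde x,u,\tilde u)=h_\theta(\tilde x,\tilde u)\bigl[h_\theta(x,u)-\bar h_\theta(u)\bigr]$, whose fast-invariant average vanishes identically in $(u,\tilde u)$. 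The main obstacle is thus to verify that the structural hypotheses of Condition \ref{A:Assumption1} transfer to the doubled system, which is immediate from the product structure and from applying Condition \ref{A:Assumption1} to each copy separately. The second displayed limit in the lemma is then handled by the same extended-system argument with the weight $\bar h_\theta(\bar{\widetilde U}_s)$ replacing $h_\theta(\tilde X^\delta_s,\tilde U^\delta_s)$.
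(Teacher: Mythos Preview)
Your proposal is correct and follows the same overall architecture as the paper's proof: use boundedness of $h_\theta$ to linearize the exponential difference, then reduce everything to the vanishing of the time integral
\[
\Xi_t^\delta=\int_0^t h_\theta(\tilde X^\delta_s,\tilde U^\delta_s)\bigl[h_\theta(X^\delta_s,U^\delta_s)-\bar h_\theta(\bar U_s)\bigr]\,ds.
\]
The paper's own argument is considerably terser at this point: it simply asserts that ``the ergodic theory applies to the joint process $(X,U,\tilde X,\tilde U)$'' and concludes $\mathbb E_\theta^*|\Xi_t^\delta|\to 0$, then upgrades to $L^2$ by dominated convergence using $|\Xi_t^\delta|\leq 2T\|h_\theta\|_\infty^2$. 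In other words, the paper goes straight to the doubled-system averaging that you reach in your final paragraph, without the intermediate splitting or corrector discussion.

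Your route is more explicit and essentially supplies the proof behind the paper's one-line invocation: the Poisson corrector on the enlarged state space is precisely the mechanism that makes ``ergodic theory applies to the joint process'' rigorous. Two minor remarks. First, your It\^o expansion of $\delta\Phi_\theta(X^\delta,U^\delta)$ is missing the $\delta\int_0^t D_u\Phi_\theta\cdot\tau_\theta\,dV_s$ term coming from the $U^\delta$-diffusion; it is $O(\delta)$ in $L^2$ and harmless, but should be displayed. Second, the detour through the unweighted corrector identity is slightly redundant, since once you pass to the doubled system with test function $\psi(x,\tilde x,u,\tilde u)=h_\theta(\tilde x,\tilde u)[h_\theta(x,u)-\bar h_\theta(u)]$ (whose fast average vanishes, as you correctly observe), the weighted first bracket is handled directly; the splitting into the two brackets and the separate treatment of $\bar h_\theta(U^\delta)-\bar h_\theta(\bar U)$ is still needed, though. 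What your approach buys is a transparent quantitative rate ($O(\sqrt\delta)$ in $L^2$) that the paper's soft ergodic appeal does not provide.
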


\begin{proof}
We will only prove the first statement as the proof of the second statement is the same. The convexity inequality $|e^a-e^b|\leq (e^a+e^b)|a-b|$ is used to obtain
\begin{align}
&\mathbb E_\theta^*\left|\exp\left(\int_0^t h_{\theta}(\tilde X_s^\delta,\tilde U_s^\delta)h_{\theta}(X_s^\delta, U_s^\delta)ds\right)-\exp\left(\int_0^th_{\theta}(\tilde{X}_s^\delta, \tilde{U}_s^\delta) \bar h_\theta(\bar  U_s)ds\right)\right|\nonumber\\
&\leq\mathbb E_\theta^*\left(\exp\left(\int_0^t h_{\theta}(\tilde X_s^\delta,\tilde U_s^\delta)h_{\theta}(X_s^\delta, U_s^\delta)ds\right)+\exp\left(\int_0^th_{\theta}(\tilde{X}_s^\delta, \tilde{U}_s^\delta) \bar h_\theta(\bar  U_s)ds\right)\right)|\Xi_{t}^{\delta}|\nonumber\\
&\leq 2e^{T\|h_\theta\|_\infty^2}\mathbb E_\theta^*|\Xi_{t}^{\delta}|\ ,\nonumber
\end{align}
where we have defined the process
\begin{align*}
\Xi_{t}^{\delta}&=\int_0^th_{\theta}(\tilde X_s^\delta, \tilde U_s^\delta) \Big(h_{\theta}( X_s^\delta, U_s^\delta)-\bar h_\theta(\bar  U_s)\Big)ds\ .
\end{align*}
It remains to show that this term goes to zero. By the dominated convergence theorem and the independence of pairs $(X,U)$ and $(\tilde{X},\tilde{U})$, the ergodic theory applies to the joint process $(X,U,\tilde{X},\tilde{U})$, and in particular we see that the limit of $\Xi$ is
\begin{align}
\mathbb E_\theta^* |\Xi_{t}^{\delta}|&= \mathbb E_\theta^* \left|\int_0^t h_{\theta}(\tilde X_s^\delta, \tilde U_s^\delta) \Big(h_{\theta}( X_s^\delta, U_s^\delta)-\bar h_\theta(\bar  U_s)\Big)ds\right|\rightarrow 0, \text{ as }\delta\downarrow 0\ .\nonumber
\end{align}
This proves convergence in $L^{1}$. Convergence in $L^{2}$ follows again from dominated convergence theorem, since $|\Xi_{t}^{\delta}|^{2}\leq 2T^{2}\|h_{\theta}\|^{2}_{\infty}$, concluding the proof of the lemma.
\end{proof}

\begin{lemma}
 \label{L:FilterConvergence3}
Let us consider bounded $f:\mathcal X\times\mathcal U\rightarrow\mathbb R$ and assume Condition \ref{A:Assumption1}. For any $\theta\in\Theta$, we have uniformly in $t\in[0,T]$
\[\mathbb E_\theta^*\left|\phi_t^{\delta,\theta}[f]-\phi_t^{\delta,\theta}[\bar f_\theta]\right|^{2}\rightarrow 0\qquad\hbox{as }\delta\rightarrow0\ ,\]
\end{lemma}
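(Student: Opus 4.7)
Setting $g := f - \bar f_\theta$, which is bounded (both $f$ and $\bar f_\theta$ are bounded) and satisfies $\int g(x,u)\,\mu_\theta(dx;u)=0$ for every $u$, the goal is $\mathbb E_\theta^*|\phi_t^{\delta,\theta}[g]|^2\to 0$ uniformly in $t\in[0,T]$. The plan is to double the sample space: let $(\tilde X^\delta,\tilde U^\delta,\bar{\tilde U})$ be the independent copy of $(X^\delta,U^\delta,\bar U)$ of Lemma~\ref{L:NeededErgodicResult}, and let $\tilde Z_t^{\delta,\theta}$ be the analogue of $Z_t^{\delta,\theta}$ formed from $(\tilde X^\delta,\tilde U^\delta)$. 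Fubini gives $\mathbb E_\theta^*|\phi_t^{\delta,\theta}[g]|^2 = \mathbb E_\theta^*[Z_t^{\delta,\theta}\tilde Z_t^{\delta,\theta}\,g(X_t^\delta,U_t^\delta)\,g(\tilde X_t^\delta,\tilde U_t^\delta)]$. Since $Y^\delta$ is a $\mathbb P_\theta^*$-Brownian motion independent of all signals, completing the square in $\log Z_t^{\delta,\theta}+\log\tilde Z_t^{\delta,\theta}$ and integrating out $Y^\delta$ yields
$$\mathbb E_\theta^*|\phi_t^{\delta,\theta}[g]|^2 = \mathbb E_\theta^*\!\left[\Psi_t^\delta\,g(X_t^\delta,U_t^\delta)\,g(\tilde X_t^\delta,\tilde U_t^\delta)\right], \quad \Psi_t^\delta := \exp\!\Big(\!\int_0^t\! h_\theta(X_s^\delta,U_s^\delta)\cdot h_\theta(\tilde X_s^\delta,\tilde U_s^\delta)\,ds\Big).$$

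Next I would apply Lemma~\ref{L:NeededErgodicResult} twice (first replacing $h_\theta(X^\delta,U^\delta)$ by $\bar h_\theta(\bar U)$ in the exponent, then $h_\theta(\tilde X^\delta,\tilde U^\delta)$ by $\bar h_\theta(\bar{\tilde U})$, using the symmetry of the two copies) to obtain $\mathbb E_\theta^*|\Psi_t^\delta - \bar\Psi_t|\to 0$, with $\bar\Psi_t := \exp(\int_0^t \bar h_\theta(\bar U_s)\bar h_\theta(\bar{\tilde U}_s)\,ds)$. Since $\|g\|_\infty \le 2\|f\|_\infty$, H\"{o}lder bounds the replacement error by $\|g\|_\infty^2\,\mathbb E_\theta^*|\Psi_t^\delta-\bar\Psi_t|\to 0$, so it suffices to prove $\mathbb E_\theta^*[\bar\Psi_t\,g(X_t^\delta,U_t^\delta)\,g(\tilde X_t^\delta,\tilde U_t^\delta)]\to 0$. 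Because $\bar\Psi_t$ is $\sigma(\bar U,\bar{\tilde U})$-measurable and the two triples $(X^\delta,U^\delta,\bar U)$ and $(\tilde X^\delta,\tilde U^\delta,\bar{\tilde U})$ are independent by construction, conditioning on $(\bar U,\bar{\tilde U})$ and using conditional independence factors this as $\mathbb E[\bar\Psi_t\,G^\delta(\bar U)\,\tilde G^\delta(\bar{\tilde U})]$, with $G^\delta(\bar U):=\mathbb E[g(X_t^\delta,U_t^\delta)\mid\bar U]$.

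The remaining task is $G^\delta\to 0$, which uses the centering $\bar g_\theta=0$ together with the $1/\delta$-fast ergodicity of $X^\delta$ granted by Condition~\ref{A:Assumption1}. I would solve the Poisson equation $\mathcal L^F_{\theta,u}\chi(x,u)=-g(x,u)$ (centered solution of polynomial growth by Pardoux--Veretennikov), apply It\^o's formula to $\delta\,\chi(X_t^\delta,U_t^\delta)$, and thereby express $\int_0^t g(X_s^\delta,U_s^\delta)\,ds$ as $O(\delta)$ exact differences plus $O(\sqrt\delta)$ martingale increments, uniformly in $t\le T$. Converting this integrated estimate into the required pointwise-in-$t$ vanishing of $G^\delta$ is where I expect the main difficulty to lie; the standard route is an integration-by-parts on the Zakai equation for $\phi_t^{\delta,\theta}[\chi]$ which, together with the relation $\phi_t^{\delta,\theta}[g]=-\phi_t^{\delta,\theta}[\mathcal L^F_\theta\chi]$, exhibits $\phi_t^{\delta,\theta}[g]$ as a $\delta$-perturbation of an explicit It\^o differential and yields the pointwise bound. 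Dominated convergence with the uniform bound $\bar\Psi_t\le e^{T\|h_\theta\|_\infty^2}$ then closes the argument, and uniformity in $t\in[0,T]$ propagates from Lemma~\ref{L:NeededErgodicResult} and the $L^2$ bounds throughout.
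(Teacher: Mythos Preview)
Your setup is correct and matches the paper exactly: doubling the signal, integrating out $Y^\delta$ to obtain the signal-only expression with cross-exponent $\Psi_t^\delta$, and invoking Lemma~\ref{L:NeededErgodicResult} to replace the exponent. The gap is precisely where you flag it: showing the pointwise-in-$t$ vanishing of $G^\delta(\bar U)=\mathbb E_\theta^*[g(X_t^\delta,U_t^\delta)\mid\bar U]$. Your two proposed routes do not close it. The Poisson-equation corrector $\chi$ only controls $\int_0^t g(X_s^\delta,U_s^\delta)\,ds$, not $g(X_t^\delta,U_t^\delta)$ at a fixed time. And the ``Zakai integration-by-parts for $\phi_t^{\delta,\theta}[\chi]$'' concerns a conditional expectation given $\mathcal Y_t^\delta$, not given $\bar U$; it is a different object and does not produce a bound on $G^\delta$. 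There is also a structural obstruction to your conditioning scheme: $\bar U$ on $[t-\epsilon,t]$ and $(X^\delta,U^\delta)$ on $[t-\epsilon,t]$ share the $V$-driver, so once you condition on the whole $\bar U$-path you cannot cleanly invoke the Markov property of $(X^\delta,U^\delta)$ to isolate the fast ergodicity.

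The paper avoids this by \emph{not} replacing both $h$-factors. It applies Lemma~\ref{L:NeededErgodicResult} once, replacing only $h_\theta(X^\delta,U^\delta)$ by $\bar h_\theta(\bar U)$ and keeping $h_\theta(\tilde X^\delta,\tilde U^\delta)$ in the exponent. It then truncates the upper limit of the exponent from $t$ to $t-\epsilon$ (uniformly $O(\epsilon)$ error since $h_\theta$ is bounded). The truncated weight is $\mathcal F_{t-\epsilon}^{U^\delta,\bar U}\vee\mathcal F_t^{\tilde U^\delta,\tilde X^\delta}$-measurable, so conditioning on this $\sigma$-algebra pulls the weight out and leaves $\mathbb E_\theta^*[g(X_t^\delta,U_t^\delta)\mid\mathcal F_{t-\epsilon}^{U^\delta,\bar U}]$; by the tower and Markov properties this equals $\mathbb E_\theta^*\big[\mathbb E_\theta^*[g(X_t^\delta,U_t^\delta)\mid X_{t-\epsilon}^\delta,U_{t-\epsilon}^\delta]\,\big|\,\mathcal F_{t-\epsilon}^{U^\delta,\bar U}\big]$, and the inner expectation tends to zero as $\delta\to 0$ for any fixed $\epsilon>0$ by ergodicity of the fast component on a window of length $\epsilon$. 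Sending $\epsilon\to 0$ finishes. This time-shift device is the missing idea in your proposal; once you insert it (and drop the second application of Lemma~\ref{L:NeededErgodicResult} so that the remaining exponent depends on $\bar U$ only through $[0,t-\epsilon]$), your argument becomes complete and coincides with the paper's.
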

where $\bar f_\theta:\mathcal U\rightarrow\mathbb R$ defined as $\bar f_\theta(u) = \int  f(x,u)\mu_\theta(x,u)dx$.
\begin{proof}
Let us consider an independent copy of $(\widetilde{X}^{\delta},\widetilde U^\delta)$, which has the same law as $(X^{\delta},U^\delta)$, but which is independent of $(X^{\delta},U^\delta,W)$. We have
\begin{align}
&\mathbb E_\theta^*\left(\phi_t^{\delta,\theta}[f]-\phi_t^{\delta,\theta}[\bar f_\theta]\right)^2\nonumber\\
&=\mathbb E_\theta^*\left(\phi_t^{\delta,\theta}[f-\bar f_\theta]\right)^2\nonumber\\
&= \mathbb E_\theta^*\left[\mathbb E_\theta^*\left[\left(f(X_t^\delta,U_t^\delta)-\bar f_\theta(U_t^\delta)\right)\exp\left( \int_{0}^th_{\theta}(X^{\delta}_{s},U_s^\delta)dY_s^\delta-\frac{1}{2}\int_{0}^t\left|h_{\theta}(X^{\delta}_{s},U_s^\delta)\right|^{2}ds   \right)\Big|\mathcal Y_t^\delta\right]^2\right]\nonumber\\
&= \mathbb E_\theta^*\Bigg[\mathbb E_\theta^*\Bigg[\Big(f(X_t^\delta,U_t^\delta)-\bar f_\theta(U_t^\delta)\Big)\Big(f(\widetilde X_t^\delta,\widetilde U_t^\delta)-\bar f_\theta(\widetilde U_t^\delta)\Big)\nonumber\\
&\times\exp\left( \int_{0}^t\left(h_{\theta}(X^{\delta}_{s},U_s^\delta)+h_{\theta}(\widetilde X^{\delta}_{s},\widetilde U_s^\delta)\right)dY_s^\delta-\frac{1}{2}\int_{0}^t\left(\left|h_{\theta}(X^{\delta}_{s},U_s^\delta)\right|^{2}+\left|h_{\theta}(\widetilde X^{\delta}_{s},\widetilde U_s^\delta)\right|^{2}\right)ds   \right)\Big|\mathcal Y_t^\delta\Bigg]\Bigg]\nonumber\\
&= \mathbb E_\theta^*\Bigg[\mathbb E_\theta^*\Bigg[\Big(f(X_t^\delta,U_t^\delta)-\bar f_\theta(U_t^\delta)\Big)\Big(f(\widetilde X_t^\delta,\widetilde U_t^\delta)-\bar f_\theta(\widetilde U_t^\delta)\Big)\nonumber\\
&\hspace{2cm}\times\exp\left( \int_{0}^t\left(h_{\theta}(X^{\delta}_{s},U_s^\delta)+h_{\theta}(\widetilde X^{\delta}_{s},\widetilde U_s^\delta)\right)dY_s^\delta\right.\nonumber\\
&\hspace{5cm}\left.-\frac{1}{2}\int_{0}^t\left(\left|h_{\theta}(X^{\delta}_{s},U_s^\delta)\right|^{2}+\left|h_{\theta}(\widetilde X^{\delta}_{s},\widetilde U_s^\delta)\right|^{2}\right)ds   \right)\Big|\mathcal F_t^{U,\tilde U,X,\tilde X}\Bigg]\Bigg]\nonumber\\
&= \mathbb E_\theta^*\Bigg[\Big(f(X_t^\delta,U_t^\delta)-\bar f_\theta(U_t^\delta)\Big)\Big(f(\widetilde X_t^\delta,\widetilde U_t^\delta)-\bar f_\theta(\widetilde U_t^\delta)\Big)\exp\left(\int_{0}^{t}h_{\theta}(X^{\delta}_{s},U_s^\delta)h_{\theta}(\widetilde X^{\delta}_{s},\widetilde U_s^\delta)ds   \right)\Bigg] \nonumber\\
&= \mathbb E_\theta^* \Bigg[\Big(f(X_t^\delta,U_t^\delta)-\bar f_\theta(U_t^\delta)\Big)\Big(f(\widetilde X_t^\delta,\widetilde U_t^\delta)-\bar f_\theta(\widetilde U_t^\delta)\Big)\nonumber\\
&\hspace{1cm}\times\Bigg(\exp\left(\int_{0}^{t}h_{\theta}(X^{\delta}_{s},U_s^\delta)h_{\theta}(\widetilde X^{\delta}_{s},\widetilde U_s^\delta)ds\right)-\exp\left(\int_0^th_{\theta}(\tilde{X}_s^\delta, \tilde{U}_s^\delta) \bar h_\theta(\bar  U_s)ds\right)\Bigg)\Bigg]\nonumber\\
&~~+\mathbb E_\theta^*\Bigg[\Big(f(X_t^\delta,U_t^\delta)-\bar f_\theta(U_t^\delta)\Big)\Big(f(\widetilde X_t^\delta,\widetilde U_t^\delta)-\bar f_\theta(\widetilde U_t^\delta)\Big)\exp\left(\int_0^th_{\theta}(\tilde{X}_s^\delta, \tilde{U}_s^\delta) \bar h_\theta(\bar  U_s)ds\right)\Bigg] \label{Eq:UnnormalizedFilter0}\ .
\end{align}
In the 2nd to last line of the above display, the term goes to zero by Lemma \ref{L:NeededErgodicResult},
\begin{align*}
&\Bigg|\mathbb E_\theta^* \Bigg[\Big(f(X_t^\delta,U_t^\delta)-\bar f_\theta(U_t^\delta)\Big)\Big(f(\widetilde X_t^\delta,\widetilde U_t^\delta)-\bar f_\theta(\widetilde U_t^\delta)\Big)\\
&\hspace{1cm}\times\Bigg(\exp\left(\int_{0}^{t}h_{\theta}(X^{\delta}_{s},U_s^\delta)h_{\theta}(\widetilde X^{\delta}_{s},\widetilde U_s^\delta)ds\right)-\exp\left(\int_0^th_{\theta}(\tilde{X}_s^\delta, \tilde{U}_s^\delta) \bar h_\theta(\bar  U_s)ds\right)\Bigg)\Bigg]\Bigg|\\
&\leq 4\|f\|_\infty^2 \mathbb E_\theta^* \Bigg|\exp\left(\int_{0}^{t}h_{\theta}(X^{\delta}_{s},U_s^\delta)h_{\theta}(\widetilde X^{\delta}_{s},\widetilde U_s^\delta)ds\right)-\exp\left(\int_0^th_{\theta}(\tilde{X}_s^\delta, \tilde{U}_s^\delta) \bar h_\theta(\bar  U_s)ds\right) \Bigg|\\
&\rightarrow 0\ ,
\end{align*}
and the term in the last line of the display (\ref{Eq:UnnormalizedFilter0}) goes to zero as follows,

\begin{align*}
&\Bigg|\mathbb E_\theta^*\Bigg[\Big(f(X_t^\delta,U_t^\delta)-\bar f_\theta(U_t^\delta)\Big)\Big(f(\widetilde X_t^\delta,\widetilde U_t^\delta)-\bar f_\theta(\widetilde U_t^\delta)\Big)\exp\left(\int_0^th_{\theta}(\tilde{X}_s^\delta, \tilde{U}_s^\delta) \bar h_\theta(\bar  U_s)ds\right)\Bigg]\Bigg|\\
&\leq 4\|f\|_\infty^2 \mathbb E_\theta^*\Bigg|\exp\left(\int_0^th_{\theta}(\tilde{X}_s^\delta, \tilde{U}_s^\delta) \bar h_\theta(\bar  U_s)ds\right)-\exp\left(\int_0^{t-\epsilon}h_{\theta}(\tilde{X}_s^\delta, \tilde{U}_s^\delta) \bar h_\theta(\bar  U_s)ds\right)\Bigg|\\
&+\Bigg|\mathbb E_\theta^*\Bigg[\mathbb E_\theta^*\Big[f(X_t^\delta,U_t^\delta)-\bar f_\theta(U_t^\delta)\Big|\mathcal F_{t-\epsilon}^{U^{\delta},\bar U}\vee\mathcal F_t^{\tilde U^{\delta},\tilde X^{\delta}}\Big]\Big(f(\widetilde X_t^\delta,\widetilde U_t^\delta)-\bar f_\theta(\widetilde U_t^\delta)\Big)\exp\left(\int_0^{t-\epsilon}h_{\theta}(\tilde{X}_s^\delta, \tilde{U}_s^\delta) \bar h_\theta(\bar  U_s)ds\right)\Bigg]\Bigg|\\
&\leq 4\|f\|_\infty^2 \mathbb E_\theta^*\Bigg|\exp\left(\int_0^th_{\theta}(\tilde{X}_s^\delta, \tilde{U}_s^\delta) \bar h_\theta(\bar  U_s)ds\right)-\exp\left(\int_0^{t-\epsilon}h_{\theta}(\tilde{X}_s^\delta, \tilde{U}_s^\delta) \bar h_\theta(\bar  U_s)ds\right)\Bigg|\\
&\hspace{2cm}+2\|f\|_\infty \mathbb E_\theta^*\Bigg[\Big|\mathbb E_\theta^*\Big[f(X_t^\delta,U_t^\delta)-\bar f_\theta(U_t^\delta)\Big|\mathcal F_{t-\epsilon}^{U^{\delta},\bar U}\vee\mathcal F_t^{\tilde U^{\delta},\tilde X^{\delta}}\Big]\Big|\exp\left(\int_0^{t-\epsilon}h_{\theta}(\tilde{X}_s^\delta, \tilde{U}_s^\delta) \bar h_\theta(\bar  U_s)ds\right)\Bigg]\\
&= 4\|f\|_\infty^2 \mathbb E_\theta^*\Bigg|\exp\left(\int_0^th_{\theta}(\tilde{X}_s^\delta, \tilde{U}_s^\delta) \bar h_\theta(\bar  U_s)ds\right)-\exp\left(\int_0^{t-\epsilon}h_{\theta}(\tilde{X}_s^\delta, \tilde{U}_s^\delta) \bar h_\theta(\bar  U_s)ds\right)\Bigg|\\
&\hspace{2cm}+2\|f\|_\infty \mathbb E_\theta^*\Bigg[\Big|\mathbb E_\theta^*\Big[f(X_t^\delta,U_t^\delta)-\bar f_\theta(U_t^\delta)\Big|\mathcal F_{t-\epsilon}^{U^{\delta},\bar U}\Big]\Big|\exp\left(\int_0^{t-\epsilon}h_{\theta}(\tilde{X}_s^\delta, \tilde{U}_s^\delta) \bar h_\theta(\bar  U_s)ds\right)\Bigg]\\
&\rightarrow 4\|f\|_\infty^2 \mathbb E_\theta^*\Bigg|\exp\left(\int_0^t\bar h_{\theta}(\tilde{\bar U}_s) \bar h_\theta(\bar  U_s)ds\right)-\exp\left(\int_0^{t-\epsilon}\bar h_{\theta}(\tilde{\bar U}_s) \bar h_\theta(\bar  U_s)ds\right)\Bigg|\, \text{ as }\delta\downarrow 0
\end{align*}
where $\epsilon>0$ can be arbitrarily small. The limit is taken as $\delta \to 0$, with the conditional expectation being handled in the following way:
\begin{align*}
&\mathbb E_\theta^*\Big[f(X_t^\delta,U_t^\delta)-\bar f_\theta(U_t^\delta)\Big|\mathcal F_{t-\epsilon}^{U^{\delta},\bar U}\vee\mathcal F_t^{\tilde U^{\delta},\tilde X^{\delta}}\Big]\\
&=\mathbb E_\theta^*\Big[f(X_t^\delta,U_t^\delta)-\bar f_\theta(U_t^\delta)\Big|\mathcal F_{t-\epsilon}^{U^{\delta},\bar U}\Big]\qquad\qquad\hbox{by independence of $(U^{\delta},X^{\delta})$ from $(\tilde U^{\delta},\tilde X^{\delta})$,}\\
&=\mathbb E_\theta^*\Big[\mathbb E_\theta^*\Big[f(X_t^\delta,U_t^\delta)-\bar f_\theta(U_t^\delta)\Big|X^\delta_{t-\epsilon},U^{\delta}_{t-\epsilon}\Big]\Big|\mathcal F_{t-\epsilon}^{U^{\delta},\bar U}\Big]\\
&\rightarrow 0\ ,
\end{align*}
 The last convergence is due to the fact that $\mathbb E_\theta^*\Big[f(X_t^\delta,U_t^\delta)-\bar f_\theta(U_t^\delta)\Big|X_{t-\epsilon}^\delta,U_{t-\epsilon}^\delta\Big]\rightarrow 0$ as $\delta \to 0$ for any $\epsilon>0$ because of the ergodicity implied by Condition \ref{A:Assumption1}. 
Moreover, owing to Condition \ref{A:Assumption1}'s assumptions on $h$ 
\[
\mathbb E_\theta^*\Bigg|\exp\left(\int_0^t\bar h_{\theta}(\tilde{\bar U}_s) \bar h_\theta(\bar  U_s)ds\right)-\exp\left(\int_0^{t-\epsilon}\bar h_{\theta}(\tilde{\bar U}_s) \bar h_\theta(\bar  U_s)ds\right)\Bigg|\rightarrow 0\ ,
\]
as $\epsilon \to 0$ by dominated convergence theorem. Hence, the remaining term is arbitrarily small, and we conclude that all terms converge to zero with $\delta$.
\end{proof}

\begin{lemma}[Lemma 6.7 in \cite{ImkellerSriPerkowskiYeong2012}]\label{L:BoundedZ}
Given that $h_{\theta}$ is bounded, then for any $q\in[1,\infty)$ we have that
\begin{equation*}
 \sup_{t\in[0,T]}\sup_{\delta\in(0,1)}\mathbb{E}_{\theta}^{*}|Z^{\delta,\theta}_{t}|^{q}+
 \sup_{t\in[0,T]}\sup_{\delta\in(0,1)}\mathbb{E}_{\theta}|Z^{\delta,\theta}_{t}|^{-q}<\infty\ .
\end{equation*}
\end{lemma}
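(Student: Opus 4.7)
The plan is to use the standard exponential-martingale decomposition trick twice, once for positive moments under $\mathbb{P}_{\theta}^{*}$ and once for negative moments under $\mathbb{P}_{\theta}$. The boundedness of $h_{\theta}$ (uniform in $\theta$, $x$, $u$) will make every bound independent of $\delta$ and $t\in[0,T]$.

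For the positive moments under $\mathbb{P}_{\theta}^{*}$, recall that under $\mathbb{P}_{\theta}^{*}$ the process $Y^{\delta}$ is a Brownian motion independent of $(X^{\delta},U^{\delta})$. I would write
\[
\bigl|Z^{\delta,\theta}_t\bigr|^{q} = \mathcal{E}_t\!\left(q\,h_{\theta}(X^{\delta}_{\cdot},U^{\delta}_{\cdot})\cdot Y^{\delta}\right)\cdot \exp\!\left(\tfrac{q^{2}-q}{2}\int_{0}^{t}\bigl|h_{\theta}(X^{\delta}_s,U^{\delta}_s)\bigr|^{2}ds\right),
\]
where $\mathcal{E}_t(\cdot)$ denotes the Doléans–Dade exponential associated with the integrand $q h_{\theta}$. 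Because $h_{\theta}$ is bounded, Novikov's condition holds trivially, so $\mathcal{E}_t$ is a true $\mathbb{P}_{\theta}^{*}$-martingale with expectation $1$; the remaining factor is deterministically bounded by $\exp\!\bigl(T(q^{2}-q)\|h_{\theta}\|_{\infty}^{2}/2\bigr)$. Taking $\mathbb{E}_{\theta}^{*}$ gives a bound that depends only on $q$, $T$, and $\|h_{\theta}\|_{\infty}$, hence is uniform in $\delta\in(0,1)$ and $t\in[0,T]$.

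For the negative moments, I would convert them back to $\mathbb{P}_{\theta}^{*}$ using $d\mathbb{P}_{\theta}/d\mathbb{P}_{\theta}^{*}=Z^{\delta,\theta}_t$ on $\mathcal{F}_{t}$, giving
\[
\mathbb{E}_{\theta}\bigl[|Z^{\delta,\theta}_t|^{-q}\bigr] = \mathbb{E}_{\theta}^{*}\bigl[|Z^{\delta,\theta}_t|^{1-q}\bigr].
\]
Then I would run the same decomposition with the integrand $-(q-1)h_{\theta}$ in place of $q h_{\theta}$: the Doléans–Dade exponential is a $\mathbb{P}_{\theta}^{*}$-martingale with mean $1$, and the leftover deterministic factor is again bounded by $\exp\!\bigl(T(q-1)q\|h_{\theta}\|_{\infty}^{2}/2\bigr)$. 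Summing the two parts yields the claim.

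The only place requiring any care is the verification that $Z^{\delta,\theta}$ is a genuine martingale (not merely a local martingale) so that the change of measure above is valid on each $\mathcal{F}_t$; this is immediate since Novikov's condition $\mathbb{E}_{\theta}^{*}\exp\!\bigl(\tfrac{1}{2}\int_{0}^{T}|h_{\theta}|^{2}ds\bigr)\le \exp\!\bigl(T\|h_{\theta}\|_{\infty}^{2}/2\bigr)<\infty$ is trivially satisfied. There is no genuine obstacle: the entire argument reduces to the observation that replacing $q h_{\theta}$ by $-ph_{\theta}$ in the Girsanov density only changes constants in the bound, and the $\delta$-dependence drops out because the law of $Y^{\delta}$ under $\mathbb{P}_{\theta}^{*}$ is that of a standard Brownian motion regardless of $\delta$, while $|h_{\theta}|\le \|h_{\theta}\|_{\infty}$ uniformly in the hidden state.
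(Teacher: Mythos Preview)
Your argument is correct and complete: the Dol\'eans--Dade decomposition plus Novikov's condition is exactly the right tool here, and you have handled both the positive and negative moments cleanly. The paper itself gives no proof of this lemma; it simply quotes the result as Lemma~6.7 of \cite{ImkellerSriPerkowskiYeong2012}, so there is nothing further to compare.
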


\begin{lemma}
 \label{L:FilterConvergence5}
Let us consider bounded $f:\mathcal U\rightarrow\mathbb R$ and assume Condition \ref{A:Assumption1}. For any $\alpha,\theta\in\Theta$, we have uniformly in $t\in[0,T]$
\[\mathbb E_\alpha\left|\pi_t^{\delta,\theta}[f]-\bar\pi_t^{\delta,\theta}[ f]\right|\rightarrow 0\qquad\hbox{as }\delta\rightarrow0\ ,\]
where $\bar \pi_t^{\delta,\theta}[f]=\bar \phi_t^{\delta,\theta}[f]/\bar \pi_t^{\delta,\theta}[1]$ is the averaged filter given by \eqref{Eq:solAvgZakai}.
\end{lemma}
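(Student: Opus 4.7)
The plan is to reduce the desired $L^1(\mathbb{P}_\alpha)$ bound on the normalized filter difference to an $L^2(\mathbb{P}_\theta^*)$ bound on the unnormalized filter difference, and then establish the latter by the same independent-copy / ergodic-replacement technique already used in Lemma \ref{L:FilterConvergence3}. The algebraic starting point is
$$
\pi_t^{\delta,\theta}[f]-\bar{\pi}_t^{\delta,\theta}[f]=\frac{\phi_t^{\delta,\theta}[f]-\bar{\phi}_t^{\delta,\theta}[f]}{\phi_t^{\delta,\theta}[1]}+\bar{\pi}_t^{\delta,\theta}[f]\,\frac{\bar{\phi}_t^{\delta,\theta}[1]-\phi_t^{\delta,\theta}[1]}{\phi_t^{\delta,\theta}[1]},
$$
combined with $|\bar{\pi}_t^{\delta,\theta}[f]|\leq\|f\|_\infty$, which reduces matters to bounding $\mathbb{E}_\alpha[|\phi_t^{\delta,\theta}[g]-\bar{\phi}_t^{\delta,\theta}[g]|/\phi_t^{\delta,\theta}[1]]$ for $g\in\{f,1\}$. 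I then change measure via $\mathbb{E}_\alpha[\,\cdot\,]=\mathbb{E}_\alpha^*[Z_T^{\delta,\alpha}\,\cdot\,]$ and apply H\"older with exponents $(4,4,2)$.

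The key structural observation is that $\phi_t^{\delta,\theta}[g]$, $\bar{\phi}_t^{\delta,\theta}[g]$ and $\phi_t^{\delta,\theta}[1]$ are Borel functionals of the path $(Y^\delta_s)_{s\leq t}$ alone — the $(X^\delta,U^\delta)$ and $\bar U$ ingredients are integrated out against their $\mathbb{P}_\theta^*$-laws in the Kallianpur--Striebel formula — while $Y^\delta$ is a standard Brownian motion under both $\mathbb{P}_\alpha^*$ and $\mathbb{P}_\theta^*$; hence their $L^p(\mathbb{P}_\alpha^*)$- and $L^p(\mathbb{P}_\theta^*)$-norms coincide. This makes the three H\"older factors tractable: $\|Z_T^{\delta,\alpha}\|_{L^4(\mathbb{P}_\alpha^*)}$ is bounded uniformly in $\delta$ by Lemma \ref{L:BoundedZ}, and Jensen's inequality applied to $x\mapsto x^{-q}$ yields
$$
\bigl(1/\phi_t^{\delta,\theta}[1]\bigr)^{q}=\bigl(\mathbb{E}_\theta^*[Z_t^{\delta,\theta}\mid\mathcal{Y}_t^\delta]\bigr)^{-q}\leq \mathbb{E}_\theta^*\bigl[(Z_t^{\delta,\theta})^{-q}\mid\mathcal{Y}_t^\delta\bigr],
$$
so that $\mathbb{E}_\theta^*[(1/\phi_t^{\delta,\theta}[1])^q]\leq\mathbb{E}_\theta^*[(Z_t^{\delta,\theta})^{-q}]$, which is again uniformly bounded, by a Radon--Nikodym rewrite followed by Lemma \ref{L:BoundedZ}. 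All that remains is to show that $\|\phi_t^{\delta,\theta}[g]-\bar{\phi}_t^{\delta,\theta}[g]\|_{L^2(\mathbb{P}_\theta^*)}\to 0$ uniformly in $t\in[0,T]$, for $g\in\{f,1\}$.

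This last step is the main obstacle, because, in contrast with Lemma \ref{L:FilterConvergence3}, one must compare two structurally different measure-valued processes rather than two avatars of the same full filter. I expand $\mathbb{E}_\theta^*[(\phi_t^{\delta,\theta}[g]-\bar{\phi}_t^{\delta,\theta}[g])^2]$ as the difference of the three cross-moments $\mathbb{E}_\theta^*[\phi^2]$, $\mathbb{E}_\theta^*[\phi\bar\phi]$, $\mathbb{E}_\theta^*[\bar\phi^2]$. Introducing an independent copy $(\tilde X^\delta,\tilde U^\delta,\tilde{\bar U})$ of $(X^\delta,U^\delta,\bar U)$, choosing $\bar U$ to be independent of $(X^\delta,U^\delta)$ in the cross-term, and integrating out $Y^\delta$ under $\mathbb{P}_\theta^*$ via the identity $\mathbb{E}_\theta^*[Z_t^{\delta,\theta}\tilde Z_t^{\delta,\theta}\mid(X^\delta,U^\delta,\tilde X^\delta,\tilde U^\delta)]=\exp(\int_0^t h_\theta(X_s^\delta,U_s^\delta)\cdot h_\theta(\tilde X_s^\delta,\tilde U_s^\delta)\,ds)$ and its two analogues, one obtains three expectations with exponential weights $\exp(\int H_1\cdot H_2\,ds)$ indexed by $(H_1,H_2)\in\{(h,h),(h,\bar h),(\bar h,\bar h)\}$. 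Two successive applications of Lemma \ref{L:NeededErgodicResult} (the second obtained by relabelling the two independent copies) replace all three exponential weights, in $L^1(\mathbb{P}_\theta^*)$, by the common $\exp(\int_0^t\bar h_\theta(\bar U_s)\bar h_\theta(\tilde{\bar U}_s)\,ds)$. What remains is the ergodic replacement of $g(U_t^\delta)$ by $g(\bar U_t)$ (and the same for the tilde-copy), which is handled exactly as in the second half of the proof of Lemma \ref{L:FilterConvergence3}: condition on $\mathcal{F}_{t-\epsilon}^{U^\delta,\bar U}\vee\mathcal{F}_t^{\tilde U^\delta,\tilde X^\delta}$, invoke $\mathbb{E}_\theta^*[g(U_t^\delta)-g(\bar U_t)\mid X^\delta_{t-\epsilon},U^\delta_{t-\epsilon}]\to 0$ from ergodicity, and let $\epsilon\downarrow 0$ by dominated convergence. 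The three cross-moments then collapse to a common limit, yielding the desired $L^2$ convergence; uniformity in $t\in[0,T]$ follows because every bound depends only on $\|h_\theta\|_\infty$, $\|g\|_\infty$ and the horizon $T$.
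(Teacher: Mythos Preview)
Your proposal is correct and follows essentially the same approach as the paper: both reduce to showing $\mathbb{E}_\theta^*\bigl(\phi_t^{\delta,\theta}[g]-\bar\phi_t^{\delta,\theta}[g]\bigr)^2\to 0$ via the independent-copy trick, integrate out $Y^\delta$ to obtain the cross-exponentials, invoke Lemma \ref{L:NeededErgodicResult}, then pass to the normalized filters using Jensen on $(\phi_t^{\delta,\theta}[1])^{-q}$ together with Lemma \ref{L:BoundedZ}, and finally change from $\mathbb{P}_\alpha$ to $\mathbb{P}_\theta^*$ via the observation that the filters are functionals of $Y^\delta$ alone. The only cosmetic differences are that the paper puts $\phi_t^{\delta,\theta}[1]\bar\phi_t^{\delta,\theta}[1]$ in the denominator (rather than only $\phi_t^{\delta,\theta}[1]$) and groups the independent-copy expansion into two terms rather than three cross-moments.
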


\begin{proof}
Let us consider an independent copy of $(\widetilde{X}^{\delta},\widetilde U^\delta,\bar{\widetilde{U}})$, which has the same law as $(X^{\delta},U^\delta,\widetilde U)$, but which is independent of $(X^{\delta},U^\delta,\bar U,W)$. We have
\begin{align*}
\mathbb E_\theta^*\left(\phi_t^{\delta,\theta}[f]-\bar\phi_t^{\delta,\theta}[f]\right)^2
&\leq \cancelto{0}{\mathbb E_\theta^*\left(\phi_t^{\delta,\theta}[f]-\phi_t^{\delta,\theta}[\bar f_\theta]\right)^2}+\mathbb E_\theta^*\left(\phi_t^{\delta,\theta}[\bar f_\theta]-\bar\phi_t^{\delta,\theta}[f]\right)^2\
\end{align*}

The first term of the last display goes to zero by Lemma \ref{L:FilterConvergence3}. For the second term we have
\begin{align}
&\mathbb E_\theta^*\left(\phi_t^{\delta,\theta}[\bar f_\theta]-\bar\phi_t^{\delta,\theta}[\bar f_\theta]\right)^2\nonumber\\
&= \mathbb E_\theta^*\left[\mathbb E_\theta^*\left[\bar f_\theta(U_t^\delta)Z_t^{\delta,\theta}-\bar f_\theta(\bar U_t)\bar Z_t^{\delta,\theta}\Big|\mathcal Y_t^\delta\right]^2\right]\nonumber\\
&= \mathbb E_\theta^*\left[\mathbb E_\theta^*\left[(\bar f_\theta(U_t^\delta) Z_t^{\delta,\theta}-\bar f_\theta(\bar U_t) \bar Z_t^{\delta,\theta})(\bar f_\theta(\tilde U_t^\delta)\tilde Z_t^{\delta,\theta}-\bar f_\theta(\tilde{\bar U}_t)\tilde{\bar Z}_t^{\delta,\theta})\Big|\mathcal Y_t^\delta\right]\right]\nonumber\\
&= \mathbb E_\theta^*\left[\mathbb E_\theta^*\left[(\bar f_\theta(U_t^\delta) Z_t^{\delta,\theta}-\bar f_\theta(\bar U_t) \bar Z_t^{\delta,\theta})(\bar f_\theta(\tilde U_t^\delta)\tilde Z_t^{\delta,\theta}-\bar f_\theta(\tilde{\bar U}_t)\tilde{\bar Z}_t^{\delta,\theta})\Big|\mathcal F_t^{\delta,U,\tilde U,X,\tilde X}\right]\right]\nonumber\\
&= \mathbb E_\theta^*\left[\bar f_\theta(\tilde U_t^\delta)\left[\bar f_\theta(U_t^\delta)\exp\left(\int_0^t h(\tilde X_s^\delta,\tilde U_s^\delta)h(X_s^\delta, U_s^\delta)ds\right)-\bar f_\theta(\bar U_t)\exp\left(\int_0^th(\tilde X_s^\delta, \tilde U_s^\delta) \bar h_\theta(\bar U_s)ds\right)\right]\right]\nonumber\\
\nonumber
&\hspace{1cm}+\mathbb E_\theta^*\left[\bar f_\theta(\tilde {\bar U}_t)\left[\bar f_\theta(\bar U_t)\exp\left(\int_0^t\bar h_\theta(\bar U_s)\bar h_\theta(\bar{\widetilde{U}}_s)ds\right)-\bar f_\theta(U_t^\delta)\exp\left(\int_0^th(X_s^\delta, U_s^\delta) \bar h_\theta(\tilde{\bar U}_s)ds\right)\right]\right]\nonumber\\
&\rightarrow 0, \text{ as }\delta\downarrow 0\ .\label{Eq:AveragePhiConvergence}
\end{align}

The convergence in the last term is due to  Lemma \ref{L:NeededErgodicResult} proceeding as in the proof of Lemma \ref{L:FilterConvergence3}. Then, we have convergence in probability, $\mathbb P_\theta^*(|\phi_t^{\delta,\theta}[f]-\bar\phi_t^{\delta,\theta}[f]|\geq\epsilon)\leq\frac{1}{\eps^2}\mathbb E_\theta^*\left(\phi_t^{\delta,\theta}[f]-\bar\phi_t^{\delta,\theta}[f]\right)^2\rightarrow 0$ as $\delta\rightarrow 0$ for all $\epsilon>0$.

Next, we notice that for $r_{1},r_{2}>1$ such that $1/r_{1}+1/r_{2}=1$ and $p r_{2}\leq 2$
\begin{align}
&\mathbb E^{*}_{\theta}\left|\phi_t^{\delta,\theta}[f]/\phi_t^{\delta,\theta}[1]-\bar\phi_t^{\delta,\theta}[f]/\bar\phi_t^{\delta,\theta}[1]\right|^{p}=
\mathbb E^{*}_{\theta}\left|\frac{\phi_t^{\delta,\theta}[f]\bar\phi_t^{\delta,\theta}[1]-\bar\phi_t^{\delta,\theta}[f]\phi_t^{\delta,\theta}[1]}{\phi_t^{\delta,\theta}[1]\bar\phi_t^{\delta,\theta}[1]}\right|^{p}\nonumber\\
&\leq C \left(\mathbb E^{*}_{\theta}\left|\frac{1}{\phi_t^{\delta,\theta}[1]\bar\phi_t^{\delta,\theta}[1]}\right|^{pr_{1}}\right)^{1/r_{1}}
\left(\mathbb E^{*}_{\theta}\left|\phi_t^{\delta,\theta}[f]\bar\phi_t^{\delta,\theta}[1]-\bar\phi_t^{\delta,\theta}[f]\phi_t^{\delta,\theta}[1]\right|^{pr_{2}}\right)^{1/r_{2}}\nonumber\\
&\leq C \left(\mathbb E^{*}_{\theta}\left|\frac{1}{\phi_t^{\delta,\theta}[1]\bar\phi_t^{\delta,\theta}[1]}\right|^{pr_{1}}\right)^{1/r_{1}}
\left(\mathbb E^{*}_{\theta}\left|\phi_t^{\delta,\theta}[f]-\bar\phi_t^{\delta,\theta}[f]\right|^{pr_{2}}+\mathbb E^{*}_{\theta}\left|\phi_t^{\delta,\theta}[1]-\bar\phi_t^{\delta,\theta}[1]\right|^{pr_{2}}\right)^{1/r_{2}}\nonumber\\
&\leq C \left(\mathbb E^{*}_{\theta}\left|\frac{1}{\phi_t^{\delta,\theta}[1]}\right|^{2pr_{1}}+\mathbb E^{*}_{\theta}\left|\frac{1}{\bar\phi_t^{\delta,\theta}[1]}\right|^{2pr_{1}}\right)^{1/r_{1}}
\left(\mathbb E^{*}_{\theta}\left|\phi_t^{\delta,\theta}[f]-\bar\phi_t^{\delta,\theta}[f]\right|^{pr_{2}}+\mathbb E^{*}_{\theta}\left|\phi_t^{\delta,\theta}[1]-\bar\phi_t^{\delta,\theta}[1]\right|^{pr_{2}}\right)^{1/r_{2}}\nonumber
\end{align}
where boundedness of $f$ was used. By combining Lemma \ref{L:FilterConvergence3} and (\ref{Eq:AveragePhiConvergence})  we get that
\begin{align}
&\mathbb E^{*}_{\theta}\left|\phi_t^{\delta,\theta}[f]-\bar\phi_t^{\delta,\theta}[f]\right|^{pr_{2}}+\mathbb E^{*}_{\theta}\left|\phi_t^{\delta,\theta}[1]-\bar\phi_t^{\delta,\theta}[1]\right|^{pr_{2}}\rightarrow 0
\end{align}
In addition, we have
\begin{align}
\mathbb E^{*}_{\theta}\left|\frac{1}{\phi_t^{\delta,\theta}[1]}\right|^{2pr_{1}}&\leq
\mathbb E^{*}_{\theta}\left(Z_t^{\delta,\theta}\right)^{-2pr_{1}}=\mathbb E^{*}_{\theta}\left[E^{*}_{\theta}\left[\left(Z_t^{\delta,\theta}\right)^{-2pr_{1}}\right]|\mathcal F_t^{ U^{\delta}, X^{\delta}}\right]\nonumber\\
&=\mathbb E^{*}_{\theta}\left[e^{(2p^{2}r^{2}_{1}+pr_{1})\int_{0}^{t}|h_{\theta}(X^{\delta}_{s},U^{\delta}_{s})|^{2}ds}\right]\nonumber\\
&<\infty.
\end{align}
Similarly, we can also obtain $\mathbb E^{*}_{\theta}\left|\frac{1}{\bar\phi_t^{\delta,\theta}[1]}\right|^{2pr_{1}}<\infty$. Putting these  statements together we obtain that
\begin{align}
\lim_{\delta\downarrow 0}\mathbb E^{*}_{\theta}\left|\phi_t^{\delta,\theta}[f]/\phi_t^{\delta,\theta}[1]-\bar\phi_t^{\delta,\theta}[f]/\bar\phi_t^{\delta,\theta}[1]\right|^{p}&=0.\label{Eq:RatioPhis}
\end{align}

Then, by Cauchy-Schwartz inequality, we have
\begin{align}
&\mathbb E_{\alpha}\left|\pi_t^{\delta,\theta}[f]-\bar\pi_t^{\delta,\theta}[f]\right|=\mathbb E_{\alpha}\left|\phi_t^{\delta,\theta}[f]/\phi_t^{\delta,\theta}[1]-\bar\phi_t^{\delta,\theta}[f]/\bar\phi_t^{\delta,\theta}[1]\right|\nonumber\\
&\leq
 \left(\mathbb E^{*}_{\alpha}\left|Z^{\delta,\alpha}_{t}\right|^{q}\right)^{1/q}\left(\mathbb E^{*}_{\alpha}\left|\phi_t^{\delta,\theta}[f]/\phi_t^{\delta,\theta}[1]-\bar\phi_t^{\delta,\theta}[f]/\bar\phi_t^{\delta,\theta}[1]\right|^{p}\right)^{1/p}\nonumber\\
 &\leq
 \left(\sup_{\delta\in(0,1)}\left(\mathbb E^{*}_{\alpha}\left|Z^{\delta,\alpha}_{t}\right|^{q}\right)^{1/q}\right)\left(\mathbb E^{*}_{\theta}\left|\phi_t^{\delta,\theta}[f]/\phi_t^{\delta,\theta}[1]-\bar\phi_t^{\delta,\theta}[f]/\bar\phi_t^{\delta,\theta}[1]\right|^{p}\right)^{1/p}\nonumber\\
 &\rightarrow 0\ ,\nonumber
\end{align}
which goes to zero as $\delta\downarrow 0$ by Lemma \ref{L:BoundedZ} and by (\ref{Eq:RatioPhis}). 
The third line,  i.e.,
\[\mathbb E_{\alpha}^*\left|\phi_t^{\delta,\theta}[f]/\phi_t^{\delta,\theta}[1]-\bar\phi_t^{\delta,\theta}[f]/\bar\phi_t^{\delta,\theta}[1]\right|^{p}=\mathbb E_{\theta}^*\left|\phi_t^{\delta,\theta}[f]/\phi_t^{\delta,\theta}[1]-\bar\phi_t^{\delta,\theta}[f]/\bar\phi_t^{\delta,\theta}[1]\right|^{p}\ ,\]
follows because both $\phi_t^{\delta,\theta}$ and $\bar\phi_t^{\delta,\theta}$ are functionals of $Y^\delta$ (and no other random variable), and $Y^\delta$ is a Brownian motion under both measures $\mathbb P_{\alpha}^*$ and $\mathbb{P}_{\theta}^*$.

\end{proof}
Before moving on we should clarify the last step in the proof to Lemma \ref{L:FilterConvergence5}. It should be made clear that $Y^\delta$ is only observed to be $\mathbb P_\alpha^*$ Brownian motion when $\alpha\in\Theta$ denotes the parameter value under the measure $\mathbb P_\alpha$. In the proof we are always handling $Y^\delta$ underneath an unconditional expectation operator $\mathbb E_\theta^*$, which is actually an expectation conditional on the ground truth of the true parameter value being $\theta$. This notation can be made more explicitly in the following way: for any function $f$ of the path $Y^\delta$,
\[\mathbb E_\theta^* f(Y^\delta) =\mathbb E^*[f(Y^\delta)|\hbox{true parameter value}=\theta]= \mathbb Ef(W)\qquad\forall \theta\in\Theta\ ,\]
where $W$ is a Brownian motion. Hence, we are able to say
\[\mathbb E_\alpha^* f(Y^\delta)  = \mathbb E_\theta^* f(Y^\delta) \qquad\forall \alpha, \theta\in\Theta\ .\]

Now the proof of Theorem \ref{T:FilterConvergence1} follows:

\begin{proof}[Proof of Theorem \ref{T:FilterConvergence1}]
Let us prove just the second part of the theorem because the first part follows from a Chebyshev inequality and Lemma \ref{L:FilterConvergence3}. We prove it first for $f\in C_{b}(\mathcal{X}\times\mathcal{U})$. Then, we prove under the assumption that there exists $\eta>0$ such that $f\in\mathcal{A}_{\eta}^{\theta}$. So, let us assume that $f\in C_{b}(\mathcal{X}\times\mathcal{U})$. We start the proof by proving first that
\[\lim_{\delta\downarrow 0}\mathbb E_{\alpha}\left(\pi_t^{\delta,\theta}[f]-\bar \pi_t^{\delta, \theta}[f]\right)^2=0\ . \]
Letting $\bar f_\theta(u) = \int f(x,u)\mu_\theta(x,u)dx$, Lemma \ref{L:FilterConvergence3} implies convergence in $\mathbb P^*$ probability,
\[\mathbb P_{\theta}^*\left(\left|\phi_t^{\delta,\theta}[f]-\phi_t^{\delta,\theta}[\bar f_\theta]\right|>\eps\right)\leq \frac{1}{\eps}\mathbb E_{\theta}^*\left|\phi_t^{\delta,\theta}[f]-\phi_t^{\delta,\theta}[\bar f_\theta]\right|\rightarrow 0\qquad\forall\eps>0\ ,\]
Now consider any $f\in C_{b}(\mathcal{X}\times\mathcal{U})$, and again using a Cauchy-Schwartz inequality,
\begin{align}
\nonumber
&\lim_\delta\mathbb E_{\alpha}\left|\pi_t^{\delta,\theta}[f]-\bar{\pi}_t^{\delta,\theta}[f]\right|\\
\nonumber
&=\lim_\delta\mathbb E_{\alpha}\left|\pi_t^{\delta,\theta}[f]-\bar{\pi}_t^{\delta,\theta}[\bar f_\theta]\right|\\
\nonumber
&\leq \lim_\delta\mathbb E_{\alpha}\left|\pi_t^{\delta,\theta}[f]-\pi_t^{\delta,\theta}[\bar f_\theta]\right|+\cancelto{0}{\mathbb E_{\alpha}\left|\pi_t^{\delta,\theta}[\bar f_\theta]-\bar{\pi}_t^{\delta,\theta}[\bar f_\theta]\right|}\qquad\hbox{(by Lemma \ref{L:FilterConvergence5})}\\
\nonumber
&=\lim_\delta\mathbb E_{\alpha}^*Z_t^{\delta,\alpha}\left|\pi_t^{\delta,\theta}[f]-\pi_t^{\delta,\theta}[\bar f_\theta]\right|\\
\nonumber
&\leq \lim_\delta\left(\mathbb E_{\alpha}^*\left(Z_t^{\delta,\alpha}\right)^q\right)^{1/q}\left(\mathbb E_\alpha^*\left|\pi_t^{\delta,\theta}[f]-\pi_t^{\delta,\theta}[\bar f_\theta]\right|^p\right)^{1/p}\\
\nonumber
\nonumber
&\leq\underbrace{\sup_{\delta\in(0,1)}\left(\mathbb E_{\alpha}^*\left(Z_t^{\delta,\alpha}\right)^q\right)^{1/q}}_{<\infty}\lim_\delta\left(\mathbb E_\theta^*\left|\pi_t^{\delta,\theta}[f]-\pi_t^{\delta,\theta}[\bar f_\theta]\right|^p\right)^{1/p}\\
\label{eq:diff_converge}
&=0\ ,
\end{align}
where finiteness of $\sup_{\delta\in(0,1} \mathbb E_{\alpha}^*\left(Z_t^{\delta,\alpha}\right)^q$ follows from Lemma \ref{L:BoundedZ}, and $\lim_\delta\mathbb E_\theta^*\left|\pi_t^{\delta,\theta}[f]-\pi_t^{\delta,\theta}[\bar f_\theta]\right|^p=\lim_\delta\mathbb E_\theta^*\left|\phi_t^{\delta,\theta}[f]/\phi_t^{\delta,\theta}[1]-\phi_t^{\delta,\theta}[\bar f_\theta]/\phi_t^{\delta,\theta}[1]\right|^p=0$ follows as (\ref{Eq:RatioPhis}). 
This proves convergence in $L^1$, and convergence in $L^2$ follows from dominated convergence because the test function $f$ was assumed bounded so that
$\left|\pi_t^{\delta,\theta}[f]-\bar{\pi}_t^{\delta,\theta}[f]\right|^2\leq 2 \|f\|_\infty^2$. This completes the proof for $f\in C_{b}(\mathcal{X}\times\mathcal{U})$.

Let us complete the proof  by assuming that there exists an $\eta>0$ such that $f\in\mathcal{A}_{\eta}^\theta$. For $n\in\mathbb{N}$,   define
\[
u_{n}(x)=
\begin{cases}
x &,|x|\leq n\\
n \textrm{ sign}(x)&,|x|>n
\end{cases}
\]
and set $f_{n}(x)=u_{n}(f(x))$. Analogously define
\[
\pi^{\delta,\theta}_{t}[f_{n}]\doteq\mathbb E_{\theta}\left[f_{n}(X_t^\delta,U_t^\delta)\Big|\mathcal Y_t^\delta\right],\quad \bar{f}_{n,\theta}(u)=\int_{\mathcal{X}}f_{n}(x,u)\mu_{\theta}(x,u)dx.
\]
Since $f_{n}$ is bounded, we already know that $\lim_{\delta\downarrow 0}\mathbb E_{\alpha}\left(\pi_t^{\delta,\theta}[f_{n}]-\bar \pi_t^{\theta}[f_{n}]\right)^2=0$.
So, it is enough to prove that
\[
\lim_{n\rightarrow\infty}\limsup_{\delta\downarrow 0}\mathbb E_{\alpha}\left(\pi_t^{\delta,\theta}[f]-\pi_t^{\delta,\theta}[f_{n}]\right)^2=0
\]
and
\[
\lim_{n\rightarrow\infty}\limsup_{\delta\downarrow 0}\mathbb E_{\alpha}\left(\bar \pi_{t}^{\delta,\theta}[f]-\bar \pi_t^{\delta,\theta}[f_{n}]\right)^2=0.
\]
Both of these statements follow from the observation: for $\eta>0$ such that $f\in\mathcal{A}_{\eta}^\theta$ we have
\begin{align*}
|f(x,u)-f_{n}(x,u)|^{2+\eta/2}&\leq2^{2+\eta/2}|f(x,u)|^{2+\eta/2}\indicator{|f(x,u)|>n}\\
&\leq 2^{2+\eta}n^{-\eta/2}|f(x,u)|^{2+\eta}\ ,
\end{align*}
and in particular, letting $C=2^{2+\eta}$, $p=\sqrt{(2+\eta)/4}$ so that $2p^2 = 2+\eta/2$ and $\tfrac1q =1 - \sqrt{\tfrac{4}{2+\eta}}$, then taking the following similar set of steps as in equation \eqref{eq:diff_converge} we have
\begin{align}
\nonumber
&\lim_{n\rightarrow\infty}\limsup_{\delta\downarrow 0}\mathbb E_{\alpha}\left|\pi_t^{\delta,\theta}[f]-\pi_t^{\delta,\theta}[f_{n}]\right|^2\\
\nonumber
&\leq\lim_{n\rightarrow\infty}\limsup_{\delta\downarrow 0}\mathbb E_{\alpha}\mathbb E_\theta\left[\left|f(X_t^\delta,U_t^\delta)-f_n(X_t^\delta,U_t^\delta)\right|^2\Big|\mathcal Y_t^\delta\right]\\
\nonumber
&\leq\lim_{n\rightarrow\infty}\limsup_{\delta\downarrow 0}\mathbb E_{\alpha}^*Z_t^{\delta,\alpha}\mathbb E_\theta\left[\left|f(X_t^\delta,U_t^\delta)-f_n(X_t^\delta,U_t^\delta)\right|^2\Big|\mathcal Y_t^\delta\right]\\
\nonumber
&\leq\lim_{n\rightarrow\infty}\limsup_{\delta\downarrow 0}\left(\mathbb E_{\alpha}^*(Z_t^{\delta,\alpha})^q\right)^{1/q}\left(\mathbb E_\alpha^*\mathbb E_\theta\left[\left|f(X_t^\delta,U_t^\delta)-f_n(X_t^\delta,U_t^\delta)\right|^2\Big|\mathcal Y_t^\delta\right]^p\right)^{1/p}\\
\nonumber
&\leq\lim_{n\rightarrow\infty}\limsup_{\delta\downarrow 0}\left(\mathbb E_{\alpha}^*(Z_t^{\delta,\alpha})^q\right)^{1/q}\left(\mathbb E_\alpha^*\mathbb E_\theta\left[\left|f(X_t^\delta,U_t^\delta)-f_n(X_t^\delta,U_t^\delta)\right|^{2p}\Big|\mathcal Y_t^\delta\right]\right)^{1/p}\\
\nonumber
&=\lim_{n\rightarrow\infty}\limsup_{\delta\downarrow 0}\left(\mathbb E_{\alpha}^*(Z_t^{\delta,\alpha})^q\right)^{1/q}\left(\mathbb E_\theta^*\mathbb E_\theta\left[\left|f(X_t^\delta,U_t^\delta)-f_n(X_t^\delta,U_t^\delta)\right|^{2p}\Big|\mathcal Y_t^\delta\right]\right)^{1/p}\\
\nonumber
&=\lim_{n\rightarrow\infty}\limsup_{\delta\downarrow 0}\left(\mathbb E_{\alpha}^*(Z_t^{\delta,\alpha})^q\right)^{1/q}\left(\mathbb E_\theta\left[ (Z_t^{\delta,\theta})^{-1}\mathbb E_\theta\left[\left|f(X_t^\delta,U_t^\delta)-f_n(X_t^\delta,U_t^\delta)\right|^{2p}\Big|\mathcal Y_t^\delta\right]\right]\right)^{1/p}\\
&\leq C \lim_{n\rightarrow\infty}\limsup_{\delta\downarrow 0}n^{-\eta/2p^2}\left(\mathbb E_{\alpha}^*\left(Z_t^{\delta,\alpha}\right)^q\right)^{1/q}\left(\mathbb E_\theta (Z_t^{\delta,\theta})^{-q}\right)^{1/q}\left( \mathbb  E_{\theta}\left|f(X^{\delta}_{t},U_t^\delta)\right|^{2+\eta}\right)^{1/p^2}\nonumber\\
\nonumber
&=0\ .
\end{align}

The equality in the fifth line above, i.e.,
\[
\mathbb E_\alpha^*\mathbb E_\theta\left[\left|f(X_t^\delta,U_t^\delta)-f_n(X_t^\delta,U_t^\delta)\right|^{2p}\Big|\mathcal Y_t^\delta\right]=\mathbb E_\theta^*\mathbb E_\theta\left[\left|f(X_t^\delta,U_t^\delta)-f_n(X_t^\delta,U_t^\delta)\right|^{2p}\Big|\mathcal Y_t^\delta\right]
\]
follows because $\mathbb E_\theta\left[\left|f(X_t^\delta,U_t^\delta)-f_n(X_t^\delta,U_t^\delta)\right|^{2p}\Big|\mathcal Y_t^\delta\right]$ is a functional of $Y^\delta$ (and no other random variable), and $Y^\delta$ is a Brownian motion under both measures $\mathbb P_{\alpha}^*$ and $\mathbb{P}_{\theta}^*$.

The same limit can be shown for $\lim_{n\rightarrow\infty}\limsup_{\delta\downarrow 0}\mathbb E_{\alpha}\left(\bar \pi_{t}^{\delta,\theta}[f]-\bar \pi_t^{\delta,\theta}[f_{n}]\right)^2$, but with $\bar Z_t^{\delta,\theta}$ and $\bar Z_t^{\delta,\alpha}$ used instead. Due to ergodicity, the proof of
\[
\lim_{\delta\downarrow 0}\mathbb E_{\alpha}\left|\bar \pi_t^{\delta,\theta}[f]-\bar \pi_t^{\theta}[f]\right|=0\ ,
\]
follows similarly and thus omitted.  This concludes the proof of the theorem.
\end{proof}

\bibliographystyle{apalike}
\small{}

\end{document}